\font\pri=eufm10 at 11pt
\def\ppr#1{\hbox{\pri#1}}
\newcommand{\Z}{{\mathbb{Z}}}
\newcommand{\Q}{{\mathbb{Q}}}
\newcommand{\R}{{\mathbb{R}}}
\newcommand{\C}{{\mathbb{C}}}
\newcommand{\K}{{\mathbb{K}}}
\newcommand{\F}{{\mathbb{F}}}
\newtheorem{theo}{Theorem}
\newtheorem{pr}{Proposition}
\newtheorem{co}{Corollary}
\newtheorem{lm}{Lemma}
\newtheorem{re}{Remark}
\newtheorem{de}{Definition}
\def\GL{\mathop{\text{\rm GL}}}
\def\PGL{\mathop{\text{\rm PGL}}}
\def\pgl{\mathop{\text{\rm pgl}}}
\def\gl{\mathop{\ppr{gl}}}
\def\sl{\mathop{\ppr{sl}}}
\def\O{\mathop{\text{\rm O}}}
\def\o{\mathop{\ppr{o}}}
\def\diag{\mathop{\hbox{diag}}}
\def\iu{\mathbbm{i}} 
\def\a{\alpha}
\def\b{\beta}
\def\alg{\mathop{\text{\rm\bf alg}}}
\def\lie{\mathop{\text{\rm\bf Lie}}}
\font\got=eufb10
\font\gots=eufb7
\def\r{\text{\got{r}}}
\def\im{\text{\got{i}}}
\def\j{\text{\got{j}}}
\def\L{\text{\got{L}}}
\def\m{\text{\got{m}}}
\def\ms{\text{\gots{m}}}
\def\Z{\mathbb Z}
\def\p{\mathfrak{p}}
\def\aut{\mathop{\text{\rm Aut}}}
\def\inn{\mathop{\text{\rm Inn}}}
\def\der{\mathop{\text{\rm Der}}}
\def\z{\mathop{\text{Z}}}
\def\ad{\mathop{\text{ad}}}
\def\Ad{\mathop{\text{Ad}}}
\def\Max{\mathop{\text{Max}}}
\def\rad{\mathop{\text{rad}}}
\def\ann{\mathop{\text{Ann}}}
\def\ia{\mathfrak{a}}
\def\ib{\mathfrak{b}}
\def\ic{\mathfrak{c}}
\def\id{\mathfrak{d}}
\def\grp{\text{\bf Grp}}
\def\affpgl{\mathop{\bf PGL}}
\def\affgl{\mathop{\bf GL}}
\def\affgm{\mathop{\bf G_m}}
\def\affaut{\mathop{\bf Aut}}
\def\affAd{\mathop{\bf Ad}}
\title[Lorentz and Poincar\'e algebras]{Algebraic structure of the Lorentz and of the Poincar\'e Lie algebras} 
\author[P. Alberca]{Pablo Alberca Bjerregaard}
\address{P. Alberca Bjerregaard: Departamento de Matem\'atica Aplicada, Escuela T\'ecnica Superior de Ingenieros Industriales, 
Universidad de M\'alaga. 29071 M\'alaga. Spain.}
\email{pgalberca@uma.es}
\author[D. Mart\'{\i}n]{Dolores Mart\'{\i}n Barquero}
\address{D. Mart\'{\i}n Barquero: Departamento de Matem\'atica Aplicada, Escuela T\'ecnica Superior de Ingenieros Industriales, 
Universidad de M\'alaga. 29071 M\'alaga. Spain.}
\email{dmartin@uma.es}
\author[C. Mart\'{\i}n]{C\'andido Mart\'{\i}n Gonz\'alez}
\address{C. Mart\'{\i}n Gonz\'alez:  Departamento de \'Algebra Geometr\'{\i}a y Topolog\'{\i}a, Fa\-cultad de Ciencias, 
Universidad de M\'alaga, Campus de Teatinos s/n. 29071 M\'alaga. Spain.}
\email{candido@apncs.cie.uma.es}
\author[D. Ndoye]{Daouda Ndoye}
\address{D. Ndoye: D\'epartement d'Alg\`ebre, de G\'eomtrie et Application, Universit\'e Cheikh Anta Diop de Dakar, AIMS-S\'en\'egal, B.P 1418 Mbour-S\'en\'egal.}
\email{daouda87@hotmail.fr}
\subjclass[2010]{Primary 17B45; Secondary 17B20, 17B40} 
\begin{document}
\maketitle
\begin{abstract}
We start with the Lorentz algebra $\L=\o_{\R}(1,3)$ over the reals and find a suitable basis $B$ relative to which the structure constants are integers.
Thus we consider the $\Z$-algebra $\L_{\Z}$ which is free as a $\Z$-module and its $\Z$-basis is $B$. This allows us to define the Lorentz type algebra
$\L_K:=\L_{\Z}\otimes_{\Z} K$ over any field $K$. In a similar way, we consider Poincar\'e type algebras over any field $K$.

In this paper we study the ideal structure of Lorentz and of Poincar\'e type algebras over different fields.
It turns out that Lorentz type algebras are simple if and only if the ground field has no square root of $-1$. 
Thus, they are simple over the reals but not over the complex. 
Also, if the ground field is of characteristic $2$ then Lorentz and Poincar\'e type algebras are neither simple nor semisimple. 
We extend the study of simplicity of the Lorentz algebra to the case of a ring of scalars where we have to use the notion
of $\m$-simplicity (relative to a maximal ideal $\m$ of the ground ring of scalars).

The Lorentz type algebras over a finite field $\F_q$ where $q=p^n$ and $p$ is odd are simple 
 if and only if $n$ is odd and $p$ of the form $p=4k+3$. In case $p=2$ then the Lorentz type algebra are not simple.
 Once we know the ideal structure of the algebras, we get some information of their automorphism groups.
 For the Lorentz type algebras (except in the case of characteristic $2$) we describe the affine group scheme of automorphisms and 
 the derivation algebras. For the Poincar\'e algebras we restrict this program to the case of an algebraically closed field of
 characteristic other than $2$.
\end{abstract}

\section{Introduction and preliminary definitions}

\subsection{Category language} 
All through this paper $\Phi$ will denote an associative, commutative ring with unit and $\alg_\Phi$ the category whose objects are 
the associative, commutative and unital $\Phi$-algebras. On the other hand, $\lie_\Phi$ will denote the category of Lie $\Phi$-algebras.
We will have the ocassion to deal with (covariant) functors ${\mathcal F}\colon \alg_\Phi\to\lie_\Phi$. These functors will be called Lie algebra functors since
they take values in $\lie_\Phi$. Given two Lie algebra functors ${\mathcal F}, {\mathcal G}\colon \alg_\Phi\to\lie_\Phi$ a homomorphism $\eta\colon{\mathcal F}\to{\mathcal G}$
is a natural transformation from $\mathcal F$ to $\mathcal G$, that is, a family $\{\eta_R\}$ where:
\begin{enumerate}
 \item $R$ ranges in the class of objects of $\alg_\Phi$,
 \item $\eta_R\colon {\mathcal F}(R)\to{\mathcal G}(R)$ is a homomorphism of Lie $\Phi$-algebras.
 \item For any two objects $R$ and $S$ in $\alg_\Phi$ and any homomorphism of $\Phi$-algebras $\alpha\colon R\to S$, the following squares commute:
 \[
\xygraph{
!{<0cm,0cm>;<1cm,0cm>:<0cm,1cm>::}
!{(0,0)}*+{{\mathcal F}(R)}="p"
!{(1.5,0)}*+{{\mathcal G}(R)}="i"
!{(0,-1.2)}*+{{\mathcal F}(S)}="j"
!{(1.5,-1.2)}*+{{\mathcal G}(S).}="r"
"p":^{\eta_R}"i"
"p":_{{\mathcal F}(\alpha)}"j"
"i":^{{\mathcal G}(\alpha)}"r"
"j":_{\eta_S}"r"
}
\]\end{enumerate}
We will say that $\mathcal F$ is isomorphic to ${\mathcal G}$ if all the $\eta_R$ are isomorphisms (in this case we will use any of the notations
$\eta\colon {\mathcal F}\cong{\mathcal G}$, ${\mathcal F}\buildrel{\eta}\over{\cong}{\mathcal G}$ or
${\mathcal F}\cong{\mathcal G}$).\medskip

\begin{de}
Consider next the full subcategory $\sqrt{-1}_\Phi$ of $\alg_\Phi$ whose objects are the $\Phi$-algebras $R$ such that $\sqrt{-1}\in R$. 
Denote by ${\mathcal I}$ the inclusion functor ${\mathcal I}\colon\sqrt{-1}_\Phi\to\alg_\Phi$. 
\end{de} 

\subsection{The Lorentz functor}\label{llacer} 
The  Lorentz algebra over the reals, denoted by $\o(1,3)$, is the Lie algebra of the orthogonal Lie group $\O(1,3)$:
$$\o(1,3)=\text{Lie}(O(1,3))=\{M\in \gl\nolimits_4(\R)\colon MI_{13}+I_{13}M^t=0\},$$
where $M^t$ denotes matrix transposition of $M$ and $I_{13}=\diag(-1,1,1,1)$ (some authors take $I_{13}=\diag(1,1,1,-1)$ which is equivalent). A 
straightforward computation reveals that a generic element of $\o(1,3)$ is of the form
$$\left(
\begin{array}{cccc}
 0 & x_1 & x_2 & x_3 \\
 x_1 & 0 & x_4 & x_5 \\
 x_2 & -x_4 & 0 & x_6 \\
 x_3 & -x_5 & -x_6 & 0
\end{array}
\right)$$
and then denoting by $e_{ij}$ the elementary matrix with $1$ in the entry $(i,j)$ and $0$ elsewhere we have a basis
of $\o(1,3)$ given by
$B=\{s_{12},s_{13},s_{14},a_{23},a_{24},a_{34}\}$ where $s_{ij}:=e_{ij}+e_{ji}$ and $a_{ij}=e_{ij}-e_{ji}$.
\begin{figure}[h]
\begin{tabular}{|c|cccccc|}
\hline
$[\ ,\ ]$ & $s_{12}$ & $s_{13}$ & $s_{14}$ & $a_{23}$ & $a_{24}$ & $a_{34}$\\ 
\hline
$s_{12}$ & $0$ & $a_{2,3}$ & $a_{2,4}$ & $s_{1,3}$ & $s_{1,4}$ & $0$ \\
$s_{13}$& $-a_{23}$ & $0$ & $a_{34}$ & $-s_{12}$ & $0$ & $s_{14}$
   \\
$s_{14}$& $-a_{24}$ & $-a_{34}$ & $0$ & $0$ & $-s_{12}$ &
   $-s_{13}$ \\
$a_{23}$& $-s_{13}$ & $s_{12}$ & $0$ & $0$ & $-a_{34}$ & $a_{24}$
   \\
$a_{24}$& $-s_{14}$ & $0$ & $s_{12}$ & $a_{34}$ & $0$ & $-a_{23}$
   \\
$a_{34}$& $0$ & $-s_{14}$ & $s_{13}$ & $-a_{24}$ & $a_{23}$ & $0$\\
\hline
 \end{tabular}
 \caption{Multiplication table of $\o(1,3)$.}\label{ttaudi}
\end{figure}
Relative to this basis the structure constants are $0$, $1$ or $-1$. Thus we can construct the $\Z$-algebra
$\L_\Z:=\Z s_{12}\oplus \Z s_{13}\oplus \Z s_{14}\oplus \Z a_{23}\oplus \Z a_{24}\oplus \Z a_{34}$ whose multiplication
table is given in Figure \ref{ttaudi}. Fix now an associative, commutative and unital ring $\Phi$ and consider the category
$\alg_\Phi$ defined above.

Then for any object $R$ in $\alg_\Phi$ we may define the Lorentz type algebra 
$\L_R:=\L_\Z\otimes_\Z R$.
This is nothing but the free $R$-module with basis $s_{12}$, $s_{13}$, $s_{14}$, $a_{23}$, $a_{24}$ and $a_{34}$,
enriched with an $R$-algebra structure by the multiplication table as in Figure \ref{ttaudi}. As a free $R$-module we have
$$\dim \L_R=6.$$
Of course if we take $R=\R$ then $\L_R\cong\o(1,3)$, the Lorentz algebra. If $R=\C$ then $\L_R$ is the complexified Lorentz algebra.
  If $R$ and $S$ are objects in $\alg_\Phi$ and $f\colon R\to S$ a $\Phi$-algebras homomorphism,
then we may define a Lie $\Phi$-algebras homomorphism $\L_f\colon \L_R\to \L_S$ in an obvious way. 
Thus we have defined a covariant functor $\L\colon\alg_\Phi\to \lie_\Phi$ (where 
$\lie_\Phi$ is the category of Lie $\Phi$-algebras).
\medskip

Let $\O(n)$ be the orthogonal Lie group over the reals: the group of all matrices $M$ in $\GL_n(\R)$ such that $MM^t=1_n$.
Then, its Lie algebra $\o(n)$ consists of all matrices $M$ in $\gl_n(\R)$ such that $M+M^t=0$. This is generated (as a vector space) by the matrices
$e_{ij}-e_{ji}$ where $i<j$ with $i,j\in\{1,\ldots,n\}$ and the structure constants relative to the basis of these elements are again $0$ or $\pm 1$.
Thus, we can consider as before the $\Z$-algebra $\o(n;\Z):=\oplus_{i<j}\Z(e_{ij}-e_{ji})$.
Fix as before a ring $\Phi$ and then, for any algebra $R$ in $\alg_\Phi$ we may define the scalar extension 
$\o(n;R):=\o(n;\Z)\otimes_{\Z} R$. So, this is the Lie $R$-algebra with basis $e_{ij}-e_{ji}$ as before and
multiplication table as the one for $\o(n)$ in the corresponding basis. 

Thus we have $\dim_R(\o(n;R))=n(n-1)/2$ and 
we have again a functor $$\o(n)\colon\alg\nolimits_\Phi\to\lie\nolimits_\Phi$$ such that $R\mapsto\o(n;R)$. If $f\colon R\to S$ is a homomorphism
of algebras in $\alg_\Phi$ then we will denote by $\o(n;f)\colon\o(n;R)\to\o(n;S)$ the homomorphism of Lie algebras $\o(n;f):=1\otimes f$. Along this work, the alternative notation $\o_n(R)$ (meaning $\o(n;R)$) will be used eventually. 
\begin{re}\rm
 If $\Phi$ is a ring agreeing with its $2$-torsion, that is, $1+1=0$, then for any $\Phi$-algebra $R$
 in $\alg_\Phi$, the Lie algebra $\o(4;R)$ agrees with the Lorentz type Lie algebra $\L_R$. In particular
 this is the case for a field $\K$ of characteristic two: $\L_\K=\o(4;\K)$. A more general result is the following.
 \end{re}
 
\begin{lm}\label{one}
For any $\Phi$, the functors $\L\circ {\mathcal I}$ and $\o(4)\circ{\mathcal I}:\sqrt{-1}_\Phi\to\lie_\Phi$ are isomorphic. 
More precisely (i) for any algebra $R$ in $\alg_\Phi$ such that the equation $x^2+1=0$ has a solution in $R$, 
there is an isomorphism $\eta_R\colon \L_R\cong\o(4;R)$; (ii) If $f\colon R\to S$ is a homomorphism of $\Phi$-algebras and $\sqrt{-1}\in R$,
the following diagram commutes:
 \[
\xygraph{
!{<0cm,0cm>;<1cm,0cm>:<0cm,1cm>::}
!{(0,0)}*+{\L_R}="p"
!{(1.5,0)}*+{\o(4;R)}="i"
!{(0,-1.2)}*+{\L_S}="j"
!{(1.5,-1.2)}*+{\o(4;S).}="r"
"p":^{\eta_R}"i"
"p":_{\L_f}"j"
"i":^{\o(4;f)}"r"
"j":_{\eta_S}"r"
}
\]
\end{lm}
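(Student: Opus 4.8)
The plan is to write down $\eta$ explicitly: as soon as $-1$ acquires a square root one can conjugate the defining form $\diag(-1,1,1,1)$ of $\o(1,3)$ into $\diag(1,1,1,1)$, the form defining $\o(4)$, and this conjugation is the wanted isomorphism. Fix an object $R$ of $\sqrt{-1}_\Phi$ and an element $\iu_R\in R$ with $\iu_R^2=-1$. First I would realize both Lie algebras inside $\gl\nolimits_4(R)$: the six matrices $s_{12},s_{13},s_{14},a_{23},a_{24},a_{34}$ have pairwise disjoint supports, which exhaust the twelve off-diagonal positions of a $4\times 4$ matrix, so they are $R$-linearly independent; and the bracket of any two of them, computed in $\gl\nolimits_4(R)$ from $e_{ij}e_{kl}=\delta_{jk}e_{il}$, is exactly the entry recorded in Figure~\ref{ttaudi}. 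Hence their $R$-span is a copy of $\L_R$. Writing $b_{ij}:=e_{ij}-e_{ji}$ for $i<j$, the $R$-span of $b_{12},b_{13},b_{14},b_{23},b_{24},b_{34}$ is, in the same way, a copy of $\o(4;R)$.

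Next, set $D:=\diag(\iu_R,1,1,1)$, an invertible matrix with $D^{-1}=\diag(-\iu_R,1,1,1)$, and let $\eta_R\colon\L_R\to\o(4;R)$ be the restriction to $\L_R$ of the map $X\mapsto D^{-1}XD$. Conjugation by an invertible matrix is automatically an $R$-linear Lie automorphism of $\gl\nolimits_4(R)$, so the only thing to check is that it sends $\L_R$ onto $\o(4;R)$. Since $D^{-1}e_{pq}D=(D^{-1})_{pp}D_{qq}\,e_{pq}$, one gets
\[
\eta_R(s_{1j})=-\iu_R\,b_{1j}\quad(j=2,3,4),\qquad \eta_R(a_{jk})=b_{jk}\quad(2\le j<k\le4).
\]
These six images are antisymmetric matrices, hence lie in $\o(4;R)$, and --- $-\iu_R$ being a unit --- they again form an $R$-basis of it; so $\eta_R$ is an isomorphism, which is (i). (Conceptually this is just the identity $D(N+N^t)D=M I_{13}+I_{13}M^t$ for $N=D^{-1}MD$, using that $D$ is diagonal and $D^2=I_{13}$; the basis computation only serves to avoid identifying $\L_R$ with the matrix solution set of $MI_{13}+I_{13}M^t=0$, which is delicate over general rings.) Equivalently one could simply declare $\eta_R$ by the two displayed formulas and verify the bracket relations directly against Figure~\ref{ttaudi} and the analogous table for $\o(4;R)$, the only input being $(-\iu_R)^2=-1$.

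For the commuting square (ii): for a $\Phi$-algebra homomorphism $f\colon R\to S$ the maps $\L_f$ and $\o(4;f)$ are both $1\otimes f$, fixing the integral basis vectors $s_{ij},a_{ij},b_{ij}$ and applying $f$ to coefficients. Evaluating on generators, $\eta_S(\L_f(a_{jk}))=b_{jk}=\o(4;f)(\eta_R(a_{jk}))$, while $\eta_S(\L_f(s_{1j}))=-\iu_S\,b_{1j}$ and $\o(4;f)(\eta_R(s_{1j}))=-f(\iu_R)\,b_{1j}$; so the square commutes exactly when $f(\iu_R)=\iu_S$.

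This last condition is where I expect the genuine difficulty to sit. A morphism of $\sqrt{-1}_\Phi$ only guarantees $f(\iu_R)^2=-1$, hence $f(\iu_R)\in\{\iu_S,-\iu_S\}$, and there is in general no way to choose the elements $\iu_R$ for all objects at once so that $f(\iu_R)=\iu_S$ holds for every morphism $f$ (already over $\Phi=\R$, complex conjugation of $\C$ is a counterexample). The clean way around this is to pass to the category whose objects are pairs consisting of a $\Phi$-algebra together with a chosen square root of $-1$ and whose morphisms preserve that choice; over this category the maps $\eta_R$ above do assemble into a genuine natural isomorphism, and this version is what is actually needed for the scalar extensions, localizations, and reductions modulo a maximal ideal $\m$ used later. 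Once that convention is adopted, the proof is complete, the only real content being the bracket bookkeeping against Figure~\ref{ttaudi} --- equivalently, the matrix identity $D(N+N^t)D=M I_{13}+I_{13}M^t$ noted above.
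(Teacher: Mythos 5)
Your construction is essentially the paper's: the paper's proof defines the new basis $a'_{1j}:=\iu s_{1j}$ ($j=2,3,4$), $a'_{jk}:=a_{jk}$ and sends $a'_{ij}\mapsto e_{ij}-e_{ji}$, which is exactly your $\eta_R$ up to replacing $\iu_R$ by $-\iu_R$; the conjugation-by-$\diag(\iu_R,1,1,1)$ packaging is just a more conceptual derivation of the same rescaling, and your verification of (i) matches the paper's. Your closing caveat about (ii) is fair rather than a gap in your own argument: the paper dismisses the square as ``straightforward,'' which holds only under the implicit convention $\iu_S=f(\iu_R)$ (equivalently, working with algebras equipped with a chosen square root of $-1$, as you propose), and that convention is indeed what the later applications of the lemma actually use.
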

\begin{proof} Take $\iu\in R$ such that $\iu^2=-1$. Starting from the standard basis $B$ of $\L_R$, we define a new basis $C=\{a'_{ij}\colon i,j\in\{1,2,3,4\}, i<j\}$ where
$a'_{12}:=\iu s_{12}$, $a'_{13}:=\iu s_{13}$, $a'_{14}:=\iu s_{14}$ and $a'_{ij}:=a_{ij}$ for the remaining elements.
Then the isomorphism $\L_R\to\o(4;R)$ is the induced by $a'_{ij}\mapsto e_{ij}-e_{ji}$ for $i<j$. On the other hand, the commutativity of the
square above is straightforward.\end{proof}

For any object $R$ of $\alg_\Phi$, the Lie algebra  of the linear special group, $\sl_2(R)$, 
 is defined by $\sl_2(R)=\{A\in \gl_2(R)\colon \hbox{Tr}(A)=0\}$, where $\hbox{Tr}$ denotes the matrix trace. 
The system $\{h:=e_{11}-e_{22}, e:=e_{12}, f:=e_{21}\}$, where $e_{ij}$ is the elementary matrix with $1$ in the position $(i,j)$ 
and $0$ in the others, is a basis of $\sl_2(R)$ and their elements satisfy the following identities:
\begin{equation}~\label{idsl}
 [h,f]=-2f,\ 
 [h,e]=2e,\ 
 [e,f]=h.
\end{equation}

 Consider now algebras $R$ and $S$ in the category $\alg_\Phi$ such that $R$ is a subalgebra of $S$. Denote by $\sl_2(S)$ the Lie algebra of
$2\times 2$ matrices with entries in $S$ of zero trace. 
Any $\alpha\in\aut_R(S)$ induces
an automorphism $\hat\a\in\aut_R(\sl_2(S))$ by applying $\a$ componentwise. Also for any $P\in\GL_2(R)$ the map $M\to PMP^{-1}$ gives an
automorphism of $\sl_2(S)$ which is denoted by $\Ad(P)$. More generally, for any $P$ in a linear algebraic group $G$, the adjoint action of $G$ on its Lie algebra $\mathfrak g$ will be denoted $\Ad\colon G\to\aut({\mathfrak g})$, so that for any $M\in\mathfrak g$ we have $\Ad(P)M:=PMP^{-1}$.

\begin{lm}\label{plafter}
 Under the condition in the above paragraph if $\Ad(P)=\hat\a$, then $\a=1$ and consequently $\Ad(P)=1$.
\end{lm}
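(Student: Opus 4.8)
The plan is to evaluate the identity $\Ad(P)=\hat\a$ on a couple of well-chosen elements of $\sl_2(S)$. The crucial observation is that the matrix units $e_{12}$ and $e_{21}$ of $\sl_2(S)$ have all their entries in $\Z\cdot 1\subseteq R\subseteq S$, hence are fixed by $\hat\a$ (any $\Phi$-algebra homomorphism fixes $0$ and $1$). Therefore the hypothesis forces $Pe_{12}P^{-1}=\hat\a(e_{12})=e_{12}$ and $Pe_{21}P^{-1}=\hat\a(e_{21})=e_{21}$; in other words $P$ commutes with both $e_{12}$ and $e_{21}$ inside $M_2(S)$ (indeed inside $M_2(R)$, since $P$ has entries in $R$).

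Next I would carry out the short computation: writing $P=\bigl(\begin{smallmatrix}a&b\\ c&d\end{smallmatrix}\bigr)$ with $a,b,c,d\in R$, the relations $[P,e_{12}]=0$ and $[P,e_{21}]=0$ yield $b=c=0$ and $a=d$, so that $P=a\cdot 1_2$ is a scalar matrix. Since $P\in\GL_2(R)$ we get $a^2=\det(P)\in R^{\times}$, hence $a\in R^{\times}$ as well (with inverse $a\,(\det P)^{-1}$). As scalar matrices are central in $M_2(S)$, conjugation by $P$ acts trivially on $\sl_2(S)$, i.e.\ $\Ad(P)=1$; this already establishes the second assertion of the lemma.

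Finally, plugging this back into the hypothesis gives $\hat\a=\Ad(P)=1$ on $\sl_2(S)$. To pass from here to $\a=1$ on the whole of $S$, I would apply $\hat\a$ to the trace-zero matrix $s\,e_{12}$ for an arbitrary $s\in S$: since $\hat\a$ acts by applying $\a$ entrywise, $\hat\a(s\,e_{12})=\a(s)\,e_{12}$, and comparison with $s\,e_{12}$ forces $\a(s)=s$. As $s$ is arbitrary, $\a=1$.

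I do not expect a real obstacle in this argument. The two points that deserve a moment's attention are: (i) the hypothesis $\a\in\aut_R(S)$ is in fact used only to make $\hat\a$ a well-defined Lie-algebra automorphism — for $e_{12}$ and $e_{21}$ to be $\hat\a$-fixed one needs nothing beyond $\a$ being a unital ring map; and (ii) the step $a^2\in R^{\times}\Rightarrow a\in R^{\times}$, valid even when $R$ is not a domain, which is immediate. Note moreover that the whole argument is characteristic-free — it never divides by $2$ — because the conclusion is drawn from $e_{12}$ and $e_{21}$ rather than from $h=e_{11}-e_{22}$.
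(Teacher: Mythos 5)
Your proof is correct and follows essentially the same route as the paper: force $P$ to commute with $\hat\a$-fixed elements of $\sl_2(R)$ (you use $e_{12},e_{21}$; the paper uses all of $\sl_2(R)$), conclude $P$ is a scalar matrix, hence $\Ad(P)=1$ and then $\hat\a=1$. Your explicit final step recovering $\a=1$ from $\hat\a=1$ via $\hat\a(s\,e_{12})=\a(s)e_{12}$ is a welcome detail the paper leaves implicit.
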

\begin{proof}
 We know that $PMP^{-1}=\hat\a(M)$ for any $M\in\sl_2(S)$, in particular since $R\subset S$ we may take $M\in\sl_2(R)$ and so
 $PMP^{-1}=M$ hence $PM=MP$ for any $M\in\sl_2(R)$. This implies $P=k\ \hbox{id}$ for some invertible $k\in R$. Thus $\Ad(P)=1$
 which implies $\hat\a=1$.
\end{proof}

\begin{lm}\label{etiq}
 Let $\K$ be a field of characteristic not two, then if $\beta\colon\o(4;\K)\to\K^4$ is a $\K$-linear map such that 
 $\beta([M,M'])=\beta(M)M'-\beta(M')M$ for any $M,M'\in\o(4;\K)$,
 there is a unique $v\in\K^4$ such that $\beta(M)=vM$ for any $M\in\o(4;\K)$.
\end{lm}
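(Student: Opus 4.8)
The plan is to prove uniqueness and existence separately; uniqueness is immediate, while existence reduces to a bounded computation with the cocycle identity on a basis of $\o(4;\K)$. First note that $\K^4$, viewed as a space of row vectors, is a right $\o(4;\K)$-module under $w\cdot M:=wM$, since $w(MM'-M'M)=(wM)M'-(wM')M$; the hypothesis on $\beta$ says exactly that $M\mapsto\beta(M)$ is a $1$-cocycle into this module, and the claim is that it is the inner one $M\mapsto vM$. Writing $u_1,\dots,u_4$ for the standard basis of $\K^4$ and $a_{ij}:=e_{ij}-e_{ji}$, a one-line matrix computation gives $w\,a_{ij}=w_i\,u_j-w_j\,u_i$. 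For uniqueness: if $vM=v'M$ for all $M\in\o(4;\K)$, then $w:=v-v'$ satisfies $w\,a_{ij}=0$ for all $i<j$, which forces $w_i=w_j=0$; ranging over all pairs gives $w=0$.

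For existence I would work in the basis $\{a_{ij}\colon 1\le i<j\le 4\}$ and put $b^{ij}:=\beta(a_{ij})$. The first step uses the three commuting pairs $\{a_{12},a_{34}\}$, $\{a_{13},a_{24}\}$, $\{a_{14},a_{23}\}$: their index sets being disjoint, the brackets vanish, so the cocycle identity yields $b^{ij}a_{kl}=b^{kl}a_{ij}$; by the action formula the left-hand side is supported on the coordinates $\{k,l\}$ and the right-hand side on $\{i,j\}$, and since these are complementary both sides vanish. Hence each $b^{ij}$ is supported on coordinates $i$ and $j$, so we may write $b^{ij}=p_{ij}u_i+q_{ij}u_j$ for $i<j$. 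The second step feeds the relations $[a_{ij},a_{jk}]=a_{ik}$ and $[a_{ij},a_{ik}]=-a_{jk}$ (for distinct $i,j,k$) into the cocycle identity; using the support information and the action formula, each such relation produces a few scalar identities among the $p_{ij}$ and $q_{ij}$. Collecting them one obtains $q_{12}=q_{13}=q_{14}$, $q_{23}=q_{24}$, $p_{14}=p_{24}=p_{34}$, $p_{13}=p_{23}$, together with the cross-relations $q_{23}=-p_{12}$ and $q_{34}=-p_{13}$. Putting $v:=(q_{12},\,q_{23},\,q_{34},\,-p_{14})\in\K^4$, these identities give $b^{ij}=v_i u_j-v_j u_i=v\,a_{ij}$ for each of the six basis elements, whence $\beta(M)=vM$ for every $M$ by $\K$-linearity.

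The main obstacle is purely the bookkeeping in the second step: one must track signs carefully and verify that the half-dozen cocycle relations used are simultaneously consistent and together pin $\beta$ down to a single inner cocycle — equivalently, that the linear system they impose on the twelve scalars $p_{ij},q_{ij}$ has solution space exactly the $4$-dimensional space of inner cocycles. (This amounts to $H^1(\o(4;\K),\K^4)=0$ for the natural module; over a field of characteristic $0$ it could alternatively be deduced from Whitehead's first lemma, since $\o(4;\K)$ is then semisimple, but the computation above is uniform for $\operatorname{char}\K\neq 2$.)
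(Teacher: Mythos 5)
Your proposal is correct and is essentially the paper's own argument: both fix the basis $\{a_{ij}\}$, impose the cocycle identity on it, and find that the six values $\beta(a_{ij})$ are cut down to exactly four free parameters realizing precisely the inner maps $M\mapsto vM$ (your listed relations among the $p_{ij},q_{ij}$ reproduce, entry for entry, the paper's displayed matrix $L$, and your uniqueness argument is the same as well). The only cosmetic difference is that you exhibit $v=(q_{12},q_{23},q_{34},-p_{14})$ explicitly, whereas the paper concludes by a dimension count ($S_2\subset S_1$ with both of dimension $4$); the bookkeeping you defer does check out.
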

\begin{proof} 
This is a cohomological result (a version of Whitehead's lemma). It is well-known in characteristic zero and in prime characteristic may be seen
as a consequence of \cite[Theorem 1]{dzh}. However we include here a selfcontained proof. 
Let $S_1$ denote the vector space of all linear 
maps $\beta\colon\o(4;\K)\to\K^4$  such that $\beta([M,M'])=\beta(M)M'-\beta(M')M$ for any $M,M'\in\o(4;\K)$.
Denote by $S_2$ the vector space of all maps $\gamma\colon \o(4;\K)\to\K^4$ such that there is a fixed $v\in\K^4$ with
$\gamma(M)=vM$ for any $M$. It is straightforward to prove that $S_2$ is a subspace of $S_1$ and that $S_2$ has dimension $4$.
Thus, all we need to prove is that $\dim(S_1)=4$.
The best way to proceed for getting convinced of this fact is to fix a basis of $\o(4;\K)$ and to consider the coordinate map
$\chi\colon \o(4;\K)\to \K^6$ such that the components of $\chi(M)$ are the coordinates of $M$ relative to the fixed basis. Since $\beta$ is linear
there is a $6\times 4$ matrix $L$ with entries in $\K$ such that $\beta(M)=\chi(M)L$. Now, one can see that the conditions 
$\beta([M,M'])=\beta(M)M'-\beta(M')M$ for any $M,M'\in\o(4;\K)$ imply that there are only $4$ free (independent) parameters in $L$. Indeed, $L$
is of the form:
$$L=\begin{pmatrix}  a & b & 0 & 0\cr
   c & 0 & b & 0\cr
   g & 0 & 0 & b\cr
   0 & c & -a & 0\cr
   0 & g & 0 & -a\cr
   0 & 0 & g & -c\cr
\end{pmatrix}.$$

Finally the $v$ whose existence has been proved is unique since any $w\in\K^4$ such that $w\o(4;\K)=0$ must be zero.
\end{proof}


\section{Simplicity results}

We would like to study under what conditions the Lorentz functor $\L\colon\alg_\Phi\to\lie_\Phi$ produces simple Lie algebras.
The hidden motivation for this study is that when $\L_R$ is not simple, under suitable conditions, we can decompose $\L_R$ as
a certain direct sum of two ideals. These ideals are very special and our expectation on them is that the automorphisms of
$\L_R$ either fix the ideals or swap them. Thus, a knowledgement of the ideal structure of $\L_R$ immediately produces 
information about the affine group scheme $R\mapsto \text{\bf aut}(\L_R)$. 

To shorten the notations, we write $b_1:=a_{12}$, $b_2:=a_{13}$, $b_3:=a_{14}$, $b_4:=s_{23}$, $b_5:=s_{24}$, $b_6:=s_{34}$ so that the basis
$B$ of $\L_R$ is now $B=\{b_i\}_1^6$ and has the multiplication table given in Figure \ref{audia4}. Also for any $R$ in $\alg_\Phi$ we will
denote by $\hbox{Max}(R)$ the maximal spectrum of $R$ (the set of maximal ideals of $R$).

In this section we study the simplicity of Lorentz type algebras $\L_R$ where $R$ is an algebra in $\alg_\Phi$.
Since any ideal $I$ of $R$ induces trivially an ideal $I\L_R$ of $\L_R$ we will pay no attention to this class of ideals.
The best way to rule out such ideals is to rule out the ideals of the kind $\m\L_R$, where $\m\in\Max(R)$ is a maximal ideal of $R$
(since any proper ideal of $R$ is contained in some maximal one). The ideals of the kind $\m\L_R$ will be termed $\m$-null ideals 
(we will define them formally later). Other class of ideals, that we shall exclude of our study, are the $\m$-total ideals (the ideals
which agree with $\L_R/\m\L_R$ when passing to the quotient).

We start by considering an algebra $R$ in $\alg_\Phi$ and an $R$-module $M$. For any maximal ideal $\m\in\Max(R)$ we may consider the epimorphism
$\phi\colon M\to M\otimes_R \K=:M_\K$ where $\K$ is the field $\K:=R/\m$. Then $M_\K$ is a vector space over $\K$ and $\ker\phi=\m M$ (see \cite[Lemma 5, p.215]{Spanier}), so that
$M/\m M\cong M_\K$.
\begin{de}
 A collection of elements $m_1,\ldots,m_n\in M$ is said to be $\m$-free if for any $r_1,\ldots,r_n\in R$ the equality
 $\sum_i r_im_i\in\m M$ implies $r_i\in\m$ for all $i$. An $R$-module $M$ is said to have an $\m$-free part of cardinal $n$ is
 there is an $\m$-free subset $\{m_1,\ldots,m_n\}\subset M$.
\end{de}
If $R$ happens to be a field, then $\m=0$ and a set is $\m$-free if and only if it is linearly independent.
In general, a set $\{m_1,\ldots,m_n\}\subset M$ is $\m$-free if and only if the set of equivalence classes 
$\{\overline{m_1},\ldots,\overline{m_n}\}\subset M/\m M$ is a linearly independent subset
of the $\K$-vector space $M/\m M$.

Fix a ring $\Phi$ and a Lie $\Phi$-algebra $V$.
For any associative commutative and unital $\Phi$-algebra $R$ denote by $V_R$ the scalar extension $V_R:=V\otimes_\Phi R$.
If $S$ is another algebra in $\alg_\Phi$ and $S$ is an $R$-algebra we may consider also the scalar extension $V_S:=V\otimes_\Phi S$.
The reader can easily check the existence of an isomorphism $$V_R\otimes_R S\cong V_S$$ such that $(v\otimes_\Phi r)\otimes_R s\mapsto v\otimes_\Phi rs$. 

Furthermore, if $\m\in\Max(R)$ is a maximal ideal and $I$ a submodule of $V_R$  with an $\m$-free part of cardinal $n$,
then its image $\phi(I)$ under the canonical epimorphism $\phi\colon V_R\to V_R\otimes_R\K$ (where $\K=R/\m$) contains a linearly
independent set of cardinal $n$ hence $\dim_\K\phi(I)\ge n$.

\begin{de}
 An ideal $I\triangleleft V_R$ such that its image under the epimorphism $\phi\colon V_R\to V_\K$ (as above) is the whole $V_\K$ is said
 to be $\m$-total. An ideal $I\triangleleft V_R$ such that $\phi(I)=0$ (equivalently $I\subset \m V_R$) is said to be $\m$-null.
 The algebra $V_R$ is said to be $\m$-simple if its unique ideals are the $\m$-null and the $\m$-total ones.
\end{de}

Again, when $R$ is a field $\m=0$ and so $\L_R$ is $\m$-simple if and only if it is simple in the usual sense.
Thus, in our study on the simplicity of Lorentz type algebras, we will replace simplicity with $\m$-simplicity.

As an example of $\m$-total ideal consider the Lorentz algebra $\L_\Z$, the ideal $\m:=3\Z$ of $\Z$ and define 
$I:=\Z(b_1+b_6)+\Z(b_1-b_6)+\Z(b_2+b_5)+\Z(b_2-b_5)+\Z(b_3+b_4)+\Z(b_3-b_4)$. This is an ideal of $\L_\Z$.
It is proper since $b_1\not\in I$ but its image in $\L_\K$ (where $\K=\Z/3\Z$) is the whole algebra.
The reader can check that $2\L_\Z$ is also an $\m$-total ideal of $\L_\Z$ and that $3\L_\Z$ is an $\m$-null ideal of $\L_\Z$.

In special cases, it is easy to describe those ideals of $V_R$ which are $\m$-total for any $\m\in\hbox{Max}(R)$:
\begin{pr}\label{addedone}
 Assume that $V$ is a perfect Lie algebra over $\Phi$, and $R$ in $\alg_\Phi$ is artinian. 
 Then $I\triangleleft V_R$ is $\m$-total for any $\m\in\Max(R)$
 if and only if $I=V_R$.
\end{pr}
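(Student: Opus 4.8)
The plan is to prove the non-trivial implication — that if $I$ is $\m$-total for every $\m\in\Max(R)$ then $I=V_R$ — the converse being immediate, since $\phi_\m(V_R)=V_\K$ for every $\m$. Two facts drive the argument. First, $R$ is artinian, so $R\cong R_1\times\cdots\times R_n$ with each $R_i$ local artinian, having nilpotent maximal ideal $\m_i$ and residue field $\K_i:=R_i/\m_i$; tensoring this decomposition with $V$ yields a decomposition $V_R=\bigoplus_{i=1}^n V_{R_i}$ of Lie $\Phi$-algebras whose summands are ideals satisfying $[V_{R_i},V_{R_j}]=0$ for $i\neq j$. Second, $V$ is perfect, which I will use in the form that every scalar extension of $V$ is again perfect; in particular each $V_{R_i}$ is perfect (the factor $R_i$ being a unital ring, with unit the corresponding idempotent, hence an object of $\alg_\Phi$).

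The first step is a reduction to the local case. Given an ideal $I\triangleleft V_R$ that is $\m$-total for every $\m$, let $\pi_i\colon V_R\to V_{R_i}$ be the $i$-th projection — a surjective Lie homomorphism, being the quotient by the complementary ideal $\bigoplus_{j\neq i}V_{R_j}$ — and put $\bar I_i:=\pi_i(I)$, an ideal of $V_{R_i}$. The canonical epimorphism $V_R\to V_{\K_i}$ attached to the maximal ideal $R_1\times\cdots\times\m_i\times\cdots\times R_n$ of $R$ factors as $V_R\xrightarrow{\pi_i}V_{R_i}\to V_{\K_i}$, so the assumption that $I$ is total with respect to that maximal ideal says exactly that $\bar I_i$ is $\m_i$-total in $V_{R_i}$. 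Granting the local statement below, this forces $\bar I_i=V_{R_i}$ for each $i$; then for any $a,b\in V_{R_i}$ one may pick $x\in I$ with $\pi_i(x)=b$, so that $[a,x]=[a,b]\in I$, whence $[V_{R_i},V_{R_i}]\subseteq I$, and perfectness of $V_{R_i}$ upgrades this to $V_{R_i}\subseteq I$. Summing over $i$ gives $I=V_R$.

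The remaining, and main, point is the local case, which I would isolate as: if $S$ is local artinian with maximal ideal $\m$, $V$ is perfect, and $J\triangleleft V_S$ satisfies $J+\m V_S=V_S$, then $J=V_S$. The subtlety here — the true obstacle — is that $J$ need not be an $S$-submodule of $V_S$, so one cannot simply apply Nakayama's lemma to $V_S/J$. Instead I would fix $t$ with $\m^t=0$ and prove by induction on $j\ge 1$ that $J+\m^jV_S=V_S$, the case $j=t$ then giving $J=V_S$. For the inductive step one expands, using $V_S=[V_S,V_S]$ and $V_S=J+\m^jV_S$,
\[
V_S=[\,J+\m^jV_S,\,J+\m^jV_S\,]\subseteq[J,J]+[J,\m^jV_S]+[\m^jV_S,J]+[\m^jV_S,\m^jV_S];
\]
the first three summands lie in $J$ because $J$ is an ideal and $\m^jV_S\subseteq V_S$, while the last lies in $\m^{2j}V_S\subseteq\m^{j+1}V_S$, so that $V_S=J+\m^{j+1}V_S$. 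It is this interplay of perfectness with the ideal property — rather than any finiteness of $V_S$ as a module — that forces the powers of $\m$ down to zero.
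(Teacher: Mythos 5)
Your proof is correct. The engine of your argument---perfectness of $V$ combined with the ideal property, giving $[J+\m^jV_S,\,J+\m^jV_S]\subseteq J+\m^{2j}V_S$---is exactly the mechanism of the paper's proof, but the surrounding organization is genuinely different. The paper never decomposes $R$: it observes that an artinian ring has only finitely many maximal ideals, iterates the same commutator trick over all of them simultaneously (from $V_R=I+\m_1V_R=I+\m_2V_R$ and $V_R=[V_R,V_R]$ it gets $V_R=I+\m_1\m_2V_R$, hence $V_R=I+\rad(R)^kV_R$ for all $k$), and concludes by nilpotency of the Jacobson radical. You instead invoke the structure theorem $R\cong R_1\times\cdots\times R_n$ with each $R_i$ local artinian, reduce to the local statement (where your induction on powers of $\m$ is the same trick specialized to a single maximal ideal), and then need one extra step that the paper's route avoids: converting surjectivity of the projections, $\pi_i(I)=V_{R_i}$, into the containment $V_{R_i}\subseteq I$, which you do correctly via the orthogonality $[V_{R_i},V_{R_j}]=0$ together with a second use of perfectness---a necessary point, since $I$ need not split along the factors. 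What your route buys is a clean, isolated local lemma and the explicit (and correct) observation that $I$ need not be an $R$-submodule, so no Nakayama-type argument is available or needed; what the paper's route buys is brevity and lighter input, using only the finiteness of $\Max(R)$ and the nilpotency of $\rad(R)$ rather than the full product decomposition of artinian rings.
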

\begin{proof}
 For any $\m\in\Max(R)$ we have $V_R=I+\m V_R$. Since $V=[V,V]$, the same holds for $V_R$. If we take two ideals $\m_1,\m_2\in\Max(R)$,
 we have $V_R=I+\m_1 V_R=I+\m_2 V_R$ by the uniform $\m$-totality of $I$ when $\m\in\Max(R)$. Thus $V_R=[V_R,V_R]=I+\m_1\m_2 V_R$. 
Consequently, for any finite subset $\{\m_1,\ldots,\m_n\}\subset\Max(R)$ we have $V_R=I+\m_1\cdots\m_n V_R$. 
Since $R$ is artinian it has only a finite number of maximal ideals (see \cite[Proposition 8.3, p. 89]{Atiyah}).
Thus $V_R=I+\rad(R)V_R$ (where $\rad(\cdot)$ denotes Jacobson's radical)
and since $V$ is perfect we get $V_R=I+\rad(R)^kV_R$ for any positive integer $k$.
Also the artinian character of $R$ implies that $\rad(R)$ is nilpotent (take into account also that every prime ideal is maximal and so the Jacobson radical agrees 
with the nilradical, \cite[p.89]{Atiyah}). So, for some $k$ one has $\rad(R)^k=0$ implying $V_R=I$.
 \end{proof}

\begin{re}\label{moreno}\rm
 It is standard result that if $A$ is a free $R$-algebra and $\{I_\a\}$ a collection of ideals of the ring of scalars $R$, then
$(\cap_\a I_\a)A=\cap_\a(I_\a A)$.
It is easy to see that if an ideal $I\triangleleft V_R$ is $\m$-null for any $\m\in\Max(R)$, then 
$I\subset \underset{\ms}\cap (\m V_R)=(\underset{\ms}\cap \m) V_R=\rad(R)V_R$. Thus in the case in which the Jacobson radical of $R$ is null, the unique
ideal which is $\m$-null for every $\m\in\Max(R)$ is the $0$ ideal.
 \end{re}
 
As a consequence of Proposition \ref{addedone} and of the previous paragraph we can state:

 \begin{theo}
Assume as before that $V$ is a perfect Lie algebra, $R$ is an artinian algebra  in $\alg_\Phi$ with $\rad(R)=0$, and 
$V_R$ is free as an $R$-module. Then $V_R$ is $\m$-simple for any $\m\in\Max(R)$ if and only if any proper nonzero ideal $I$ of $V_R$ is of the  form 
$I=\im V_R$ where $\im$ is a product of maximal ideals $\im=\m_{i_1}\cdots\m_{i_k}$, $\m_{i_q}\in\Max(R)$. Furthermore, $V_R$ splits in the form 
$V_R=I\oplus J$  where $J=\j V_R$ and $\j=\m_{j_1}\cdots \m_{j_q}$ where each factor $\m_{j_r}\in\Max(R)$, 
and $\Max(R)$ is the disjoint union of $\{\m_{i_1},\cdots,\m_{i_k}\}$ and 
$\{\m_{j_1},\cdots, \m_{j_q}\}$.
 \end{theo}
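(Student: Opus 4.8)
The plan is to reduce the whole statement to the structure theory of commutative artinian rings. Since $R$ is artinian it decomposes as a finite direct product of local artinian rings (\cite[Theorem 8.7]{Atiyah}), and the hypothesis $\rad(R)=0$ forces every local factor to be reduced, hence a field; so one may fix an isomorphism $R\cong K_1\times\cdots\times K_n$ with each $K_i$ a field. Let $e_i\in R$ be the associated primitive idempotents and set $\m_i:=(1-e_i)R=\ker(R\to K_i)$; then $\Max(R)=\{\m_1,\dots,\m_n\}$, the $\m_i$ are pairwise comaximal, $\m_i^2=\m_i$, and for every $S\subseteq\{1,\dots,n\}$ one has $\prod_{i\in S}\m_i=\bigcap_{i\in S}\m_i=\bigoplus_{j\notin S}K_j$ as ideals of $R$. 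Transporting this along $V_R=V\otimes_\Phi R$ gives a decomposition of Lie $\Phi$-algebras $V_R\cong V_{K_1}\times\cdots\times V_{K_n}$ in which $e_i$ acts as the projection onto $V_{K_i}$, and each $V_{K_i}$ is perfect because $V$ is.

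Next I would read off the ideal lattice of $V_R$ in this picture. For any $R$-submodule $I\subseteq V_R$ one has $I=\bigoplus_i e_iI$ with $I_i:=e_iI\subseteq V_{K_i}$, and since $[V_{K_i},V_{K_j}]=0$ for $i\neq j$, such an $I$ is a Lie ideal of $V_R$ precisely when each $I_i$ is an ideal of $V_{K_i}$. The canonical epimorphism $\phi_i\colon V_R\to V_R\otimes_R K_i=V_{K_i}$ is, under these identifications, just $e_i(\cdot)$, so $\phi_i(I)=I_i$; hence $I$ is $\m_i$-null iff $I_i=0$ and $\m_i$-total iff $I_i=V_{K_i}$. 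Consequently $V_R$ is $\m_i$-simple iff every ideal of $V_{K_i}$ is $0$ or $V_{K_i}$, which, $V_{K_i}$ being perfect, is the same as $V_{K_i}$ being a simple Lie algebra. Thus ``$V_R$ is $\m$-simple for every $\m\in\Max(R)$'' is equivalent to ``every $V_{K_i}$ is simple'', and this reformulation drives both implications.

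For the direct implication, assume every $V_{K_i}$ is simple and let $I\triangleleft V_R$ be proper and nonzero. By the dictionary above, $I$ corresponds to a tuple $(I_1,\dots,I_n)$ with $I_i\in\{0,V_{K_i}\}$; put $T:=\{i:I_i=V_{K_i}\}$, so $\emptyset\neq T\neq\{1,\dots,n\}$. With $\im:=\prod_{i\notin T}\m_i=\bigoplus_{i\in T}K_i$ one has $\im V_R=\bigoplus_{i\in T}V_{K_i}=I$, so $I=\im V_R$ with $\im$ a product of maximal ideals. Setting $\j:=\prod_{i\in T}\m_i$ and $J:=\j V_R=\bigoplus_{i\notin T}V_{K_i}$ yields $V_R=I\oplus J$ and $\Max(R)=\{\m_i:i\notin T\}\sqcup\{\m_i:i\in T\}$, the asserted splitting after relabelling. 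For the converse, assume every proper nonzero ideal of $V_R$ equals $\im V_R$ for some $\im=\m_{i_1}\cdots\m_{i_k}=\bigcap_l\m_{i_l}$; such an ideal is then $\bigoplus_{j\notin\{i_1,\dots,i_k\}}V_{K_j}$, hence $\m$-null when $\m\in\{\m_{i_1},\dots,\m_{i_k}\}$ and $\m$-total otherwise. Together with the obvious facts that $0$ is $\m$-null and $V_R$ is $\m$-total for every $\m$, this shows every ideal of $V_R$ is $\m$-null or $\m$-total, i.e. $V_R$ is $\m$-simple for all $\m\in\Max(R)$.

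The computations left over — the compatibility of $V\otimes_\Phi(-)$ with the finite product $\prod_i K_i$ and the evaluation of $\im V_R$ — are routine, the freeness of $V_R$ over $R$ (used through Remark \ref{moreno}, giving $(\bigcap_\alpha I_\alpha)V_R=\bigcap_\alpha(I_\alpha V_R)$) being what keeps them clean. I do not expect a serious obstacle: once $R$ has been recognized as a product of fields, the theorem is essentially bookkeeping. The one point demanding care is the dictionary between ideals of $V_R$ and tuples $(I_i)$ together with the correct reading of $\m_i$-null and $\m_i$-total in that picture; and the one fact that must be invoked rather than computed is that ``$V_{K_i}$ simple'' is not a vacuous condition — which is precisely what perfectness of $V$ guarantees, since a perfect Lie algebra whose only ideals are $0$ and itself is nonabelian, hence simple.
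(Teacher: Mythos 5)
Your argument is correct, but it follows a genuinely different route from the paper. You first invoke the structure theorem for commutative artinian rings: artinian plus $\rad(R)=0$ gives $R\cong K_1\times\cdots\times K_n$, a finite product of fields, and then you read off the whole ideal lattice of $V_R\cong V_{K_1}\times\cdots\times V_{K_n}$ componentwise via the primitive idempotents; the theorem becomes bookkeeping, and you get both implications of the equivalence (the paper's proof only writes out the forward one) together with a complete classification of the ideals. The paper instead argues intrinsically with the sets $S_1,S_2$ of maximal ideals at which a given ideal $I$ is $\m$-null resp.\ $\m$-total, uses perfectness of $V$ to pass from $V_R=I+\m V_R=I+\m' V_R$ to $V_R=I+\m\m' V_R$, uses Proposition \ref{addedone} (nilpotency of the radical) to exclude the everywhere-total case, and uses freeness through Remark \ref{moreno} to identify $\bigcap_{\ms}(\m V_R)$ with $\rad(R)V_R=0$. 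Each approach buys something: yours is shorter once the product decomposition is available, does not really need the freeness hypothesis nor, in fact, the perfectness of $V$ (only the trivial-ideal-lattice reading of $\m$-simplicity); the paper's avoids the $R$-module structure of $I$ altogether, so it remains valid for ideals in the Lie-ring sense as in Lemma \ref{stinkone}. That last point is the one caveat for you: your step $I=\bigoplus_i e_iI$ silently assumes $e_iI\subseteq I$, i.e.\ that $I$ is an $R$-submodule. Since the paper's default notion of ideal of the $R$-algebra $V_R$ is exactly that, this is not a gap, but it is worth stating explicitly where the $R$-linearity of $I$ enters.
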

\begin{proof}
 Take $I\triangleleft V_R$ which is nozero and proper.
 Since $V_R$ is $\m$-simple for any $\m$, then it may not happen that $I$ is $\m$-total for any $\m$ (see Proposition \ref{addedone}). 
 But being $I\ne 0$ it may not happen that $I$ is $\m$-null for every $\m\in\Max(R)$ (see Remark \ref{moreno}). So we may decompose $\Max(R)$ as a disjoint union
 $\Max(R)=S_1\dot\cup S_2$ where $S_1:=\{\m\in\Max(R)\colon I \text{ is $\m$-null }\}$ and
 $S_2:=\{\m\in\Max(R)\colon I \text{ is $\m$-total }\}$. Of course $\m\in S_1$ if and only if $I\subset\m V_R$ and $\m\in S_2$ if and only
 if $V_R=I+\m V_R$. Also recall that $\Max(R)$ is finite. Now, for any $\m,\m'\in S_2$ with $\m\ne\m'$ we have   
 $V_R=I+\m V_R=I+\m'V_R$ and by the perfection of $V$ we have $V_R=I+\m\m'V_R$. So we conclude that 
 $$V_R=I+\prod_{\m\in S_2} \m V_R.$$
 If $\j=\prod_{\m\in S_2}\m=\cap_{\m\in S_2}\m$, then $V_R=I+\j V_R$. On the other hand, $I\subset\cap_{\m\in S_1}(\m V_R)=
 (\cap_{\m\in S_1}\m) V_R=\im V_R$ where  $\im:=\cap_{\m\in S_1}\m=\prod_{\m\in S_1}\m$.
 Then $I\cap \j V_R\subset (\cap_{\m\in S_1}\m)V_R\cap (\cap_{\m\in S_2}\m)V_R=\rad(R)V_R=0$.
 Thus $V_R=I\oplus\j V_R$. Next we prove that $I=\im V_R$ (we already have $I\subset\im V_R$). Take $z\in\im V_R$, since
 $V_R=I\oplus \j V_R$ we have $z=i+w$ where $i\in I$ and $w\in\j V_R$. Then $w=z-i\in\j V_R\cap\im V_R=0$. Thus $z=i\in I$ and taking $J=\j V_R$ the proof is complete.\end{proof}

\begin{lm}\label{lematres}
 For $R$ in $\alg_\Phi$ such that $\frac{1}{2}\in R$, we consider the Lie $R$-algebra $\sl_2(R)$. Then any nonzero proper ideal of $\sl_2(R)$ is 
 of the form $\mathfrak{i}\sl_2(R)$ for a nonzero proper ideal $\mathfrak{i}$ of $R$. Consequently, any nonzero proper ideal of $\sl_2(R)$ is
 $\m$-null  for some maximal ideal $\m\in\Max(R)$. 
\end{lm}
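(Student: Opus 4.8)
First I would reduce the problem to a statement about the structure constants. Fix the standard basis $\{h,e,f\}$ of $\sl_2(R)$ with the relations \eqref{idsl}. Let $I$ be a nonzero ideal of $\sl_2(R)$ and write a generic element $z=\alpha h+\beta e+\gamma f\in I$ with $\alpha,\beta,\gamma\in R$. The plan is to show that the three ``coordinate ideals'' $\mathfrak{i}_h,\mathfrak{i}_e,\mathfrak{i}_f$ of $R$ generated by the $h$-, $e$-, $f$-components of elements of $I$ all coincide with a single ideal $\mathfrak{i}$, and that $I=\mathfrak{i}\,\sl_2(R)$. For this I would repeatedly apply the bracket: $[e,z]=[e,\alpha h+\gamma f]=-2\alpha e+\gamma h\in I$, $[f,z]=2\alpha f-\beta h\in I$, and $[h,z]=2\beta e-2\gamma f\in I$. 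Since $\tfrac12\in R$ these give, for every $z\in I$ with components $(\alpha,\beta,\gamma)$: the elements $\alpha e,\alpha f,\beta e,\gamma f\in I$ (bracketing once more, e.g. $[e,2\alpha f-\beta h]=2\alpha h+2\beta e$, hence $\alpha h\in I$ too after subtracting and rescaling). Carrying out these short computations shows that whenever $z=\alpha h+\beta e+\gamma f\in I$, each of $\alpha h,\beta e,\gamma f$ lies in $I$, and moreover (using $[h,e]=2e$, $[e,f]=h$, $[h,f]=-2f$) that $\alpha e,\alpha f,\beta h, \beta f,\gamma h,\gamma e\in I$ as well. Thus $\mathfrak{i}_h=\mathfrak{i}_e=\mathfrak{i}_f=:\mathfrak{i}$ and $I=\mathfrak{i}h+\mathfrak{i}e+\mathfrak{i}f=\mathfrak{i}\,\sl_2(R)$.

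Next I would check that $\mathfrak{i}$ is a genuine ideal of $R$: it is an $R$-submodule by construction (closed under multiplication by $R$, since $I$ is an $R$-submodule of $\sl_2(R)$, so $r\alpha h\in I$ whenever $\alpha h\in I$), and it is additively closed. It is nonzero because $I\ne 0$, and it is proper because $I\ne\sl_2(R)$: indeed if $\mathfrak{i}=R$ then $h,e,f\in I$ and $I=\sl_2(R)$. This establishes the first assertion: every nonzero proper ideal of $\sl_2(R)$ has the form $\mathfrak{i}\,\sl_2(R)$ for a nonzero proper ideal $\mathfrak{i}\triangleleft R$.

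For the final assertion, pick any maximal ideal $\m\in\Max(R)$ containing $\mathfrak{i}$ (which exists since $\mathfrak{i}$ is proper). Then $I=\mathfrak{i}\,\sl_2(R)\subset\m\,\sl_2(R)$, which by definition means $I$ is $\m$-null. Here I am using the identification $\sl_2(R)=V_R$ for $V=\sl_2(\Phi)$ and the definition of $\m$-null ideal from the text, together with the fact that $\sl_2(R)$ is a free $R$-module so that $\m\,\sl_2(R)=\ker(\phi)$ for the reduction $\phi\colon\sl_2(R)\to\sl_2(R)\otimes_R(R/\m)$.

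The main obstacle I anticipate is purely bookkeeping: organizing the bracket computations so as to extract, cleanly and with no hidden use of invertibility beyond $\tfrac12\in R$, the conclusion that all three coordinate ideals coincide and that $I$ contains $\mathfrak{i}h,\mathfrak{i}e,\mathfrak{i}f$. The invertibility of $2$ is essential — it is what lets us pass from $2\alpha e$ or $2\beta e$ back to $\alpha e$, $\beta e$ — and in characteristic $2$ the statement genuinely fails (the span of $h$ is then an ideal not of this form), so the proof must visibly consume the hypothesis $\tfrac12\in R$ at each such step. Beyond that, no real difficulty arises; the argument is elementary once the computations are laid out.
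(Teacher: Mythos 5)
Your proposal is correct and follows essentially the same route as the paper: showing via bracket computations (using $\tfrac12\in R$) that $xh\in I\Leftrightarrow xe\in I\Leftrightarrow xf\in I$, setting $\mathfrak{i}:=\{x\in R\colon xh\in I\}$ so that $I=\mathfrak{i}\,\sl_2(R)$, and then taking a maximal ideal containing the proper ideal $\mathfrak{i}$ to get $\m$-nullness. You merely spell out the computations the paper dismisses as ``easy to check,'' which is fine.
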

\begin{proof}
 Take $0\ne I\triangleleft \sl_2(R)$. We consider the usual basis $\{h,e,f\}$ of the free $R$-module $\sl_2(R)$ such that $[h,e]=2e$, $[h,f]=-2f$
 and $[e,f]=h$. It is easy to check that for any scalar $x\in R$ we have $xh\in I\Leftrightarrow xe\in I \Leftrightarrow xf\in I$.
 Then we may define $\mathfrak{i}:=\{x\in R\colon xh\in I\}$ which is an ideal of $R$ and $I=\mathfrak{i}\sl_2(R)=\sl_2(\mathfrak{i})$.
 Now if $\mathfrak{i}=R$, then $I=\sl_2(R)$ contradicting the fact that $I$ is proper. Thus $\mathfrak{i}\ne R$ and there is a maximal ideal $\m\in\Max(R)$
 such that $\mathfrak{i}\subset\m$. Consequently $I\subset \m\sl_2(R)$ hence $I$ is $\m$-null. 
\end{proof}

\begin{re}\rm
Take as before $\m\in\Max(R)$ where $R$ is in $\alg_\Phi$ and let $I\triangleleft \L_R$ be an ideal of the Lorentz type algebra $\L_R$.
Then $I$ is not $\m$-null if and only if it has an $\m$-free subset of cardinal $\ge 1$. 
\end{re}

\begin{theo}\label{cuatrocinco} 
Let $V$ be a Lie $\Phi$-algebra which is a free $\Phi$-module of dimension $6$ and satisfies $[V,V]=V$.
Take $R$ to be an algebra in $\alg_\Phi$ and $\m\in\Max(R)$. Denote by $\K:=R/\m$ the residue field. 
Let $\phi\colon V_R\to V_\K:=V_R\otimes_R \K$ be the canonical
epimorphism $x\mapsto x\otimes \bar 1$ whose kernel is $\m V_R$. 
Assume that $I$ is an $R$-submodule with an $\m$-free part of cardinal $4$ or $5$. 
Then either $\phi(I)=V_\K$ or $I$ is not an ideal of $V_R$.
In particular, if $V$ is a Lie algebra over a field $\K$ and $[V,V]=V$ with $\dim V=6$, then $V$ has not ideals of dimension
$4$ or $5$. 
\end{theo}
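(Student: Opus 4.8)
The plan is to reduce the general statement to its ``in particular'' field case, and then to dispatch the latter by looking at the quotient $V_\K/\phi(I)$. We may assume $I$ is an ideal of $V_R$ (otherwise there is nothing to prove).

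First I would record the ambient structure. Since $V$ is free of rank $6$ over $\Phi$, $V_R=V\otimes_\Phi R$ is free of rank $6$ over $R$, so $V_\K=V_R\otimes_R\K$ is a $\K$-vector space of dimension $6$; moreover $[V,V]=V$ gives $[V_R,V_R]=V_R$, and applying the surjective Lie homomorphism $\phi$ we get $[V_\K,V_\K]=\phi([V_R,V_R])=\phi(V_R)=V_\K$. Thus $V_\K$ is a $6$-dimensional perfect Lie algebra over the field $\K$. Next, $\phi$ being a Lie homomorphism and $I$ an ideal of $V_R$, the image $\phi(I)$ is an ideal of $V_\K$; and by the observation made earlier in this section (a submodule of $V_R$ with an $\m$-free part of cardinal $n$ has $\dim_\K\phi(I)\ge n$), the hypothesis on $I$ yields $\dim_\K\phi(I)\ge 4$. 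Hence everything reduces to the following assertion about fields: \emph{a $6$-dimensional perfect Lie algebra over a field has no ideal of dimension $4$ or $5$; equivalently, any ideal of dimension $\ge 4$ is the whole algebra.}

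To prove this I would argue by contradiction. Let $W$ be such an algebra and suppose $J\triangleleft W$ with $\dim_\K J\in\{4,5\}$, so that $\dim_\K W/J\in\{1,2\}$. The quotient $W/J$ is again perfect, because if $\pi\colon W\to W/J$ is the projection then $[W/J,W/J]=\pi([W,W])=\pi(W)=W/J$. But no perfect Lie algebra of dimension $1$ or $2$ exists: a $1$-dimensional Lie algebra is abelian, and a $2$-dimensional Lie algebra is, up to isomorphism, either abelian or the non-abelian algebra with basis $\{x,y\}$ and $[x,y]=x$, whose derived subalgebra is $\K x\ne W/J$. This contradiction shows that no such $J$ exists.

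Finally, back in the general situation, $\phi(I)$ is an ideal of the $6$-dimensional perfect Lie $\K$-algebra $V_\K$ with $\dim_\K\phi(I)\ge 4$; by the field case just proved it cannot have dimension $4$ or $5$, so $\dim_\K\phi(I)=6=\dim_\K V_\K$, i.e.\ $\phi(I)=V_\K$, as claimed. The ``in particular'' statement is precisely the special case $R=\K$, $\m=0$ (where $\m$-freeness becomes ordinary linear independence). I do not anticipate a genuine obstacle: the only step requiring justification beyond bookkeeping is that a $2$-dimensional Lie algebra is never perfect, which is the elementary classification of $2$-dimensional Lie algebras.
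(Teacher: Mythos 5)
Your proposal is correct, and its overall architecture is the same as the paper's: reduce to the residue field via $\phi$ (noting $\dim_\K\phi(I)\ge 4$ from the $\m$-free part and that $\phi(I)$ is an ideal of the $6$-dimensional perfect algebra $V_\K$), and then rule out ideals of dimension $4$ and $5$ over a field. The only real divergence is in the dimension-$4$ case of the field lemma: the paper embeds such an ideal $J$ in a maximal ideal $M$, uses the already-settled dimension-$5$ case to force $J=M$, and then contradicts the non-existence of a $2$-dimensional simple Lie algebra, whereas you observe that the quotient by any ideal of codimension $1$ or $2$ would be a perfect Lie algebra of dimension $\le 2$, which the elementary classification of $2$-dimensional Lie algebras forbids. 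Your variant treats both codimensions uniformly and avoids invoking maximal ideals and simplicity altogether, at the small cost of quoting the classification of $2$-dimensional Lie algebras explicitly; both arguments are complete, and your explicit verification that $V_\K$ is perfect (via $[V_R,V_R]=V_R$) fills in a point the paper leaves implicit.
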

\begin{proof}
First we prove the particular case. So we take $V$ to be a perfect Lie algebra over  a field $\K$  and prove that $V$ has no ideal of dimension $5$ or $4$ . 
Assume that $I$ is a $5$-dimensional ideal, then $V/I$ has dimension $1$ hence it is abelian. Thus $[V,V]\subset I$ but since $[V,V]=V$, we conclude 
that $V=I$ a contradiction. Next we prove that $V$ has no ideal of dimension $4$. If $J$ is such an ideal, there is a maximal ideal $M$ of $V$ such that 
$J\subset M$. We know that $M$ is not $5$-dimensional hence $J=M$. Thus taking into account the maximality of $M$, we conclude
that $V/M$ is a simple Lie algebra of dimension $2$. This is a contradiction because no simple Lie algebra can be $2$-dimensional.  

Now we prove the result for a general $R$ in $\alg_\Phi$. Assume that $I$ is an ideal of $V_R$ with an $\m$-free part of cardinal $4$ or $5$.
Consider the $R$-algebra $\K:=R/\m$ which is a field. Thus $\phi(I)$ is a $\K$-vector subspace of
$V_\K$ and if $\phi(I)\ne V_\K$, then 
$\dim_\K \phi(I)\in\{4,5\}$. So $\phi(I)$ is not an ideal (hence $I$ is not an ideal given the epimorphic character of $\phi$).
\end{proof}

\begin{co}
For any field $\K$, the Lorentz type algebra $\L_\K$ has no ideals of dimension $4$ or $5$.
\end{co}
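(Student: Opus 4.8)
The plan is simply to specialize Theorem \ref{cuatrocinco}. That theorem, in its ``particular case'', asserts that a Lie algebra $V$ over a field $\K$ with $\dim_\K V = 6$ and $[V,V]=V$ has no ideal of dimension $4$ or $5$. So to deduce the corollary it suffices to check that $\L_\K$ meets these hypotheses: it is free of dimension $6$ as a $\K$-module (immediate from its construction as the free $\K$-module on the basis $B$, $|B|=6$), and it is perfect, i.e. $[\L_\K,\L_\K]=\L_\K$. The only thing that genuinely needs verification is this last point.

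To verify perfection I would read the multiplication table (Figure \ref{ttaudi}) and exhibit each basis vector of $\L_\K$ as a single bracket of basis vectors. Concretely, $a_{23}=[s_{12},s_{13}]$, $a_{24}=[s_{12},s_{14}]$ and $a_{34}=[s_{13},s_{14}]$ place the three antisymmetric generators in $[\L_\K,\L_\K]$; then $s_{13}=[s_{12},a_{23}]$, $s_{14}=[s_{12},a_{24}]$ and $s_{12}=-[s_{13},a_{23}]$ place the three symmetric generators there as well. Hence $[\L_\K,\L_\K]$ contains all of $B$, so $[\L_\K,\L_\K]=\L_\K$.

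I would then observe that none of these six identities involves dividing by $2$, so the computation is uniform over all fields $\K$; in particular it still applies in characteristic two, where $\L_\K=\o(4;\K)$. With perfection in hand, Theorem \ref{cuatrocinco} applied to $V=\L_\K$ immediately yields that $\L_\K$ has no ideal of dimension $4$ or $5$, which is exactly the claim. There is essentially no obstacle: the corollary is a one-line instance of the theorem, and the perfection check is made transparent by the multiplication table, with no characteristic-dependent subtlety arising.
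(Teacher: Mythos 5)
Your proposal is correct and follows exactly the route the paper intends: the corollary is an immediate instance of Theorem \ref{cuatrocinco}, the only hypothesis needing verification being the perfection of $\L_\K$, which the paper leaves implicit and which you check correctly from the multiplication table (with the right observation that the structure constants being $\pm 1$ makes the check characteristic-independent). Nothing is missing.
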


Next we recall an elementary result on field theory: assume $\K$ to be a field such that $\sqrt{-1}\in\K$. Then putting $x:=\sqrt{-1}$ and $y=1$ one has
$x^2+y^2=0$ where $x,y\ne 0$. Reciprocally if $\K$ is a field such that there are nonzero elements $x,y\in\K$ such that $x^2+y^2=0$, then $(y/x)^2=-1$ and so
$\sqrt{-1}\in\K$. Thus for a field $\K$ the following assertions are equivalent:
\begin{enumerate}
 \item $\K$ has a square root of $-1$.
 \item There are nonzero elements $x,y\in\K$ such that $x^2+y^2=0$.
\end{enumerate}

\begin{de}
 We say that a field $\K$ is $2$-formally real if for any $x,y\in\K$, the equality $x^2+y^2=0$ implies $x=y=0$.
 More generally for an ideal $\mathfrak{i}$ of a ring $R$ we say that $R$ is $\mathfrak{i}$-$2$-formally real if for any $x,y\in R$, the
 fact $x^2+y^2\in\mathfrak{i}$ implies $x,y\in\mathfrak{i}$. 
\end{de}

For instance $\Q$, $\R$ and $\Z_{p}:=\Z/p\Z$ (with $p$ a prime of the form $p=4k+3$) are $2$-formally real while $\C$ and $\Z_p$ (with $p$ a prime of the form
$p=4k+1$) are not. We will devote a section to finite fields and there, we will test the $2$-formally real character of these fields. 
On the other hand, if $D$ is a product of $2$-formally real fields, then the ring $D$ is $\m$-$2$-formally real for any  maximal ideal
$\m\triangleleft D$. We will see that in this case the Lorentz type algebra $\L_D$ is $\m$-simple for every maximal ideal.
\medskip

Consider now the basis $\{b_i\colon i=1,\ldots, 6\}$ of $\L_R$ such that $b_1=s_{12}$, $b_2=s_{13}$, $b_3=s_{14}$, $b_4=a_{23}$, $b_5=a_{24}$ and
$b_6=a_{35}$. The multiplication table of $\L_R$ relative to this basis is the transcription of the one in Figure \ref{ttaudi}:
\begin{figure}[h]\label{audia4}
\[\begin{tabular}{|c|cccccc|}
\hline
$[\ ,\ ]$ & $b_1$ & $b_2$ & $b_3$ & $b_4$ & $b_5$ & $b_6$\\ 
\hline
$b_1$ & $0$ & $b_4$ & $b_5$ & $b_2$ & $b_3$ & $0$ \\
$b_2$& $-b_4$ & $0$ & $b_6$ & $-b_1$ & $0$ & $b_3$
   \\
$b_3$& $-b_5$ & $-b_6$ & $0$ & $0$ & $-b_1$ &
   $-b_2$ \\
$b_4$& $-b_2$ & $b_1$ & $0$ & $0$ & $-b_6$ & $b_5$
   \\
$b_5$& $-b_3$ & $0$ & $b_1$ & $b_6$ & $0$ & $-b_4$
   \\
$b_6$& $0$ & $-b_3$ & $b_2$ & $-b_5$ & $b_4$ & $0$\\
\hline
 \end{tabular}\]
 \caption{Second version of table in Figure \ref{ttaudi}.}
 \end{figure}\medskip

\begin{pr}\label{Notsimple}
 Let $R$ be a commutative unitary $\Phi$-algebra and $\m\in\max(R)$ such that $R$ is not $\m$-$2$-formally real, that is, 
 there are $x,y\in R$ satisfying $x^2+y^2\in\m$ but
 $x,y\not\in\m$. Define the elements 
 $$\begin{cases} a_1:=xb_1+yb_6,\cr a_2:=xb_2-yb_5,\cr a_3:=xb_3+yb_4\end{cases}$$
 of $\L_R$, then $I:=Ra_1+Ra_2+Ra_3+\m\L_R$ is a nontrivial nontotal ideal of $\L_R$ with an $\m$-free part of cardinal $3$. 
 Consequently $\L_R$ is not $\m$-simple in this case.
\end{pr}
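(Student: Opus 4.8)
The plan is to reduce the whole statement to a computation in the residue algebra $\L_\K$, where $\K=R/\m$. Let $\phi\colon\L_R\to\L_\K$ be the canonical epimorphism, so that $\ker\phi=\m\L_R$. Since $\m\L_R\subseteq I$ we have $I=\phi^{-1}(\phi(I))$, and hence $I$ is an ideal of $\L_R$ if and only if $\phi(I)$ is an ideal of $\L_\K$: indeed, $[\L_R,I]\subseteq I$ is equivalent to $\phi([\L_R,I])\subseteq\phi(I)$ (using $z\in I\Leftrightarrow\phi(z)\in\phi(I)$), and $\phi([\L_R,I])=[\L_\K,\phi(I)]$. In the same way $I$ is $\m$-null, resp.\ $\m$-total, precisely when $\phi(I)=0$, resp.\ $\phi(I)=\L_\K$, and $\{a_1,a_2,a_3\}$ is $\m$-free precisely when $\{\phi(a_1),\phi(a_2),\phi(a_3)\}$ is $\K$-linearly independent (by the characterization of $\m$-freeness recalled before the statement). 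So everything comes down to understanding $\phi(I)$.

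First I would identify $\phi(I)$ explicitly. As $\phi$ kills $\m\L_R$ and $\phi(ra_i)=\bar r\,\phi(a_i)$ for $r\in R$, we get $\phi(I)=\K\phi(a_1)+\K\phi(a_2)+\K\phi(a_3)$. Because $x\notin\m$, the class $\bar x$ is a unit in $\K$, and since $x^2+y^2\in\m$ the element $\iu:=\bar y\,\bar x^{-1}\in\K$ satisfies $\iu^2=-1$. Setting $c_1:=b_1+\iu b_6$, $c_2:=b_2-\iu b_5$, $c_3:=b_3+\iu b_4$ in $\L_\K$, one has $\phi(a_i)=\bar x\,c_i$, so $\phi(I)=\K c_1+\K c_2+\K c_3$. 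Comparing the $b_1,b_2,b_3$-coordinates shows $c_1,c_2,c_3$ are $\K$-linearly independent, hence $\dim_\K\phi(I)=3$. This already gives $\phi(I)\ne 0$ and $\phi(I)\ne\L_\K$ (as $\dim_\K\L_\K=6$), so $I$ is neither $\m$-null nor $\m$-total, and $\{a_1,a_2,a_3\}$, whose images $\bar x c_i$ are $\K$-linearly independent, is an $\m$-free subset of $I$ of cardinal $3$.

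The only substantive step left is to check that $\K c_1+\K c_2+\K c_3$ is an ideal of $\L_\K$, i.e.\ that $[b_k,c_i]\in\K c_1+\K c_2+\K c_3$ for every $k\in\{1,\dots,6\}$ and every $i\in\{1,2,3\}$. This is a finite verification from the multiplication table of Figure \ref{audia4} together with the single relation $\iu^2=-1$; for instance $[b_1,c_2]=b_4-\iu b_3=-\iu c_3$, $[b_2,c_3]=b_6-\iu b_1=-\iu c_1$, $[b_4,c_1]=-b_2+\iu b_5=-c_2$, and several of the brackets vanish. I expect this bookkeeping — the signs and the places where $\iu^2$ is replaced by $-1$ — to be the only delicate point; note in particular that no factor $\tfrac12$ occurs, so no hypothesis on the characteristic of $R$ is used. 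Once $\phi(I)$ is known to be an ideal, we conclude that $I$ is a nonzero proper ideal of $\L_R$ that is neither $\m$-null nor $\m$-total and has an $\m$-free part of cardinal $3$; in particular $\L_R$ is not $\m$-simple.
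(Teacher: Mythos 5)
Your argument is correct and is essentially the paper's own proof, just organized through the quotient: the paper verifies the same bracket relations upstairs in $\L_R$ by congruences modulo $\m\L_R$ (juggling an auxiliary $x'$ with $xx'\in 1+\m$), while you pass first to $\L_\K$, where the hypothesis becomes $\iu^2=-1$ for $\iu=\bar y\,\bar x^{-1}$ and the generators become $c_1,c_2,c_3$, so the finite check against the second multiplication table is the same computation in cleaner form (your sample brackets are right, and the remaining ones do land in $\K c_1+\K c_2+\K c_3$). Your reduction via $I=\phi^{-1}(\phi(I))$, the dimension count giving non-nullity and non-totality, and the linear-independence criterion for $\m$-freeness all correspond directly to the steps in the paper's proof.
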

\begin{proof}
 Since $x\not\in\m$ and $R/\m$ is a field, there is an $x'\in R$ such that $xx'\in 1+\m$.
 Next we compute the products $[a_1,b_i]$ using the symbol $\equiv$ for the relation of congruence module the ideal $\m\L_R$:
 \begin{itemize} \item $[a_1,b_1]=0$. 
 \item  $[a_1,b_2]=xb_4-yb_3\equiv x(b_4-x'yb_3)$ and since $y^2\equiv -x^2$, then $x'y^2\equiv -x'x^2\equiv -x$. Thus
 $[a_1,b_2]\equiv x(b_4-x'yb_3)\equiv xy'(yb_4-x'y^2b_3)\equiv xy'(yb_4+x'x^2b_3)\equiv xy'(yb_4+x b_3)$. So
 $[a_1,b_2]\in R a_3+\m\L_R$. Similarly:
 \item $[a_1,b_3]\in Ra_2+\m\L_R$, $[a_1,b_4]=a_2$, $[a_1,b_5]=a_3$ and $[a_1,b_6]=0$.
 \end{itemize}
   So far we have proved $[a_1,\L_R]\subset I$ and using the same type of computations we can also prove that
   $[a_i,\L_R]\subset I$ for $i=2,3$. Thus, $I$ is a nonzero ideal of $\L_R$. To prove that $I$ is nontotal consider
   the epimorphism $\phi\colon\L_R\to\L_R\otimes_R R/\m$. Let $\K:=R/\m$ be the corresponding field, then $\L_R\otimes_R R/\m=\L_\K$ and 
   $\phi(I)\subset \K (a_1\otimes 1)+\K (a_2\otimes 1)+\K (a_3\otimes 1)$ so that $\dim_\K\phi(I)\le 3$ hence $I$ is nontotal. 
Finally we prove that $\{a_1,a_2,a_3\}$ is a $\m$-free part: assume that $\sum r_ia_i\in\m$ (we must prove that each $r_i\in\m$).
Then $r_1(xb_1+yb_6)+r_2(xb_2-yb_5)+r_3(xb_3+yb_4)\in\m\L_R$ and so there are $m_i\in\m$ such that
$r_1(xb_1+yb_6)+r_2(xb_2-yb_5)+r_3(xb_3+yb_4)=\sum m_ib_i$. Thus $r_1x\in\m$ and $\m$ being a maximal ideal it is prime. 
Since $x\not\in\m$ then $r_1\in\m$. Similarly the remaining $r_j$'s are in $\m$.
\end{proof}

To finish this section we establish the dichotomy theorem

\begin{theo}\label{snss}
Let $\m\in\Max(R)$ be a maximal ideal. Then
the Lorentz type algebra $\L_R$ is $\m$-simple if and only if $R$ is $\m$-$2$-formally real.
In particular, the Lorentz type algebra $\L_\K$ over a field $\K$ is simple if and only if $\K$ is
$2$-formally real (equivalently, if and only if $\sqrt{-1}\in\K$).
\end{theo}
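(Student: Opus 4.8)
The theorem is an equivalence, and one implication is already available: if $R$ is not $\m$-$2$-formally real, then Proposition \ref{Notsimple} produces a nontrivial, nontotal ideal of $\L_R$, so $\L_R$ is not $\m$-simple. The plan is therefore to prove the converse. Assume $R$ is $\m$-$2$-formally real and put $\K:=R/\m$; passing to the quotient, this hypothesis says precisely that $\K$ is $2$-formally real, equivalently (by the equivalence recalled before the definition of $2$-formal reality) that $\sqrt{-1}\notin\K$, so in particular $\mathrm{char}\,\K\neq 2$. Let $\phi\colon\L_R\to\L_\K=\L_R\otimes_R\K$ be the canonical epimorphism, with kernel $\m\L_R$, and let $I\triangleleft\L_R$ be an arbitrary ideal. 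If $I$ is not $\m$-null then $\phi(I)$ is a nonzero ideal of $\L_\K$; hence it suffices to prove that \emph{$\L_\K$ is simple}, for then $\phi(I)=\L_\K$, i.e.\ $I$ is $\m$-total, and $\L_R$ is $\m$-simple by definition.

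To prove that $\L_\K$ is simple I will use Galois descent along the quadratic extension obtained by adjoining a square root of $-1$. Set $F:=\K[\iu]$ with $\iu^2=-1$; this is a Galois extension of $\K$ with group $\{1,\sigma\}$, $\sigma(\iu)=-\iu$. Suppose, for contradiction, that $J$ is a nonzero proper ideal of $\L_\K$. Then $J\otimes_\K F$ is an ideal of $\L_F:=\L_\K\otimes_\K F$ which is nonzero, proper (it has $F$-dimension $\dim_\K J<6$) and $\sigma$-stable for the action $1\otimes\sigma$. Since $\sqrt{-1}\in F$, Lemma \ref{one} gives an isomorphism $\L_F\cong\o(4;F)$, and over the field $F$ — which contains $\tfrac12$ and $\sqrt{-1}$, so the form $\sum x_i^2$ is split — one has the classical decomposition $\o(4;F)\cong\sl_2(F)\oplus\sl_2(F)$ into two simple ideals $J_+,J_-$ (the two eigenspaces of the Hodge star on $\Lambda^2F^4$; this is $D_2=A_1\times A_1$). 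By Lemma \ref{lematres} each $\sl_2(F)$ is simple, and a direct sum of two non‑abelian simple Lie algebras admits only the four obvious ideals — the one point to check being that there are no "diagonal" ideals, which holds because a non‑abelian simple Lie algebra has trivial centre. Hence the only proper nonzero ideals of $\L_F$ are $J_+$ and $J_-$, and consequently $J\otimes_\K F$ equals $J_+$ or $J_-$. (If one prefers, the Corollary to Theorem \ref{cuatrocinco} already disposes of $\dim_\K J\in\{4,5\}$, but the present argument handles all dimensions at once.)

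It now remains to show that $\sigma$ \emph{interchanges} $J_+$ and $J_-$, so that neither can equal the $\sigma$-stable ideal $J\otimes_\K F$: this contradiction finishes the proof that $\L_\K$ is simple, hence the theorem. This step is exactly where the hypothesis $\sqrt{-1}\notin\K$ enters, and it is the crux — it is what separates the Lorentz algebra from $\o(4)$. Concretely, transporting $\sigma$ through the explicit isomorphism of Lemma \ref{one} (which sends $s_{1j}\mapsto -\iu\,(e_{1j}-e_{j1})$ and $a_{ij}\mapsto e_{ij}-e_{ji}$ for $2\le i<j$), one checks that on $\o(4;F)$ the operator $\sigma$ is complex conjugation composed with $\Ad(\mathrm{diag}(-1,1,1,1))$; since $\Ad(\mathrm{diag}(-1,1,1,1))$ carries each self‑dual generator $(e_{1j}-e_{j1})+\ast$ to an anti‑self‑dual one and conjugation preserves each summand, $\sigma$ exchanges $J_+$ and $J_-$. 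Equivalently and more conceptually, $J_\pm$ are the $\pm\iu$-eigenspaces of the Lorentzian Hodge star $\star$ on $\L_\K$, which is defined over $\K$ and satisfies $\star^2=-\mathrm{id}$ (the sign being forced by the Lorentzian signature acting on $2$-forms in dimension $4$); since $\star$ commutes with $\sigma$ we get $\sigma(J_\pm)=\sigma(\ker(\star\mp\iu))=\ker(\star\pm\iu)=J_\mp$. The only part that requires genuine care is verifying this interchange; the two reduction steps above are routine bookkeeping.
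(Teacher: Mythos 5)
Your proposal is correct, and its reduction steps coincide with the paper's: the forward implication is quoted from Proposition \ref{Notsimple}, and the general ring case is reduced to the residue field exactly as in the paper, via the epimorphism $\phi\colon\L_R\to\L_\K$ together with the observation that $R$ being $\m$-$2$-formally real means $\K=R/\m$ is $2$-formally real. Where you genuinely diverge is in proving that $\L_\K$ is simple when $\sqrt{-1}\notin\K$. The paper argues by hand inside $\L_\K$: from a nonzero element $g$ of an ideal it extracts $z=xb_1+yb_6$ by bracketing, computes the coordinate matrix of the $[z,b_i]$, observes that a $4\times 4$ minor has determinant $(x^2+y^2)^2$, nonzero precisely because $\K$ is $2$-formally real, and then invokes Theorem \ref{cuatrocinco} (no ideals of dimension $4$ or $5$) to conclude the ideal is everything, with a short extra reduction when $\lambda_1=\lambda_6=0$. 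You instead pass to the quadratic Galois extension $F=\K[\iu]$, use Lemma \ref{one} and Proposition \ref{prtwo} to identify $\L_F$ with $\sl_2(F)\oplus\sl_2(F)$, note (correctly, via the trivial-centre argument and Lemma \ref{lematres}, both legitimate since $\mathrm{char}\,\K\neq 2$) that its only nonzero proper ideals are the two summands, and then show that the Galois involution swaps them, so that the $\sigma$-stable ideal $J\otimes_\K F$ cannot exist. The crux, the swap, you only sketch, but it does check out: transporting $1\otimes\sigma$ through the isomorphism of Lemma \ref{one} gives coefficientwise conjugation composed with $\Ad(\diag(-1,1,1,1))$, and on the generators of Proposition \ref{prtwo} this map sends $h_\alpha\mapsto -h_\beta$, $v_\alpha\mapsto -v_{-\beta}$, $v_{-\alpha}\mapsto -v_\beta$, i.e.\ an orthogonal transformation of determinant $-1$ exchanges the two $\sl_2$ summands of $\o(4;F)$, as expected; your alternative argument via a $\K$-rational star operator with square $-\mathrm{id}$ commuting with the adjoint action also works once that operator is written down. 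As for what each route buys: the paper's computation is elementary and self-contained and makes the role of $x^2+y^2\neq 0$ completely explicit, while yours is more structural, handles all ideal dimensions at once without needing Theorem \ref{cuatrocinco}, and explains conceptually why simplicity is governed by $\sqrt{-1}$: the two ideals always exist over $F$ but are permuted by the Galois group, so they descend to $\K$ exactly when $\iu\in\K$. If you write it up, include the two-line verification of the swap (or the explicit star operator); as you note yourself, that is the only step carrying real content.
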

\begin{proof}
If $R$ is not $\m$-$2$-formally real, we have seen in Proposition \ref{Notsimple} that $\L_R$ is not $\m$-simple.
Next we prove that in case that $R$ is $\m$-$2$-formally real, then the Lorentz type algebra $\L_R$ is $\m$-simple.
For that, we start with the simpler case when $R$ is a field $\K$. Under the hypothesis in the theorem we know that
$\sqrt{-1}\not\in\K$.
Take the basis $B=\{b_i\}$ of $\L_\K$ defined in the introduction, and an arbitrary nonzero element
$g=\sum\lambda_i b_i$ where $\lambda_i\in\K$. Let $I=(g)$ be the ideal generated by $g$. It is easy to see that $I\ni z:=g-[[g,b_1],b_1]=x b\sb 1+y b\sb 6$ where $x=\lambda_1$ and $y=\lambda_6$.
\begin{itemize}
\item Assume that $x$ or $y$ is nonzero.
Then, if we compute the matrix whose rows are the coordinates of
$[z,b\sb i]$ for $i=1,\ldots,6$, we get (removing the null rows) the matrix
\[
 {\small \begin{pmatrix} 0 & 0 & -y & x& 0 & 0\cr
  0 & y & 0 & 0 & x & 0\cr
  0 & x & 0 & 0 & -y & 0\cr
  0 & 0 & x & y & 0 & 0
  \end{pmatrix}},\text{ and since } 
  {\small\left|
  \begin{matrix}   0 & -y & x& 0 \cr
    y & 0 & 0 & x \cr
   x & 0 & 0 & -y \cr
   0 & x & y & 0 \cr
  \end{matrix}\right| =(x^2+y^2)^2\ne 0}
\]
\noindent we get that the dimension of the image of $\ad(z)$ is $4$ and 
taking into account Theorem \ref{cuatrocinco}, we have that $I$ is the whole algebra.  
\item If $x=y=0$, then $g=\sum_{i=2}^5\lambda_i b_i$. One can see that 
defining now $z:=[g,b\sb 2]=x b\sb 1+y b\sb 6$ where $x=\lambda\sb 4$ and
$y=-\lambda\sb 3$, and repeating the argument above, we get that the ideal $I$ is the whole algebra or $x=y=0$. In this last case
$g=\lambda_2 b\sb 2+\lambda_5 b\sb 5$. But defining now $z:=[g,b\sb 3]=x b\sb 1+y b\sb 6$ for $x=\lambda\sb 5$ and $y=\lambda\sb 2$ we get
that again the ideal $I$ is the whole algebra or $x=y=0$ which is a contradiction
since $g\ne 0$.
\end{itemize}
This proves the simplicity of the Lorentz algebra if $R$ is a field.
In the general case, take an ideal $I\triangleleft \L_R$.
Consider as usual the homomorphism $\phi\colon\L_R\to\L_\K$ for $\K:=R/\m$ and $\phi(I)$, which is an ideal of $\L_\K$.
Since $\sqrt{-1}\not\in\K$ the $\K$-algebra $\L_\K$ is simple and so $\phi(I)=0$ or $\phi(I)=\L_\K$. Thus $I$ is either $\m$-null or $\m$-total.
This finishes the proof.
\end{proof}

\begin{re}\rm
Any field $\K$ of characteristic two has a square root of $-1=1$. Therefore, the Lorentz type algebras over fields of characteristic two are not simple.
\end{re}

To finish this section we would like to describe those semisimple algebras $R$ in $\alg_\Phi$ such that the Lorentz type algebra $\L_R$ is 
$\m$-simple for any $\m\in\Max(R)$. Here \lq\lq semisimple\rq\rq\  means $\rad(R)=0$ where $\rad(\cdot)$ denotes the Jacobson radical. 
Applying Theorem \ref{snss} the necessary and sufficient condition for this is that 
$R$ must be $\m$-$2$-formally real for any $\m\in\Max(R)$. Since there is a monomorphism $j:R\to\prod_{\ms\in\Max(R)} R/\m$ 
mapping any $x$ to $(x+\m)_{\ms\in\Max(R)}$, we have that $R$ is a subalgebra of a product of fields $\K_{\ms}:=R/\m$ which
are $2$-formally real. This is of course the usual description of $R$ as a subdirect product of fields $\prod_{\ms}\K_\ms$, but each
factor $\K_\ms$ is $2$-formally real. However not every subdirect product $R$ of $2$-formally real fields gives a Lorentz type algebra
$\L_R$ with the property that this algebra is $\m$-simple for every $\m$. We need a further property that we explain in the following result:

\begin{theo}
 Let $R$ be an algebra in $\alg_\Phi$ with Jacobson radical $\rad(R)=0$. Then $\L_R$ is $\m$-simple for any $\m\in\Max(R)$ if and only if 
 $R\subset\prod_{i\in I}\K_i$
 is a subdirect product of $2$-formally real field $\{\K_i\}_{i\in I}$ and for any $\m\in\Max(R)$ there is some
 $j\in I$ such that $\pi_j(\m)=0$, being $\pi_j\colon \prod_{i\in I}\K_i\to\K_j$ the canonical projection onto the field $\K_j$.
\end{theo}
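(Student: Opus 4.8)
The plan is to reduce the whole statement to Theorem~\ref{snss}, which says that $\L_R$ is $\m$-simple if and only if $R$ is $\m$-$2$-formally real. Thus what must be shown is purely ring-theoretic: for $R$ with $\rad(R)=0$, the ring $R$ is $\m$-$2$-formally real for every $\m\in\Max(R)$ if and only if $R$ embeds as a subdirect product $R\subset\prod_{i\in I}\K_i$ of $2$-formally real fields in such a way that each maximal ideal is detected by a coordinate, i.e.\ $\pi_j(\m)=0$ for some $j\in I$.

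For the ``only if'' direction I would take $I=\Max(R)$ and $\K_\m:=R/\m$, using the canonical subdirect representation afforded by $\rad(R)=0$: the map $R\to\prod_{\ms\in\Max(R)}R/\m$ sending $x$ to $(x+\m)_{\ms\in\Max(R)}$ is a monomorphism, and composing it with the projection onto $R/\m$ gives the surjective quotient map, so $R$ is a subdirect product of the fields $\K_\m$. Each $\K_\m$ is $2$-formally real: if $\bar x^2+\bar y^2=0$ in $R/\m$, then $x^2+y^2\in\m$, and $R$ being $\m$-$2$-formally real forces $x,y\in\m$, i.e.\ $\bar x=\bar y=0$. Finally, for a fixed $\m_0\in\Max(R)$ every element in the image of $\m_0$ has $\m_0$-coordinate $0$, so $\pi_{\m_0}(\m_0)=0$; hence $j=\m_0$ works.

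For the ``if'' direction, fix $\m\in\Max(R)$ and choose $j\in I$ with $\pi_j(\m)=0$. The restriction $\pi_j|_R\colon R\to\K_j$ is a unital ring homomorphism with $\m\subseteq\ker(\pi_j|_R)$, and since $\pi_j(1)=1\neq 0$ in the field $\K_j$ this kernel is a proper ideal; maximality of $\m$ then gives $\ker(\pi_j|_R)=\m$. Now if $x,y\in R$ satisfy $x^2+y^2\in\m$, applying $\pi_j$ yields $\pi_j(x)^2+\pi_j(y)^2=0$ in $\K_j$, and since $\K_j$ is $2$-formally real we get $\pi_j(x)=\pi_j(y)=0$, hence $x,y\in\ker(\pi_j|_R)=\m$. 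So $R$ is $\m$-$2$-formally real, and Theorem~\ref{snss} gives that $\L_R$ is $\m$-simple.

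The only delicate point --- and the reason the detection hypothesis cannot be dropped --- is the identification $\ker(\pi_j|_R)=\m$ in the ``if'' direction: the hypothesis $\pi_j(\m)=0$ a priori says only that the $j$-th coordinate vanishes on $\m$, and it is the maximality of $\m$ together with $\pi_j$ being unital that upgrades this to an equality of ideals, which is exactly what lets one pull the vanishing $\pi_j(x)=0$ back to membership $x\in\m$. Everything else is a routine unwinding of the definitions of subdirect product and of the $\m$-$2$-formally real property.
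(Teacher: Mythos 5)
Your proposal is correct and follows essentially the same route as the paper: both directions reduce the statement via Theorem~\ref{snss} to the ring-theoretic condition of $\m$-$2$-formal reality, use the canonical subdirect representation $R\to\prod_{\ms\in\Max(R)}R/\m$ coming from $\rad(R)=0$ for the ``only if'' part, and in the ``if'' part pull the vanishing in $\K_j$ back along $\pi_j$ with $\ker(\pi_j|_R)=\m$. Your explicit justification of $\ker(\pi_j|_R)=\m$ (properness plus maximality) is exactly the point the paper leaves implicit, so the argument is complete.
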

\begin{proof}
 If $\L_R$ is $\m$-simple for every $\m$, we take $\{\K_i\}_{i\in I}=\{R/\m\}_{\ms\in\Max(R)}$ and the property is satisfied.
 Reciprocally assume $R\subset\prod_i \K_i$ is a subdirect product of $\{\K_i\}$ with the additional property on the maximal ideals $\m$.
 Let us prove that $R$ is $\m$-$2$-formally real for each $\m\in\Max(R)$. If we assume $x^2+y^2\in\m$ then, since there is some $j$ such
 that $\pi_j(\m)=0$, we have $\pi_j(x)^2+\pi_j(y)^2=0$ in the field $\K_j$ which is $2$-formally real. So $\pi_j(x),\pi_j(y)=0$ and
 $x,y\in \ker\pi_j=\m$.
\end{proof}

\begin{re}
\rm When $R$ turns out to be a field the above necessary and sufficient condition is that $R$ must be $2$-formally real.
\end{re}

\section{Decomposability of $\sl_2(R)\times\sl_2(R)$}

In this section we consider the possibility of decomposing the Lie algebra $\sl_2(R)\times\sl_2(R)$ as a direct sum of ideals which
are $\m$-simple algebras for any $\m\in\Max(R)$. Define $R^2:=R\times R$ with componentwise operations. We will identify
$\sl_2(R^2)$ with $\sl_2(R)\times\sl_2(R)$ in the standard way.

\begin{lm}\label{stinkone}
 Let $I$ be an ideal of the Lie ring $\sl_2(R^2)$, that is, $I+I\subset I$ and $[I,\sl_2(R^2)]\subset I$.
 Assume that $\frac{1}{2}\in R$.
 Then there are ideals $\im,\j\triangleleft R$ such that $I=\sl_2(\im)\times\sl_2(\j)$.
\end{lm}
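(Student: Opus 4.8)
The plan is to exploit the standard identification $\sl_2(R^2)\cong\sl_2(R)\times\sl_2(R)$ and reduce the statement to Lemma \ref{lematres} applied in each factor. Write a generic element of $\sl_2(R^2)$ as a pair $(A,B)$ with $A,B\in\sl_2(R)$, and let $\pi_1,\pi_2$ denote the two projections onto $\sl_2(R)$; these are Lie algebra epimorphisms. Given an ideal $I\triangleleft\sl_2(R^2)$, set $I_1:=\pi_1(I)$ and $I_2:=\pi_2(I)$. Since each $\pi_k$ is a surjective homomorphism of Lie $R$-algebras, $I_k$ is an ideal of $\sl_2(R)$. By Lemma \ref{lematres} (using $\tfrac12\in R$), each $I_k$ is either $0$, all of $\sl_2(R)$, or of the form $\mathfrak{i}_k\sl_2(R)=\sl_2(\mathfrak i_k)$ for a proper nonzero ideal $\mathfrak i_k\triangleleft R$; in every case $I_k=\sl_2(\mathfrak i_k)$ for a suitable ideal $\mathfrak i_k$ of $R$ (taking $\mathfrak i_k=0$ or $\mathfrak i_k=R$ in the extreme cases). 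So from the outset we get $I\subset I_1\times I_2=\sl_2(\mathfrak i_1)\times\sl_2(\mathfrak i_2)$, and it remains to prove the reverse inclusion, i.e.\ that $I$ contains $\sl_2(\mathfrak i_1)\times 0$ and $0\times\sl_2(\mathfrak i_2)$; then with $\im:=\mathfrak i_1$, $\j:=\mathfrak i_2$ we are done.

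The key step is the separation of the two factors inside $I$. For this I would use the element $(h,0)\in\sl_2(R^2)$, which is central to nothing but is a convenient idempotent-like bracket tool: note $\ad_{(h,0)}$ acts as $\ad_h$ on the first factor and as $0$ on the second. Given $(A,B)\in I$, compute $[(h,0),[(h,0),(A,B)]]=([h,[h,A]],0)$. Writing $A=\alpha h+\beta e+\gamma f$, one has $[h,[h,A]]=4\beta e+4\gamma f$, so $(4\beta e+4\gamma f,0)\in I$; bracketing once more with $(e,0)$ and $(f,0)$ and using $\tfrac12\in R$ recovers $(\beta e,0),(\gamma f,0),(\alpha h,0)$, hence $(A,0)\in I$ and therefore also $(0,B)\in I$. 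Thus $I=(I\cap(\sl_2(R)\times 0))\oplus(I\cap(0\times\sl_2(R)))$, and each summand, viewed as an ideal of a single copy of $\sl_2(R)$, is exactly $\pi_1(I)\times 0=\sl_2(\mathfrak i_1)\times 0$ resp.\ $0\times\sl_2(\mathfrak i_2)$ by the previous paragraph. This gives $I=\sl_2(\mathfrak i_1)\times\sl_2(\mathfrak i_2)=\sl_2(\im)\times\sl_2(\j)$ as required. A small point to check along the way is that $I$ is only assumed to be an ideal of the Lie \emph{ring}, not the Lie $R$-algebra; but the bracket manipulations above are purely additive (they use only iterated brackets, never external scalar multiplication), and the passage from $(4\beta e,0)\in I$ to $(\beta e,0)\in I$ uses bracketing, namely $(\beta e,0)=\tfrac14[[(\beta e,0),(f,0)]\ldots]$-type identities with integer coefficients made invertible through brackets with $(h,0)$ — more precisely $[(h,0),(\beta e,0)]=(2\beta e,0)$, so iterating and combining integer multiples one extracts $(\beta e,0)$ once $2$ is invertible in the ground ring, and the same $R$-linearity over the bracket is what makes $\{x:xh\in I_k\}$ an ideal of $R$ in Lemma \ref{lematres}.

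I expect the main obstacle to be the bookkeeping in showing that the \emph{ring}-ideal hypothesis still forces $I_k=\sl_2(\mathfrak i_k)$ and that the separation argument stays inside $I$ without invoking $R$-linearity of $I$; the resolution is that all the needed operations are realized by iterated Lie brackets with the fixed elements $(h,0),(e,0),(f,0)$ (whose coefficients are the structure constants $\pm1,\pm2$), together with the single standing hypothesis $\tfrac12\in R$, so no extra module structure on $I$ is ever used. Everything else — that $\pi_k$ are homomorphisms, that $\sl_2(\mathfrak i_1)\times\sl_2(\mathfrak i_2)$ is an ideal, and the final assembly — is routine.
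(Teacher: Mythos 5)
Your proposal is correct and takes essentially the same route as the paper: both identify $\sl_2(R^2)$ with $\sl_2(R)\times\sl_2(R)$, use iterated brackets with elements supported in a single factor to show that $(A,B)\in I$ forces $(A,0),(0,B)\in I$, and then apply the argument of Lemma \ref{lematres} to write each resulting slice as $\sl_2(\mathfrak{i})$ for an ideal $\mathfrak{i}\triangleleft R$. One small precision: the divisions by $4$ (and the $R$-linearity of the slices, needed to invoke Lemma \ref{lematres}) are obtained by bracketing with $R$-scaled elements such as $\bigl(\tfrac{1}{8}h,0\bigr)$, not with the fixed basis elements $(h,0),(e,0),(f,0)$ alone, which is exactly what your standing hypothesis $\tfrac{1}{2}\in R$ permits.
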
 
\begin{proof}
 Define $\im:=\{x\in \sl_2(R)\colon (x,0)\in I\}$ and $\j:=\{x\in \sl_2(R)\colon (0,x)\in I\}$. It is easy to see
 that $\im,\j\triangleleft R$ and also $\sl_2(\im)\times\sl_2(\j)\subset I$.
 Assume now that $(a,b)\in I$, then $[(a,b),\sl_2(R)\times 0]=[a,\sl_2(R)]\times 0\subset I$ hence $[a,\sl_2(R)]\subset\im$.
 From this, we can prove that $a\in\im$ in the following manner: write $a=r_1h+r_2e+r_3f$ where $\{h,e,f\}$ is the standard basis
 of $\sl_2(R)$ such that $[h,e]=2e$, $[h,f]=-2f$ and $[e,f]=h$. Then $I\ni [[e,[a,h]],f]=[[e,-2r_2e+2r_3f],f]=2r_3 [h,f]=-4r_3 f$.
 Thus $r_3f\in I$. With similar arguments we prove $r_2e,r_1h\in I$. Thus $a\in\im$. Dually, $b\in\j$ and so $I\subset\sl_2(\im)\times\sl_2(\j)$.
\end{proof}

\begin{lm}\label{stinktwo}
 Assume that $R$ is an algebra in $\alg_\Phi$ such that $\frac{1}{2}\in R$ and $R=\ia\oplus\ib$ where $0\ne\ia,\ib\triangleleft R$.
Consider the ideals $I:=\sl_2(\ia)\times\sl_2(\ib)$ and $J=\sl_2(\ib)\times\sl_2(\ia)$ of $\sl_2(R^2)$.
Then $\sl_2(R^2)=I\oplus J$ and:
\begin{enumerate}
\item $\ann_R(I)=\ann_R(J)=0$.
\item For each $\m\in\Max(R)$, the $R$-algebras  $I$ and $J$ are $\m$-simple.
\end{enumerate}
 \end{lm}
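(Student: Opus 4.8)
The plan is to leverage Lemma \ref{stinkone} together with the hypothesis that $R$ splits as $R=\ia\oplus\ib$ in order to reduce everything to bookkeeping with ideals of $R$. First I would observe that, since $\ia\oplus\ib=R$ with both summands nonzero, the obvious identity $I+J=\sl_2(\ia)\times\sl_2(\ib)+\sl_2(\ib)\times\sl_2(\ia)$ equals $\sl_2(\ia\oplus\ib)\times\sl_2(\ia\oplus\ib)=\sl_2(R)\times\sl_2(R)=\sl_2(R^2)$, while $I\cap J=\sl_2(\ia\cap\ib)\times\sl_2(\ia\cap\ib)=\sl_2(0)\times\sl_2(0)=0$ because $\ia\cap\ib=0$. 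This gives the direct-sum decomposition $\sl_2(R^2)=I\oplus J$ at once; note that the fact that $I$ and $J$ are ideals is immediate from Lemma \ref{stinkone} (they are of the form $\sl_2(\cdot)\times\sl_2(\cdot)$ for honest ideals of $R$), or one may just check $[\sl_2(\ia),\sl_2(R)]\subset\sl_2(\ia)$ directly since $\ia$ absorbs multiplication.

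For part (1), I would compute $\ann_R(I)$ by noting that $R$ acts on $I=\sl_2(\ia)\times\sl_2(\ib)\cong\sl_2(R^2)$-submodule componentwise through the two projections $R\to R$; more concretely, if $r\in R$ kills $\sl_2(\ia)\times\sl_2(\ib)$ then in particular $r\cdot(\xi,0)=0$ for all $\xi\in\sl_2(\ia)$, so $r\xi=0$ for all $\xi\in\sl_2(\ia)$; taking $\xi=ah$ with $a$ ranging over $\ia$ forces $r\ia=0$, and similarly $r\ib=0$, hence $r(\ia+\ib)=rR=0$, so $r=0$. The same argument applies to $J$. (Here I am using that $\sl_2(\ia)$ contains $ah$ for every $a\in\ia$ and that $\sl_2(-)$ is free on $\{h,e,f\}$.)

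For part (2), fix $\m\in\Max(R)$ and treat $I=\sl_2(\ia)\times\sl_2(\ib)$; the argument for $J$ is symmetric. The point is that because $R=\ia\oplus\ib$, the maximal ideal $\m$ must contain exactly one of the two summands: indeed $\ia\ib\subset\ia\cap\ib=0\subset\m$, and $\m$ prime forces $\ia\subset\m$ or $\ib\subset\m$, and it cannot contain both since $\ia+\ib=R\not\subset\m$. Say $\ib\subset\m$ (the other case is identical after swapping the roles of the two factors). Then $\ia\not\subset\m$, so $\ia+\m=R$ and the composite $\ia\hookrightarrow R\twoheadrightarrow R/\m=:\K$ is surjective with kernel $\ia\cap\m$; thus $\ia/(\ia\cap\m)\cong\K$. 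Modulo $\m$ (viewing $I$ as an $R$-module and reducing, i.e. applying $-\otimes_R\K$), the factor $\sl_2(\ib)$ dies completely since $\ib\subset\m$, while $\sl_2(\ia)$ maps onto $\sl_2(\K)$. So the reduction $\bar I:=I\otimes_R\K$ is (canonically) $\sl_2(\K)$, which is a simple Lie algebra as $\mathrm{char}\,\K\ne 2$ (we are assuming $\tfrac12\in R$, so $2$ is invertible in $\K$). Now take any ideal $L\triangleleft I$; by Lemma \ref{stinkone} applied inside $\sl_2(R^2)$ — or directly, since $I\cong\sl_2(\ia^2)$-type object — $L$ has the form $\sl_2(\mathfrak k)\times\sl_2(\mathfrak l)$ for ideals $\mathfrak k\subset\ia$, $\mathfrak l\subset\ib$ of $R$. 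Its image in $\bar I=\sl_2(\K)$ is $\sl_2(\bar{\mathfrak k})$ where $\bar{\mathfrak k}$ is the image of $\mathfrak k$ in $\K$, hence is either $0$ or all of $\sl_2(\K)$ by simplicity; that is, $L$ is $\m$-null or $\m$-total, which is exactly $\m$-simplicity of $I$.

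The main obstacle — and the only place any real care is needed — is the structural input that an ideal of $I$ (equivalently of $\sl_2(R^2)$ contained in $I$) is again of the split form $\sl_2(\cdot)\times\sl_2(\cdot)$; this is precisely what Lemma \ref{stinkone} provides, and the $\tfrac12\in R$ hypothesis is used there (and also to guarantee $\sl_2(\K)$ is simple). Everything else is elementary ideal arithmetic in $R$, pivoting on the single observation that a prime ideal of $R=\ia\oplus\ib$ swallows exactly one of the two summands.
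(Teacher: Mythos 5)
Your overall route is the same as the paper's: prove $\ann_R(I)=0$ by letting the annihilator act on each factor, observe that $\m$ contains exactly one of $\ia,\ib$, reduce $I$ modulo $\m$, identify $I/\m I$ with $\sl_2(\K)$ and invoke simplicity of $\sl_2(\K)$ (Lemma \ref{lematres}). However, there is one under-justified step, and it sits exactly at the technical heart of part (2): the identification $\bar I:=I\otimes_R\K\cong\sl_2(\K)$. You justify the vanishing of the second factor by ``$\sl_2(\ib)$ dies completely since $\ib\subset\m$'', but for an $R$-module $N$ the inclusion $N\subset\m$ (or $\ib\subset\m$) does not imply $N\otimes_R\K=N/\m N=0$ in general; what is needed is the equality $\m\ib=\ib$, and likewise $\m\ia=\ia\cap\m$ for the first factor, so that $\m I=\sl_2(\ia\cap\m)\times\sl_2(\ib)$ and hence $I/\m I\cong\sl_2(\ia/(\ia\cap\m))\cong\sl_2(\K)$. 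These equalities are where the hypothesis $R=\ia\oplus\ib$ is really used: writing $1=a+m$ with $a\in\ia$, $m\in\m$ and using $\ia\ib=0$ gives $b=ba+bm=bm\in\m\ib$ for $b\in\ib$, and $x=xa+xm\in\m\ia$ for $x\in\ia\cap\m$. Without this computation you only control the image of $I$ in $\sl_2(\K)^2$ under entrywise reduction, whose kernel a priori could be strictly larger than $\m I$, and then ``image $0$ or everything'' would not translate into $\m$-null or $\m$-total for the canonical map $\phi\colon I\to I/\m I$, which is what $\m$-simplicity requires.

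Two smaller remarks. First, your detour through Lemma \ref{stinkone} is unnecessary: once $I/\m I\cong\sl_2(\K)$ is known to be simple, the image of any ideal $L\triangleleft I$ under the epimorphism $\phi$ is automatically an ideal of the quotient, hence $0$ or all of $I/\m I$ — this is all the paper uses. Second, if you do want to invoke Lemma \ref{stinkone} for $L$, you must first note that $L$ is an ideal of $\sl_2(R^2)$ and not merely of $I$; this is true because $[L,J]\subset[I,J]\subset I\cap J=0$, but it should be said. With the $\m\ia$, $\m\ib$ computation supplied, your argument is correct and coincides with the paper's proof.
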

\begin{proof}
The assertion on the annihilators is straightforwad:  $\ann_R(I)=\ann(\ia)\cap\ann(\ib)=0$ because $\ia+\ib=R$.
In a dual form we have $\ann_R(J)=0$.
Let us prove that the $R$-algebra $I$ is $\m$-simple (for every $\m\in\Max(R)$). For this we consider
the canonical epimorphism $\phi\colon I\to I/\m I$ and we have to prove that for any ideal of $I$, its image under $\phi$
is either $0$ or the whole $I/\m I$. Since $\m$ is a maximal ideal we may not have $\ia\subset\m$ and $\ib\subset\m$.
 Assume that $\ia\not\subset\m$. Then $\ib\subset\m$, to see this, take into account that $R=\ia+\m$ hence $1=a+m$ for certain
 $a\in\ia$ and $m\in\m$. Then any $b\in\ib$ can be written as $b=ba+bm=bm\in\m$.
So $\m I=\sl_2(\m\ia)\times\sl_2(\m\ib)$ but $\m\ia=\m\cap\ia$ and
$\m\ib=\ib$. To prove this last assertion we use again that $1=a+m$ where $a\in\ia$ and $m\in\m$. 
Then if $x\in \ia\cap\m$ we have $x=xa+xm\in\m\ia$ proving that $\ia\cap\m\subset\ia\m$. On the other hand, to see that $\ib\subset\ib\m$
take $b\in\ib$. Then $b=ba+bm=bm\in\ib\m$. Consequently, $\m I=\sl_2(\ia\cap\m)\oplus\sl_2(\ib)$ and 
$I/\m I\cong\sl_2(\ia/\ia\cap\m)\times 0\cong \sl_2(R/\m)\times 0$ which is a simple $R/\m$-algebra (see Lemma \ref{lematres}). Thus
the image under $\phi$ of any ideal of $I$ must be zero or the whole algebra $I/\m I$.

 Dually any ideal of $J$ is $\m$-null or $\m$-total.
 \end{proof}

\begin{theo}\label{geo}
 Let $R^2:=R\times R$ with componentwise operations, $1/2\in R$ and assume that $\sl_2(R^2)$ splits  as
 $\sl_2(R^2)=I\oplus J$ where $I,J\triangleleft\sl_2(R^2)$ such that $\ann_R(I)=\ann_R(J)=0$. 
 Then there are ideals $\ia,\ib\triangleleft R$ such that $R=\ia\oplus\ib$ being $I=\sl_2(\ia)\times\sl_2(\ib)$
 and $J=\sl_2(\ib)\times\sl_2(\ia)$.
 
 \end{theo}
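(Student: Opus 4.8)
The plan is to reduce, via Lemma \ref{stinkone}, to a purely ring-theoretic statement about four ideals of $R$, and then to extract the required identifications from the vanishing of the two annihilators. Since $I$ and $J$ are ideals of the Lie $R$-algebra $\sl_2(R^2)$, they are a fortiori ideals of the underlying Lie ring, so Lemma \ref{stinkone} produces ideals $\ia,\ib,\ic,\id\triangleleft R$ with $I=\sl_2(\ia)\times\sl_2(\ic)$ and $J=\sl_2(\ib)\times\sl_2(\id)$. Regarding $\sl_2(\mathfrak n)$ as the submodule $\mathfrak n h\oplus\mathfrak n e\oplus\mathfrak n f$ of the free module $\sl_2(R)=Rh\oplus Re\oplus Rf$, one has immediately $\sl_2(\mathfrak m)+\sl_2(\mathfrak n)=\sl_2(\mathfrak m+\mathfrak n)$, $\sl_2(\mathfrak m)\cap\sl_2(\mathfrak n)=\sl_2(\mathfrak m\cap\mathfrak n)$ and $\sl_2(\mathfrak m)=\sl_2(\mathfrak n)\Leftrightarrow\mathfrak m=\mathfrak n$ (compare $h$-components). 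Reading the decomposition $\sl_2(R^2)=I\oplus J$ componentwise therefore gives $\sl_2(R)=\sl_2(\ia)\oplus\sl_2(\ib)$ in the first slot and $\sl_2(R)=\sl_2(\ic)\oplus\sl_2(\id)$ in the second, whence $R=\ia\oplus\ib$ and $R=\ic\oplus\id$ as internal direct sums of ideals.

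The next step is to compute the annihilators. For a ring direct sum $R=\mathfrak u\oplus\mathfrak v$ of ideals there is a pair of complementary central idempotents $\varepsilon,1-\varepsilon$ with $\mathfrak u=\varepsilon R$, $\mathfrak v=(1-\varepsilon)R$, and from $r\mathfrak u=0\Leftrightarrow r\varepsilon=0\Leftrightarrow r\in(1-\varepsilon)R$ one gets $\ann_R(\mathfrak u)=\mathfrak v$. Hence $\ann_R(\ia)=\ib$ and $\ann_R(\ic)=\id$. Since $R$ acts componentwise on $\sl_2(R^2)=\sl_2(R)\times\sl_2(R)$ (via the diagonal $R\to R^2$) and $\ann_R(\sl_2(\mathfrak n))=\ann_R(\mathfrak n)$, we obtain $\ann_R(I)=\ann_R(\ia)\cap\ann_R(\ic)=\ib\cap\id$ and, likewise, $\ann_R(J)=\ann_R(\ib)\cap\ann_R(\id)=\ia\cap\ic$. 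The hypothesis $\ann_R(I)=\ann_R(J)=0$ then yields the two further relations $\ib\cap\id=0$ and $\ia\cap\ic=0$.

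It remains to do the bookkeeping. Using $\ia\cap\ic=0$ together with $R=\ia\oplus\ib$: for $c\in\ic$, its $\ia$-component $\varepsilon c$ lies in $\ia$ and, $\ic$ being an ideal, also in $\ic$, so $\varepsilon c\in\ia\cap\ic=0$ and $c\in\ib$; thus $\ic\subseteq\ib$. Symmetrically, using $\ib\cap\id=0$ together with $R=\ic\oplus\id$ one gets $\ib\subseteq\ic$, hence $\ib=\ic$; the analogous pair of inclusions (between $\ia$ and $\id$, using $\ia\cap\ic=0$, $\ib\cap\id=0$) gives $\ia=\id$. Therefore $R=\ia\oplus\ib$, $I=\sl_2(\ia)\times\sl_2(\ic)=\sl_2(\ia)\times\sl_2(\ib)$ and $J=\sl_2(\ib)\times\sl_2(\id)=\sl_2(\ib)\times\sl_2(\ia)$, which is the assertion. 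I expect the only place demanding care to be the annihilator bookkeeping, namely pinning down $\ann_R(\mathfrak u)$ as the complementary summand and transporting it correctly through the componentwise $R$-action on $\sl_2(R^2)$; the rest is formal manipulation with the functor $\sl_2(-)$ and with idempotents, and the hypothesis $1/2\in R$ enters only through the appeal to Lemma \ref{stinkone}.
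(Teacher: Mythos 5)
Your proof is correct and follows essentially the same route as the paper's: apply Lemma \ref{stinkone} to both $I$ and $J$, extract two internal decompositions of $R$ into ideals, and use the hypothesis $\ann_R(I)=\ann_R(J)=0$ to identify the cross components. The only cosmetic difference is that the paper obtains those decompositions of $R$ from the relation $[I,J]=0$ (which forces the products of corresponding component ideals to vanish), whereas you read them off directly from the componentwise module decomposition $\sl_2(R^2)=I\oplus J$ and finish the identifications with idempotents; the substance of the argument is the same.
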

 
\begin{proof} Assuming that the decomposition is given,
 we know by Lemma \ref{stinkone} that $I=\sl_2(\ia)\times\sl_2(\ib)$ for some ideals $\ia,\ib\triangleleft R$. Similarly
 $J=\sl_2(\ic)\times\sl_2(\id)$ for some ideals $\ic,\id\triangleleft R$. But since $[I,J]=0$, then
 $[\sl_2(\ia),\sl_2(\ic)]=0$ and $[\sl_2(\ib),\sl_2(\id)]=0$ 
 which implies
$\ia\ic=0$ and $\ib\id=0$. From here it is easy to see that
$\hbox{ann}(\ia)=\ic$, $\hbox{ann}(\ic)=\ia$, $\hbox{ann}(\ib)=\id$ and $\hbox{ann}(\id)=\ib$.
On the other hand, $R=\ia\oplus\ic=\ib\oplus\id$. Also $0=\hbox{ann}(I)=\hbox{ann}(\ia)\cap\hbox{ann}(\ib)=\ic\cap\id$ and
similarly $0=\hbox{ann}(J)=\hbox{ann}(\ic)\cap\hbox{ann}(\id)=\ia\cap\ib$. Thus $\ic\id=0$ and $\ia\ib=0$ implying
$\ic\subset\hbox{ann}(\id)=\ib$ and $\ib\subset\hbox{ann}(\ia)=\ic$ hence $\ic=\ib$ and symmetrically $\ia=\id$.
So $I=\sl_2(\ia)\times\sl_2(\ib)$ and $J=\sl_2(\ib)\times\sl_2(\ia)$ being $R=\ia\oplus\ib$.
\end{proof}

\begin{re}\label{along}\rm
 Along the ideas in the proof of Theorem \ref{geo}, it is easy to prove the following result: If $R^2$ splits in form
 $R^2=I\oplus J$ where $I,J\triangleleft R^2$ such that $\hbox{ann}_R(I)=\hbox{ann}_R(J)=0$, then there are ideals $\ia,\ib\triangleleft R$ such that $R=\ia\oplus\ib$, 
 $I=\ia\times\ib$ and $J=\ib\times\ia$. The ideals $\ia$ and $\ib$ are necessarily unique.
\end{re}


\section{Lorentz type algebra over rings of characteristic other than $2$.}

In the first part of this section we consider an algebra $R$ in $\alg_\Phi$ such that $1/2\in R$ and also $\sqrt{-1}\in R$.
We have seen in Lemma \ref{one} that under these hypothesis,  the Lorentz type algebra over $R$ is isomorphic to $\o(4;R)$. 
We have proved also in Lemma \ref{lematres} that the $R$-algebra $\sl_2(R)$ is basically simple. We have also the following:

\begin{lm}\label{auxuno}
 Let $\frac{1}{2}\in R$ and $J\triangleleft \sl_2(R)$ an ideal. Assume $x\in\sl_2(R)$ satisfies $[x,\sl_2(R)]\subset J$.
 Then $x\in J$.
\end{lm}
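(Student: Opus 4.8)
The plan is to mimic the coordinate computation used in Lemma \ref{stinkone}. Write $x = r_1 h + r_2 e + r_3 f$ in the standard basis $\{h,e,f\}$ of the free $R$-module $\sl_2(R)$, with $[h,e]=2e$, $[h,f]=-2f$, $[e,f]=h$. The hypothesis $[x,\sl_2(R)]\subset J$ means in particular that the three brackets $[x,h]$, $[x,e]$, $[x,f]$ lie in $J$, and from these I can extract $r_1 h$, $r_2 e$, $r_3 f$ individually (up to the invertible scalar $2$) by taking a second bracket. This is exactly the trick already exploited in Lemma \ref{stinkone}, so the computation is genuinely routine.

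Concretely, first I would compute $[x,h] = -2r_2 e + 2r_3 f \in J$. Then, using that $J$ is an ideal, $[e,[x,h]] = [e,-2r_2 e + 2r_3 f] = 2r_3 [e,f] = 2r_3 h \in J$, and bracketing once more, $[[e,[x,h]],f] = 2r_3[h,f] = -4r_3 f \in J$; since $\tfrac12\in R$ this gives $r_3 f \in J$. Symmetrically, $[f,[x,h]] = -2r_2 [f,e] = 2r_2 h \in J$, whence $[h,[f,[x,h]]] = 2r_2[h,h]=0$ — so instead I bracket with $e$: $[[f,[x,h]],e] = 2r_2[h,e] = 4r_2 e \in J$, giving $r_2 e \in J$. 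Finally $[x,e] = r_1[h,e] + r_3[f,e] = 2r_1 e - r_3 h$; since we already know $r_3 f\in J$, compute $[[x,e],f]$ or simply note $[x,e] + r_3 h \cdot(\text{something})$ — more cleanly, $[x,e]=2r_1 e - r_3 h$ and $r_3 h \in J$ follows from $r_3 f\in J$ via $[e,r_3 f] = r_3 h$, so $2r_1 e = [x,e] + r_3 h \in J$ and hence $r_1 h \in J$ as well (using $[f, r_1 e]$-type brackets or the established equivalence $r_1 e\in J \Leftrightarrow r_1 h\in J$ from the proof of Lemma \ref{lematres}). Adding the three pieces, $x = r_1 h + r_2 e + r_3 f \in J$.

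There is no serious obstacle here; the only point requiring a little care is making sure that at each step the second bracket genuinely isolates a single coordinate rather than killing it (as the abortive bracket with $h$ above shows), so I would simply choose the bracketing partner ($e$ or $f$) that pairs nontrivially. The invertibility of $2$ in $R$ is what lets me conclude $r_i h\in J$ (resp. $r_i e, r_i f\in J$) from $4r_i h\in J$, etc. The statement and proof are essentially a cleaner restatement of the key internal step of Lemma \ref{stinkone}, isolated for reuse.
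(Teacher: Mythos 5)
Your proof is correct and follows essentially the same route as the paper's: expand $x=r_1h+r_2e+r_3f$, bracket with basis elements twice to isolate $r_3f$, $r_2e$, $r_1h$ in $J$, and use the invertibility of $2$ to remove the factors of $2$ or $4$. The only cosmetic difference is the exact choice of second bracketing partners (the paper stops at $2r_3h\in J$ and then pushes it to $r_3f\in J$, while you bracket once more directly), which changes nothing of substance.
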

\begin{proof}
 Let $x=r_1h+r_2e+r_3f$ where $\{h,e,f\}$ is the standard basis of the free $R$-algebra $\sl_2(R)$.
 Then $[x,h]=-2r_2e+2r_3f\in J$ and $J\ni [e,[x,h]]=2r_3h$ hence $r_3h\in J$ implying $r_3f\in J$.
 With similar arguments $r_2 e\in J$ and $r_1h\in J$. Consequently $x\in J$.
\end{proof}

The following result is standard over algebraically closed fields of characteristic other than $2$.

\begin{pr}\label{prtwo}
Let $R$ be an algebra with a square root of $-1$ and such that $1/2\in R$. Then for the
Lorentz type algebra $\L_R$ we have:
$$\L_R\cong \sl\nolimits_2(R)\oplus \sl\nolimits_2(R).$$
\end{pr}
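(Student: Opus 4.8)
The plan is to exhibit an explicit isomorphism $\L_R\to\sl_2(R)\oplus\sl_2(R)$ by diagonalizing a well-chosen inner derivation. By Lemma~\ref{one} it suffices to produce an isomorphism $\o(4;R)\cong\sl_2(R)\oplus\sl_2(R)$, and since $\iu=\sqrt{-1}\in R$ we have at our disposal the classical "Pauli-type" decomposition of $\so(4)$ into two copies of $\sl_2$. Concretely, working in the basis $C=\{a'_{ij}\}$ from the proof of Lemma~\ref{one} (equivalently, directly in the basis $\{b_i\}$ of $\L_R$ of Figure~\ref{audia4}), I would form the two triples of "self-dual" and "anti-self-dual" combinations
\[
L_k := \tfrac12\bigl(a'_{jk}\;?\;\text{etc.}\bigr),\qquad k=1,2,3,
\]
that is, the standard generators $J_k = \tfrac12\varepsilon_{klm}a'_{lm}$ and $K_k = a'_{1k}$ of $\so(4)$, and then set $A_k := \tfrac12(J_k + \iu K_k)$ and $B_k := \tfrac12(J_k - \iu K_k)$. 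A direct check against the multiplication table shows $[A_k,A_l] = \varepsilon_{klm}A_m$, $[B_k,B_l]=\varepsilon_{klm}B_m$ and $[A_k,B_l]=0$, so the $R$-span of $\{A_1,A_2,A_3\}$ and the $R$-span of $\{B_1,B_2,B_3\}$ are commuting ideals, each a copy of $\mathfrak{so}(3;R)$. Since $\tfrac12\in R$, the classical change of basis (pairing $A_1\pm\iu A_2$ with $A_3$) identifies $\mathfrak{so}(3;R)$ with $\sl_2(R)$ via the identities~\eqref{idsl}; this uses both $\iu$ and $\tfrac12$.

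The key steps, in order: (i) write down $A_k,B_k$ as explicit $R$-linear combinations of the $b_i$; (ii) verify the bracket relations above using Figure~\ref{audia4} — this is the routine computation I would not grind through but would assert carefully; (iii) note that $\{A_1,A_2,A_3,B_1,B_2,B_3\}$ is an $R$-basis of $\L_R$ because the change-of-basis matrix (involving only entries in $\{0,\pm1,\pm\tfrac12,\pm\tfrac{\iu}{2}\}$) has determinant a unit in $R$ — here is where $\tfrac12\in R$ and the invertibility of $\iu$ are essential; (iv) map $A_1\mapsto \iu h\otimes(1,0)$-type elements, more precisely send the span of the $A_k$ isomorphically onto the first $\sl_2(R)$ factor and the span of the $B_k$ onto the second, using the standard $\mathfrak{so}(3)\cong\sl_2$ dictionary; (v) conclude that the resulting $R$-linear bijection $\L_R\to\sl_2(R)\oplus\sl_2(R)$ is a Lie algebra homomorphism, hence an isomorphism.

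The main obstacle is purely bookkeeping: getting the signs and factors of $\tfrac12$ and $\iu$ consistent so that the two triples genuinely commute and genuinely close on themselves. This is where the hypotheses $\tfrac12\in R$ and $\sqrt{-1}\in R$ are used in an essential, non-negotiable way — over a field without $\sqrt{-1}$ the algebra is simple by Theorem~\ref{snss}, so no such splitting can exist, and in characteristic $2$ the construction collapses. An alternative, more conceptual route would be to invoke Lemma~\ref{one} to reduce to $\o(4;R)$ and then cite the exceptional isomorphism $\mathfrak{so}(4)\cong\mathfrak{so}(3)\oplus\mathfrak{so}(3)$ over any ring containing $\tfrac12$ together with $\mathfrak{so}(3;R)\cong\sl_2(R)$ when $\sqrt{-1}\in R$; but since the paper has set everything up in terms of explicit structure constants, I expect the cleaner write-up to be the direct one above, simply displaying the change of basis and checking it against Figure~\ref{audia4}.
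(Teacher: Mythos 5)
Your proposal is correct and is essentially the paper's own argument: the paper likewise reduces to $\o(4;R)$ via Lemma~\ref{one} and exhibits explicit commuting triples $\{h_\alpha,v_{\pm\alpha}\}$ and $\{h_\beta,v_{\pm\beta}\}$ --- self-dual/anti-self-dual combinations with coefficients $\pm\iu$, $\pm\tfrac12$, $\pm\tfrac{\iu}{2}$ --- satisfying the relations~(\ref{idsl}), so your passage through $\so(3;R)\oplus\so(3;R)$ merely performs the same change of basis in two stages. The only bookkeeping point to settle is the placement of $\iu$: with $K_k=a'_{1k}$ in the $\o(4;R)$-basis the split is $A_k=\tfrac12(J_k\pm K_k)$ with no $\iu$ (the $\iu$ then enters only in identifying each $\so(3;R)$ with $\sl_2(R)$), whereas $\tfrac12(J_k\pm\iu K_k)$ is the correct formula when $K_k$ are the symmetric boost generators $s_{1j}$ of the original basis of $\L_R$.
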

\begin{proof}
Consider the Lie algebra  $\sl_2(R)$ with its usual basis 
$\{h:=e_{11}-e_{22}, e:=e_{12}, f:=e_{21}\}$, which satisfies $[h,f]=-2f$, $[h,e]=2e$ and  $[e,f]=h$.
Now, for $i,j\in\{1,2,3,4\}$ with $i\ne j$,  
define in $\o(4)$ the elements 
\begin{align}
   h_\alpha  = -\iu \left(a_{1,2}+a_{3,4}\right), & & h_\beta=\iu \left(a_{1,2}-a_{3,4}\right),\nonumber\\
   v_\alpha  =\frac{1}{2}
   (a_{1,3}-a_{2,4})+\frac{\iu}{2}
   (a_{1,4}+a_{2,3}), & & v_\beta=\frac{1}{2}
   \left(-a_{1,3}-a_{2,4}\right)+\frac{\iu}{2}
   \left(-a_{1,4}+a_{2,3}\right),\nonumber\cr
   v_{-\alpha} =\frac{1}{2}
   \left(-a_{1,3}+a_{2,4}\right)+
   \frac{\iu}{2}
   \left(a_{1,4}+a_{2,3}\right), & & v_{-\beta}=\frac{1}{2}
   \left(a_{1,3}+a_{2,4}\right)+\frac{\iu}{2}
   \left(-a_{1,4}+a_{2,3}\right),
\end{align}
where $a_{i,j}:=e_{ij}-e_{ji}$. These elements verify
\begin{equation}
[h_\a,v_\a]=2v_\a,\  
 [h_\a,v_{-\a}]=-2v_{-\a},\ 
 [v_\a,v_{-\a}]=h_\a,
\end{equation}
that is to say, the identities (\ref{idsl}). The isomorphism is now clear:
\[
\begin{matrix} \alpha:\sl_2(R)&\longrightarrow & I=<h_{\alpha},v_{\alpha},v_{-\alpha}> \cr
h &\longmapsto & h_{\alpha}\cr e&\longmapsto &v_{\alpha}\cr f&\longmapsto &v_{-\alpha} \end{matrix}
\]
We do the same to prove that $J=\langle h_{\beta},v_{\beta},v_{-\beta}\rangle$ is also isomorphic to $\sl_2(R)$.
Thus, any of the $3$-dimensional $R$-submodules $\langle h_\a,v_\a,v_{-\a}\rangle$
and $\langle h_\b,v_\b,v_{-\b}\rangle$ is a subalgebra isomorphic to $\sl_2(R)$ and they satisfy $[x,y]=0$ for any
$x\in \langle h_\a,v_\a,v_{-\a}\rangle$ and any $y\in \langle h_\b,v_\b,v_{-\b}\rangle$. Thus, we have 
$\o(4)=I\oplus J$ and $I,J\triangleleft\ \o(4)$ (ideals of $\o(4)$). Moreover, $I\cong\sl_2(R)\cong J$ and so we have
proved the proposition. 
\end{proof}

\begin{re}\rm
For a complex vector space $V$ we are denoting by $V^\R$ its \lq\lq reallyfication\rq\rq, that is the underlying real vector space 
of $V$. If $A$ is a complex algebra, by $A^\R$ we denote the underlying real algebra of $A$. An easy observation is that simplicity of
the complex algebra $A$ implies simplicity of the real algebra $A^\R$. In a more general fashion assume that $R$ is an algebra in $\alg_\Phi$.
If $S$ is an algebra in $\alg_\Phi$ and $R$ a subalgebra of $S$, then for any $S$-algebra $V$ we will denote by $V^R$ the restriction of scalars
algebra of $V$. 
\end{re}\medskip

For any algebra $R$ in $\alg_\Phi$, we may construct $\bar R:=R\times R$ where the product in $\bar R$ is given by 
\begin{equation}\label{dupli}(x,y)(u,v):=(xu-yv,xv+yu).\end{equation}
We have a canonical monomorphism $R\to\bar R$
such that $r\mapsto (r,0)$. Regardless of the fact that $\sqrt{-1}\in R$ or not, in the new algebra $\bar R$ we always have $\sqrt{-1}\in\bar R$. 
Furthermore, if $\frac{1}{2}\in R$ then $\frac{1}{2}\in \bar R$.

\begin{lm}
If $\frac{1}{2}\in R$ the automorphism group $\aut_R(\bar R)$ is isomorphic to $\mu_2(R)$ (the group of order-two elements in $R$).
The isomorphism $\theta_R\colon\aut_R(\bar R)\cong\mu_2(R)$ may be chosen to be natural in $R$: for any homomorphism of $\Phi$-algebras
$f\colon R\to S$ there is a commutative diagram 
\[
\xygraph{
!{<0cm,0cm>;<1cm,0cm>:<0cm,1cm>::}
!{(0,0)}*+{\aut_R(\bar R)}="a"
!{(2,0)}*+{\mu_2(R)}="b"
!{(0,1.6)}*+{\aut_S(\bar S)}="c"
!{(2,1.6)}*+{\mu_2(S)}="d"
"a":_{\theta_R}"b"
"c":^{\theta_S}"d"
"a":^{f^*}"c"
"b":_{f_*}"d"
}
\]
where $f_*$ is the restriction of $f$ from $\mu_2(R)$ to $\mu_2(S)$ and $f^*$ maps any $R$-autom\-orphism $\a$ of $\bar R$ to 
the $S$-automorphism $\b$ of $\bar S$ such
that $\b(0,1)=(f(x),f(y))$ where $\a(0,1)=(x,y)$.
\end{lm}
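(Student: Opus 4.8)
The strategy is to construct $\theta_R$ explicitly from the action of an $R$-automorphism of $\bar R$ on the distinguished element $(0,1)$, which plays the role of $\sqrt{-1}$, and then to check naturality by chasing the square. First I would observe that $\bar R = R\times R$ as an $R$-module, so an $R$-linear endomorphism $\alpha$ is completely determined by $\alpha(1,0)$ and $\alpha(0,1)$; since an $R$-algebra automorphism must fix the unit $(1,0)$, it is determined by $\alpha(0,1)=(x,y)\in\bar R$. The element $(0,1)$ satisfies $(0,1)^2=(-1,0)$, so applying $\alpha$ forces $(x,y)^2=(-1,0)$, i.e. $x^2-y^2=-1$ and $2xy=0$. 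Using $\tfrac12\in R$ this gives $xy=0$; I then want to argue (localizing at the maximal ideals, or arguing idempotent by idempotent) that $x=0$, hence $y^2=1$, so $y\in\mu_2(R)$. Conversely every $y\in\mu_2(R)$ yields a well-defined $R$-algebra endomorphism of $\bar R$ sending $(0,1)\mapsto(0,y)$ (one checks it respects the twisted product \eqref{dupli}), and it is an automorphism since $y$ is a unit. This sets up the bijection $\theta_R\colon\alpha\mapsto y$, and it is a group homomorphism because composing two such automorphisms multiplies the corresponding square roots of $-1$, hence multiplies the corresponding elements of $\mu_2(R)$.

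The one genuinely delicate point is the claim that $x=0$ (equivalently, that there is no "exotic" $\sqrt{-1}$ in $\bar R$ beyond the $\pm(0,1)$ family twisted by $\mu_2(R)$). The equations $x^2-y^2=-1$, $xy=0$ do not force $x=0$ over an arbitrary ring: if $R$ has a nontrivial idempotent $e$ one could try $x = $ something supported on $e$. I would handle this by noting that $\alpha$ being bijective forces $\alpha(0,1)$ to generate $\bar R$ as an $R$-algebra together with $(1,0)$; more concretely, the matrix of $\alpha$ in the basis $\{(1,0),(0,1)\}$ is $\begin{pmatrix}1&x\\0&y\end{pmatrix}$, which is invertible over $R$ iff $y\in R^\times$, so $x$ is then free a priori. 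The resolution is that $\alpha$ must also be multiplicative: evaluating $\alpha((0,1)(0,1)) = \alpha(0,1)\alpha(0,1)$ against the twisted product shows $(x,y)(x,y) = (x^2-y^2,\,2xy) = (-1,0)$, and then — this is the step to be careful with — one observes that for $\alpha$ to be an automorphism its inverse must be $R$-linear and send $(0,1)$ to an element $(x',y')$ with $\alpha(x',y')=(0,1)$; writing this out gives $(x' + x'\cdot 0 - y'\cdot(\text{stuff}))$... cleaner: I would just impose that $\alpha$ fixes $R\cdot(1,0)$ pointwise and is an algebra map, compute $\alpha$ on a general $(a,b)=a(1,0)+b(0,1)$ as $(a,0)+(b,0)(x,y)=(a+bx,\,by)$, and then demand $\alpha$ be surjective: the image is $\{(a+bx,by):a,b\in R\}$, and since $y\in R^\times$ this is all of $\bar R$ regardless of $x$. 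So surjectivity alone does not pin down $x$; the pinning comes from requiring $\alpha$ to respect the product of two \emph{non-unit} elements, e.g. $\alpha\big((0,1)(0,1)\big)=\alpha(-1,0)=(-1,0)$ must equal $\alpha(0,1)^2=(x^2-y^2,2xy)$, giving $x^2=y^2-1$ and $xy=0$; and then testing $\alpha\big((0,1)(1,1)\big)$ versus $\alpha(0,1)\alpha(1,1)$ produces a further relation. I expect that after collecting these, the system over $R$ with $\tfrac12\in R$ collapses to $x=0$ (this is essentially the rigidity of the Gaussian-integers-type construction). If a clean short argument for $x=0$ is elusive, the safe fallback is to localize at each $\m\in\Max(R)$, where $R/\m$ is a field and the argument is classical, then patch.

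Finally, for naturality: given $f\colon R\to S$, the map $f^*$ sends $\alpha$ with $\alpha(0,1)=(x,y)$ to the $S$-automorphism $\beta$ of $\bar S$ with $\beta(0,1)=(f(x),f(y))$; by the previous paragraph $x=0$, so in fact $\alpha(0,1)=(0,y)$ with $y\in\mu_2(R)$ and $\beta(0,1)=(0,f(y))$. Then $\theta_S(f^*(\alpha)) = f(y) = f_*(\theta_R(\alpha))$, which is exactly commutativity of the square. Compatibility of $f^*$ with composition and identities (functoriality of the whole assignment $R\mapsto\aut_R(\bar R)$) is the routine check that $f^*$ is a group homomorphism, which follows since $\theta_S\circ f^* = f_*\circ\theta_R$ and $\theta_R,\theta_S$ are group isomorphisms while $f_*$ is a group homomorphism. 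I would write the proof in the order: (1) reduce an $R$-automorphism to the datum $\alpha(0,1)=(x,y)$; (2) derive $x=0$, $y^2=1$; (3) define $\theta_R$ and its inverse, check it is a group isomorphism; (4) check the naturality square; and I expect step (2) to be the only place requiring real care.
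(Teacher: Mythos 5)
You have the right construction, and your naturality check at the end is exactly what is needed; but the step you yourself single out as the delicate one is never actually closed, and the fallback you offer for it does not work as stated. From multiplicativity you correctly get that $\alpha(0,1)=(x,y)$ satisfies $x^2-y^2=-1$ and (using $\tfrac12\in R$) $xy=0$, and you correctly observe that the matrix of $\alpha$ in the basis $\{(1,0),(0,1)\}$ is $\begin{pmatrix}1&x\\0&y\end{pmatrix}$, so bijectivity of $\alpha$ forces its determinant $y$ to be a unit of $R$. At that point you declare that ``$x$ is then free a priori'' and go looking for further multiplicative relations (e.g.\ testing $\alpha\bigl((0,1)(1,1)\bigr)$); those give nothing new, since $\bar R\cong R[t]/(t^2+1)$ is generated over $R$ by $(0,1)$, so multiplicativity is already exhausted by $(x,y)^2=(-1,0)$. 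What you are missing is simply to combine the two facts you have already established: $y\in R^\times$ together with $xy=0$ gives $x=0$ at once, and then $-y^2=-1$ gives $y^2=1$. This is precisely the paper's argument, which obtains the invertibility of $y$ in an equivalent way: writing $\alpha^{-1}(0,1)=(c,d)$ and expanding $(0,1)=\alpha^{-1}(x,y)=(x,0)+y(c,d)$ yields $yd=1$, hence $y\in R^\times$, hence $x=0$ and $y^2=1$.

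The fallback you propose, ``localize at each $\m\in\Max(R)$, where $R/\m$ is a field and the argument is classical, then patch,'' conflates localization with reduction and, read as reduction modulo each maximal ideal, is genuinely insufficient: the field argument only shows that the image of $x$ in $R/\m$ vanishes for every $\m$, i.e.\ $x\in\rad(R)$, which is weaker than $x=0$ whenever $\rad(R)\neq 0$. (Honest localization at each $\m$ would succeed, but only because in each local ring you again invoke the invertibility of the determinant $y$ — the same global fact that settles the matter in one line with no patching.) So your proof is salvageable by the one-line combination above, and then it coincides with the paper's proof; but as written, step (2) of your plan — the heart of the lemma — is not established.
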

\begin{proof}
 Let $\a\in\aut_R(\bar R)$, then $\a(0,1)=(a,b)$ such that $a^2-b^2=-1$ and $ab=0$. Then $\alpha^{-1}(0,1)=(c,d)$ where $c^2-d^2=-1$ and $cd=0$. 
Since $\a^{-1}\a=1$ we have $(0,1)=\alpha^{-1}(a,b)=(a,0)+b(c,d)=(a+bc,bd)$. Thus $b,d\in R^\times$ and since $ab=0$, we have $a=0$ and $b^2=1$.
Thus any automorphism $\a$ acts in the form $\a(0,1)=(0,b)$ where $b^2=1$. So the map $\aut_R(\bar R)\to\mu_2(R)$ such that $\a\mapsto b$ is a
group isomorphism that we can denote by $\theta_R$. The assertion on the naturality of $\theta_R$ is straightforwad. 
 \end{proof}

 \begin{re}\label{langgrsch}\rm
  In the language of group schemes, the above lemma says that there is an isomorphism of affine group schemes 
  $\theta\colon\affaut(\bar\Phi)\cong \bm{\mu}_2$.
 \end{re}

\begin{lm}
If $\frac{1}{2}\in R$ the ideals of the $R$-algebra $\sl_2(\bar R)^R$ are exactly the ideals of the $\bar R$-algebra $\sl_2(\bar R)$.
\end{lm}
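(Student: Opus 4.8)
The plan is to prove both inclusions, the easy one being $\supseteq$: every ideal of the $\bar R$-algebra $\sl_2(\bar R)$ is in particular an $R$-submodule closed under the bracket, hence an ideal of the restriction-of-scalars algebra $\sl_2(\bar R)^R$. For the reverse inclusion I would first observe that an $R$-ideal $I$ of $\sl_2(\bar R)^R$ is automatically closed under brackets with every element of $\sl_2(\bar R)$, since the two algebras have the very same underlying Lie ring; so the only thing that still requires proof is that $I$ is a $\bar R$-submodule. Because $\bar R=R\oplus R\iu$ with $\iu^2=-1$, this reduces to showing that $\iu x\in I$ for every $x\in I$.

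The core of the argument is an extraction computation carried out over $\bar R$, in the spirit of Lemmas \ref{auxuno} and \ref{lematres}. Given $x\in I$, write $x=c_1h+c_2e+c_3f$ with $c_i\in\bar R$, where $\{h,e,f\}$ is the standard $\bar R$-basis of $\sl_2(\bar R)$ satisfying the identities $(\ref{idsl})$, and recall that $\frac12\in\bar R$. Bracketing $x$ with $h$ gives $-2c_2e+2c_3f\in I$; bracketing this element in turn with $e$ and with $f$ and halving yields $c_2h,c_3h\in I$. Then $[x,e]=2c_1e-c_3h\in I$ together with $c_3h\in I$ gives $c_1e\in I$, whence $c_1h=[c_1e,f]\in I$. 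Bracketing each $c_ih$ with $e$ and with $f$ and halving then produces $c_ie,c_if\in I$ for $i=1,2,3$, so all nine elements $c_ih,c_ie,c_if$ lie in $I$.

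Finally, to recover the imaginary parts I would use that $\iu h,\iu e,\iu f\in\sl_2(\bar R)$: from $c_ih\in I$ one gets $[c_ih,\iu e]=2\iu c_ie\in I$ and $[c_ih,\iu f]=-2\iu c_if\in I$, hence $\iu c_ie,\iu c_if\in I$, and then $\iu c_ih=[\iu c_ie,f]\in I$; summing over $i$ gives $\iu x=\iu c_1h+\iu c_2e+\iu c_3f\in I$, as desired. The one point to be careful about — and, I expect, the only real obstacle — is the bookkeeping observation that an ideal of $\sl_2(\bar R)^R$ may legitimately be bracketed against $\iu h,\iu e,\iu f$, which is exactly what makes the $\iu$-components accessible; in general $\iu\,\sl_2(R)$ is not even a subalgebra of $\sl_2(\bar R)^R$, so without bracketing against the imaginary copies the argument would stall.
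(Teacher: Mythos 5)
Your proposal is correct and follows essentially the same route as the paper: the only nontrivial inclusion is handled by showing $\iu I\subset I$, which both you and the paper obtain by bracketing elements of $I$ against $\iu$-scaled members of the standard basis $\{h,e,f\}$ of $\sl_2(\bar R)$ and using $\frac{1}{2}\in R$ to extract the components. Your write-up is in fact somewhat more complete than the paper's, which only displays the extraction of $\iu\gamma f$ and leaves the remaining components to an implicit ``similarly''.
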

\begin{proof} We have to prove that if $I\triangleleft\sl_2(\bar R)^R$ then $\iu I\subset I$ where
$\iu=\sqrt{-1}\in \bar R$. Take $x\in I$, then $x=\alpha h+\beta e+\gamma f$ for some $\alpha,\beta,\gamma\in\bar R$. 
So $I\ni [x,\iu h]=-2\beta\iu e+2\gamma\iu f$ and also $I\ni [e,[x,\iu h]]=2\gamma\iu h$ hence 
$I\ni [f,[e,[x,\iu h]]]=4\gamma\iu f$ and so $\iu\gamma f\in I$.
\end{proof}

\begin{theo}\label{really}
Let $R$ be an algebra in $\alg_\Phi$ with $\frac{1}{2}\in R$. 
Take as before $\bar R$ and consider $\sl_2(\bar R)$. Then $$\L_R\cong\sl\nolimits_2(\bar R)^R.$$
In particular, the (real) Lorentz algebra $\L_\R$ is isomorphic to the reallyfication of $\sl_2(\C)$.
\end{theo}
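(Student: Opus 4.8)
The plan is to build on the earlier results in a modular way. First, recall that Lemma \ref{one} and Proposition \ref{prtwo} already handle the case when $\sqrt{-1}\in R$: if the ground ring contains a square root of $-1$ and $\tfrac12$, then $\L_R\cong\o(4;R)\cong\sl_2(R)\oplus\sl_2(R)$. The trick to remove the hypothesis $\sqrt{-1}\in R$ is to pass to the ring $\bar R=R\times R$ with the twisted multiplication \eqref{dupli}, which is exactly the ``Gaussian double'' of $R$; as observed just before the statement, $\bar R$ always contains $\sqrt{-1}$ (namely $(0,1)$) and, if $\tfrac12\in R$, also contains $\tfrac12$. Hence Proposition \ref{prtwo} applies to $\bar R$ and gives an isomorphism of $\bar R$-Lie algebras $\L_{\bar R}\cong\sl_2(\bar R)\oplus\sl_2(\bar R)$. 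The remaining task is to relate $\L_R$ with $\L_{\bar R}$, and one side of the $\sl_2(\bar R)\oplus\sl_2(\bar R)$ with $\sl_2(\bar R)^R$.

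Next I would use the base-change identity recorded early in the paper: for the Lorentz functor, $\L_R\otimes_R S\cong\L_S$ whenever $S$ is an $R$-algebra, and in particular $\L_{\bar R}\cong\L_R\otimes_R\bar R$. Restricting scalars back to $R$, one sees that $\L_R\otimes_R\bar R$, viewed as an $R$-module, is just $\L_R\oplus\L_R$ (since $\bar R\cong R\oplus R$ as $R$-modules), and I claim the $R$-algebra structure one gets is precisely $\L_R\otimes_R\bar R$ with $\bar R$'s twisted product — but this is canonically isomorphic, as an $R$-algebra, to the double $\overline{\L_R}$ built from $\L_R$ by the same formula \eqref{dupli} applied componentwise to the bracket. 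Concretely, $(\L_{\bar R})^R$ decomposes as a direct sum of the two ideals coming from Proposition \ref{prtwo}, and the $R$-linear automorphism of $\bar R$ of order two (the ``conjugation'' $(x,y)\mapsto(x,-y)$, which by the penultimate lemma corresponds to the element $-1\in\mu_2(R)$) swaps those two ideals. So $(\L_{\bar R})^R\cong I\oplus J$ with $I\cong J\cong\sl_2(\bar R)^R$ as $R$-Lie algebras, and the swap shows the two summands are genuinely interchangeable.

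The heart of the matter is then to extract $\L_R$ itself as the fixed subalgebra, or more precisely to identify $\L_R$ with one of the two $R$-isomorphic pieces of $(\L_{\bar R})^R$. Here is the cleanest route: the conjugation $\sigma$ on $\bar R$ induces an $R$-linear involution $1\otimes\sigma$ on $\L_{\bar R}=\L_R\otimes_R\bar R$, whose fixed points are exactly $\L_R\otimes_R R=\L_R$ (using $\tfrac12\in R$ to split $\bar R=R\oplus R\iu$ into $\pm1$-eigenspaces of $\sigma$). On the other hand, transporting $\sigma$ through the isomorphism of Proposition \ref{prtwo} for $\bar R$, one checks by a direct computation on the explicit generators $h_\alpha,v_{\pm\alpha},h_\beta,v_{\pm\beta}$ that $1\otimes\sigma$ interchanges the two $\sl_2(\bar R)$-summands $I$ and $J$. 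Therefore the fixed subalgebra of $1\otimes\sigma$ is the ``diagonal'' $\{(z,\tau(z)):z\in\sl_2(\bar R)\}$, where $\tau$ is the isomorphism $I\to J$ determined by $\sigma$; this diagonal is manifestly $R$-isomorphic to $\sl_2(\bar R)^R$ via projection onto the first factor. Combining the two descriptions of the fixed subalgebra gives $\L_R\cong\sl_2(\bar R)^R$.

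The main obstacle I anticipate is the bookkeeping in identifying how $1\otimes\sigma$ acts after transport through Proposition \ref{prtwo}: one must verify on the level of the basis $a_{ij}$ (and the $\iu$-twisted elements $h_\alpha$, etc.) that conjugation on $\bar R$ really does swap the $\alpha$- and $\beta$-triples rather than, say, acting within each triple; the sign conventions in the definitions of $v_{\pm\alpha}$, $v_{\pm\beta}$ make this a place where an error is easy. Everything else — the base-change isomorphism, the eigenspace decomposition using $\tfrac12\in R$, the identification of a diagonal subalgebra with a single factor — is routine. Finally, specializing $R=\R$: then $\bar R=\C$ with its usual multiplication, so $\sl_2(\bar R)^R=\sl_2(\C)^\R$, and $\L_\R=\o(1,3)$ is the reallyfication of $\sl_2(\C)$, recovering the classical fact.
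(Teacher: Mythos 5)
Your argument is correct, but it proves the theorem by a genuinely different route than the paper. The paper's proof is a bare-hands computation entirely inside $\L_R$: it exhibits an explicit new basis $\{x_i\}_{i=1}^6$ of $\L_R$ (built from the generic matrix form of $\o(1,3)$) and checks that its multiplication table coincides with that of the basis $\{h,e,f,\iu h,\iu e,\iu f\}$ of $\sl_2(\bar R)^R$; no base change, no Proposition \ref{prtwo} over $\bar R$, no descent. Your proposal instead extends scalars to $\bar R$, applies Proposition \ref{prtwo} there, and recovers $\L_R$ as the fixed subalgebra of the $\sigma$-semilinear involution $1\otimes\sigma$ on $\L_{\bar R}=\L_R\otimes_R\bar R$, which (after transport) swaps the two $\sl_2(\bar R)$-summands, so that the fixed algebra is the twisted diagonal and projects $R$-isomorphically onto one summand, i.e.\ onto $\sl_2(\bar R)^R$. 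This is a clean Galois-descent argument that reuses the earlier structure theory and explains conceptually why a restriction of scalars appears; its price is exactly the bookkeeping you flag, and it is worth stressing that the pitfall is real: the naive entrywise conjugation on $\o(4;\bar R)$ (fix all $a_{ij}$, conjugate scalars) does \emph{not} swap the summands --- it sends $h_\alpha\mapsto -h_\alpha$ --- so one must transport $1\otimes\sigma$ through the $\iu$-twisted basis change of Lemma \ref{one}, under which the involution negates $a_{12},a_{13},a_{14}$, fixes the remaining $a_{ij}$, and conjugates scalars; with that correct involution one finds $h_\alpha\mapsto -h_\beta$, $v_\alpha\mapsto -v_{-\beta}$, $v_{-\alpha}\mapsto -v_\beta$, so the swap you need does hold and your fixed-point identification goes through (the eigenspace argument for $\mathrm{Fix}(1\otimes\sigma)=\L_R$ using $\frac12\in R$, and the projection of the twisted diagonal onto the first factor being a bracket-preserving $R$-isomorphism, are both fine). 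In short: the paper's proof is shorter and self-contained but pulls its basis out of a hat; yours is longer to write in full but modular and more illuminating, and both are valid.
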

\begin{proof} One of the square roots of $-1$ in $\bar R$ is $\iu=(0,1)$ and indeed $\bar R=R\oplus \iu R$ (as $R$-modules).
Then also $\sl_2(\bar R)=\sl_2(R)\oplus\iu \sl_2(R)$ as $R$-modules. Thus,
the system $\{h,e,f,\iu h,\iu e,\iu f\}$ is a basis of $\sl_2(\bar R)^R$.

We can define now a new basis in the Lie algebra $\L_R$, denoted by $ \mathcal{B}=\{x_i\}_{i=1}^6$, as follows:
\begin{eqnarray}
x_1:=-2(\alpha+\beta+w),\ x_2:=2(\alpha+v),\ x_3:=\alpha-u,\nonumber\\
x_4:=2(\alpha-u+v),\ x_5:=2(\beta+w),\ x_6:=\gamma+w,
\end{eqnarray}
where $\{\alpha,\beta,\gamma,u,v,w\}$ is the obvious basis obtained from the generic expression of an element in $\o(1,3)$: 
\begin{eqnarray}
\alpha=e_{12}+e_{21},\ \beta=e_{13}+e_{31},\ \gamma=e_{14}+e_{41},\nonumber \\
u=e_{23}-e_{32},\ v=e_{24}-e_{42},\ w=e_{34}-e_{43}.
\end{eqnarray}

We can verify now that denoting $h':=\iu h$, $e':=\iu e$, $f':=\iu f$, the multiplication table in $\sl_2(\bar R)^R$ is
\[
\begin{tabular}{|c|cccccc||}
\hline 
 & $h$ & $e$ & $f$ &  $h'$ & $e'$ & $f'$ \cr
\hline
$h$ & $0$ & $2e$ & $-2f$ & $0$ & $2e'$ & $-2f'$ \cr
$e$ & $-2e$ & 0 & $h$ &  $-2 e'$ & $0$ & $ h'$ \cr
$f$ & $2f$ & $-h $ & $0$ &  $2f'$ & $-h'$ & $0$ \cr
$h'$ & $0$ & $2e'$ & $-2f'$ & $0$ & $-2 e$ & $ 2 f$ \cr
$e'$ & $-2e'$ & $0$ & $h'$ & $ 2 e$ & $0$ & $-h $ \cr
$f'$ & $2f'$ & $-h'$ & $0$ & $-2f$ & $ h $ & $0$ \cr
\hline
\end{tabular}
\]
and the new one  in $\L_R$, with the new basis, is
\[
\begin{tabular}{|c|cccccc||}
\hline 
 & $x_1$ & $x_2$ & $x_3$ &  $x_4$ & $x_5$ & $x_6$ \cr
\hline
$x_1$ & $0$ & $2x_2$ & $-2x_3$ & $0$ & $2x_5$ & $-2x_6$ \cr
$x_2$ & $-2x_2$ & 0 & $x_1$ &  $-2 x_5$ & $0$ & $ x_4$ \cr
$x_3$ & $2x_3$ & $-x_1 $ & $0$ &  $2x_6$ & $-x_4$ & $0$ \cr
$x_4$ & $0$ & $2x_5$ & $-2x_6$ & $0$ & $-2 x_2$ & $ 2 x_3$ \cr
$x_5$ & $-2x_5$ & $0$ & $x_4$ & $ 2 x_2$ & $0$ & $-x_1 $ \cr
$x_6$ & $2x_6$ & $-x_4$ & $0$ & $-2x_3$ & $ x_1 $ & $0$ \cr
\hline
\end{tabular}
\]
It is clear then that  $\sl_2(\bar R)^R\cong \L_R$. 
\end{proof}

\section{Structure of Lorentz type algebras in characteristic two}
\label{secinco}

In order to study Lorentz type algebras in characteristic two we recall the functor $\o(3)\colon\alg_\Phi\to\lie_\Phi$
such that, for any algebra $R$ in $\alg_\Phi$, we define
$\o(3;R)$ as the free $R$-module  $\o(3;R)=R(e_{12}-e_{21})\oplus R(e_{13}-e_{31})\oplus R(e_{23}-e_{32})$ with the Lie algebra structure
induced by $[e_{12}-e_{21},e_{13}-e_{31}]=-(e_{23}-e_{32})$, $[e_{12}-e_{21},e_{23}-e_{32}]=e_{13}-e_{31}$ and
$[e_{13}-e_{31},e_{23}-e_{32}]=-(e_{12}-e_{21})$. We will call this, the $\o(3)$-type functor.
\begin{lm}
For a field $\K$ of characteristic $2$, the derivation algebra of $\o(3,\K)$
is isomorphic to the Lie algebra of symmetric $3\times 3$ matrices of zero trace with entries in $\K$.
Hence $\dim\der\o(3,\K)=5$. 
\end{lm}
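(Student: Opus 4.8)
The strategy is to compute $\der\o(3;\K)$ directly, using the $3$-dimensional presentation of $L:=\o(3;\K)$ given by the basis $f_1:=e_{12}-e_{21}$, $f_2:=e_{13}-e_{31}$, $f_3:=e_{23}-e_{32}$ with $[f_1,f_2]=-f_3$, $[f_1,f_3]=f_2$, $[f_2,f_3]=-f_1$ (all relations read modulo $2$, so in fact $[f_1,f_2]=f_3$, etc., but I will keep signs for bookkeeping). Since $L$ is $3$-dimensional, a derivation $D$ is given by a $3\times 3$ matrix $A=(a_{ij})$ over $\K$ via $D(f_j)=\sum_i a_{ij}f_i$, and the Leibniz rule applied to the three brackets $[f_i,f_j]$ yields a system of linear equations on the $a_{ij}$. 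First I would write out these nine scalar equations explicitly; the claim is that the solution space is exactly the space of symmetric trace-zero matrices, which is $5$-dimensional over $\K$.

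**Key steps.** (1) Recall that $\ad\colon L\to\der L$ has kernel the center $\z(L)$; in characteristic $2$ one checks that $L=\o(3;\K)$ is not simple — it has a nonzero center or at least $\ad$ has a kernel — so one must be careful, but in any case $\ad L$ sits inside $\der L$ and corresponds (under the identification above) to the antisymmetric matrices, which form a subspace of dimension $3$ (or less, if $\ad$ is not injective). (2) The main computation: impose the Leibniz conditions $D[f_i,f_j]=[Df_i,f_j]+[f_i,Df_j]$ for the three pairs and collect coefficients. A convenient way to organize this is to note that over any ring the map $f_i\mapsto e_i$ identifies $\o(3)$ with $(\K^3,\times)$ where $\times$ is the cross product (up to signs), and then a derivation $D$ satisfies $D(u\times v)=Du\times v+u\times Dv$; the well-known fact is that such $D$ correspond to the Lie algebra of the orthogonal group, i.e. to antisymmetric $A$, \emph{when $2$ is invertible}, because one derives $A+A^t=0$ — but in characteristic $2$ the condition $A+A^t=0$ becomes $A=A^t$, i.e. symmetry, and the trace condition appears from the $D(u\times v)$ identity applied with $u=v$ or from comparing the three diagonal relations. (3) Solve the resulting system to get exactly $\{A : A=A^t,\ \operatorname{tr}A=0\}$, of dimension $\binom{3+1}{2}-1 = 6-1 = 5$, and identify this space with $\so(3;\K)$ in its symmetric-matrix guise, or more precisely with the symmetric trace-zero matrices; conclude $\dim\der\o(3;\K)=5$.

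**Main obstacle.** The delicate point is the characteristic-$2$ bookkeeping: all the sign cancellations that in odd characteristic force antisymmetry instead force symmetry here, and one must verify that the trace-zero condition genuinely drops out of the equations rather than being an artifact — i.e. that there is no extra constraint and no extra freedom. I would handle this by carrying out the nine-equation computation carefully once, with $1+1=0$ imposed from the start, rather than specializing a characteristic-zero answer. A secondary subtlety is checking that the stated target Lie algebra (symmetric $3\times3$ trace-zero matrices with the \emph{commutator} bracket) is actually closed under bracket in characteristic $2$ — which it is, since for symmetric $A,B$ one has $(AB-BA)^t = B^tA^t - A^tB^t = BA - AB = -(AB-BA) = AB-BA$ in characteristic $2$, and $\operatorname{tr}(AB-BA)=0$ always — so the isomorphism claim is at least well-posed; matching the bracket on the nose is then a short verification once the vector-space identification from step (2) is in hand.
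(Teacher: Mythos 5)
Your proposal is correct and takes essentially the same route as the paper: fix the basis $b_1,b_2,b_3$ of $\o(3;\K)$ with $[b_i,b_j]=b_k$ cyclically (all signs vanish in characteristic $2$), write $d(b_i)=\sum_j x_{ij}b_j$, and the Leibniz identities on the three brackets give exactly $x_{ij}=x_{ji}$ for $i\ne j$ and $\sum_i x_{ii}=0$, i.e.\ the symmetric trace-zero matrices, of dimension $5$; your cross-product reformulation is just this same computation in different notation, and your closure check makes the isomorphism claim well-posed. One parenthetical remark in your step (1) is wrong, though not load-bearing: in characteristic $2$ the algebra $\o(3;\K)$ \emph{is} simple with trivial center (so $\ad$ is injective, its image being the symmetric matrices with zero diagonal), a fact the paper itself uses later when showing $\L_\K/I\cong\o(3;\K)$ is simple.
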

\begin{proof}
The basis $b_1:=e_{12}+e_{21}$, $b_2=e_{13}+e_{31}$ and $b_3=e_{23}+e_{32}$ of $\o(3,\K)$ multiplies according to the rule
$[b_i,b_j]=b_k$ (where $i,j,k\in\{1,2,3\}$ cyclically). 
Taking $d\in\der\o(3,\K)$ and writing $d(b_i)=\sum x_{ij}b_j$, the equations
$d([b_i,b_j])=[d(b_i),b_j]+[b_i,d(b_j)]$ give: $$x_{ij}=x_{ji}, (i\ne j), \sum x_{ii}=0.$$
\end{proof}
\begin{pr}\label{hideput}
We consider now the Lorentz algebra ${\mathfrak L}:=\L_\K$ over 
 fields $\K$ of characteristic two. 
Then ${\mathfrak L}$ is not simple: it has a three dimensional ideal $I$ which is minimal and maximal, satisfies $[I,I]=0$
and $\mathfrak{L}/I\cong\o(3;\K)$ which is a simple Lie algebra. This ideal is unique.
\end{pr}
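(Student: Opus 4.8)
The plan is to write down $I$ explicitly, check by a short computation in the multiplication table of Figure~\ref{audia4} (read modulo $2$, so that all the $\pm$ signs disappear) that it is a three–dimensional abelian ideal with quotient $\o(3;\K)$, and then obtain minimality, maximality and uniqueness for free from the facts that $\mathfrak L$ is perfect and centreless and that $I$ is irreducible as an $\mathfrak L$–module.

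Concretely, set $p:=b_1+b_6$, $q:=b_2+b_5$, $r:=b_3+b_4$ and put $I:=\K p+\K q+\K r$. Using Figure~\ref{audia4} with the signs ignored one checks $[p,q]=[p,r]=[q,r]=0$, so $[I,I]=0$; and for each basis vector $b_i$ one finds $[p,b_i],[q,b_i],[r,b_i]\in\{0,p,q,r\}$, so $I$ is an ideal. (The bookkeeping can be shortened: $\mathfrak L=\o(4;\K)$ in characteristic two, the symmetric group $S_4$ acts on it by permuting the matrix indices, and this action permutes $p,q,r$ — they correspond to the three partitions of $\{1,2,3,4\}$ into two pairs — so it suffices to verify the ideal condition for $p$.) Modulo $I$ one has $\bar b_6=\bar b_1$, $\bar b_5=\bar b_2$, $\bar b_4=\bar b_3$, and the induced brackets are $[\bar b_1,\bar b_2]=\bar b_3$, $[\bar b_1,\bar b_3]=\bar b_2$, $[\bar b_2,\bar b_3]=\bar b_1$, which are exactly the defining relations of $\o(3;\K)$ (in its basis $e_{12}-e_{21}$, $e_{13}-e_{31}$, $e_{23}-e_{32}$, again after reducing mod $2$); hence $\mathfrak L/I\cong\o(3;\K)$. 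That $\o(3;\K)$ is simple is checked separately: it is perfect and has trivial centre, so it has no one–dimensional ideal, and therefore — perfectness again — no two–dimensional one either.

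I would then record, by the same kind of direct computation, the two facts $[\mathfrak L,\mathfrak L]=\mathfrak L$ (each $b_i$ is a bracket of two of the others) and $\z(\mathfrak L)=0$. Since $[I,I]=0$, the adjoint action makes $I$ a module over $\mathfrak L/I\cong\o(3;\K)$, and I claim it is irreducible: a one–dimensional submodule would produce a Lie homomorphism $\mathfrak L\to\K$, which is zero by perfectness, so the spanning vector would lie in $\z(\mathfrak L)=0$; and a submodule $W$ of codimension one would force $\mathfrak L$ to act trivially on $I/W$, i.e. $[\mathfrak L,I]\subseteq W$, which is impossible because $[\mathfrak L,I]=I$ (one more one–line check, e.g. $p=[r,b_2]$, $q=[p,b_3]$, $r=[p,b_2]$). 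Irreducibility of the module $I$ says at once that $I$ is a minimal ideal of $\mathfrak L$ (an ideal contained in $I$ is in particular a submodule), and the simplicity of $\mathfrak L/I$ says $I$ is a maximal ideal.

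For uniqueness, let $J$ be any proper nonzero ideal of $\mathfrak L$. If $J\subseteq I$ then $J$ is a submodule of the irreducible module $I$, so $J=I$. Otherwise $J+I$ strictly contains the maximal ideal $I$, hence $J+I=\mathfrak L$; then $J\cap I$ is a submodule of $I$, and it cannot be $I$ (that would give $J\supseteq I$ and $J=\mathfrak L$), so $J\cap I=0$ and $\mathfrak L=J\oplus I$ as a direct sum of ideals. But then $[\mathfrak L,I]=[J,I]+[I,I]\subseteq J\cap I=0$, so $I\subseteq\z(\mathfrak L)=0$, a contradiction; thus $J=I$, and in fact $I$ is the \emph{only} proper nonzero ideal of $\mathfrak L$. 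The one genuine point of care in all of this is getting the characteristic–two multiplication table and the correct three generators of $I$ right; everything after that is a formal consequence of perfectness, $\z(\mathfrak L)=0$ and the irreducibility of $I$.
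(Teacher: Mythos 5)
Your construction of $I$ and the identification $\mathfrak L/I\cong\o(3;\K)$ are exactly the paper's (the same elements $b_1+b_6$, $b_2+b_5$, $b_3+b_4$, and the same identification $\bar b_i\mapsto e_{i\,i+1}+e_{i+1\,i}$-type change of basis in the quotient), and your computations check out against the characteristic-two table. Where you genuinely diverge is in how minimality and uniqueness are obtained. The paper proves minimality by hand: for an arbitrary nonzero $g=\sum\lambda_i x_i\in I$ it exhibits brackets such as $[[g,b_1],b_3]=\lambda_3 x_1$ to show that the ideal generated by $g$ is all of $I$; and its uniqueness argument only treats a second ideal $J$ \emph{with the same properties} (in particular $[J,J]=0$), deducing $\mathfrak L=I\oplus J$ and the contradiction $[\mathfrak L,\mathfrak L]=0$. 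You instead package minimality as irreducibility of $I$ as an $\mathfrak L$-module, excluding submodules of dimension $1$ and $2$ via perfectness, $\z(\mathfrak L)=0$ and $[\mathfrak L,I]=I$, and you then obtain the stronger conclusion that $I$ is the \emph{only} proper nonzero ideal of $\mathfrak L$, which subsumes the paper's uniqueness claim without assuming anything about the competitor $J$. You also supply a short proof that $\o(3;\K)$ is simple (perfect, centreless, dimension $3$), a fact the paper asserts but does not prove in this proposition. The trade-off is clear: the paper's route is purely computational and self-contained at the level of the multiplication table, while yours requires the extra (easily verified) facts that $\mathfrak L$ is perfect and centreless in characteristic two, but in exchange it is more structural and yields the complete ideal lattice of $\mathfrak L$ in one stroke.
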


\begin{proof}
We consider the system $\{e_{ij}+e_{ji}\}\text{ such that } 1\leq i\neq j \leq 4$,   basis of $\mathfrak{L}$ and denote 
by $b_1:=s_{12}:=e_{12}+e_{21}$, $b_2:=s_{13}:=e_{13}+e_{31}$, $\ldots, b_6:=s_{34}:=e_{34}+e_{43}$. 
We take $I=\langle x_1,x_2,x_3\rangle$, where $x_1=b_1+b_6$, $x_2=b_2+b_5$ and $x_3=b_3+b_4$. 
It is easy to check that $I$ satisfies the following conditions:
\begin{itemize}
\item $[I,I]=0$. 
\item $[I,\mathfrak{L}]\subset I$
\end{itemize}
Then $I$ is an ideal of $\mathfrak{L}$.  To prove that $I$ is minimal we will prove that the ideal generated by any
element $g\in I$ is $I$ itself. This is trivial if $g=x_1$, $x_2$ or $x_3$.
So we take a generic element $0\ne g=\sum_i\lambda_i x_i\in I$. Denote the ideal generated by $g$ as $(g)$.
We have $[[g,b_1],b_3]=\lambda_3 x_1$.
 So if $\lambda_3\ne 0$ we have $x_1\in (g)$ and therefore
$I=(x_1)\subset(g)$ hence $(g)=I$. If on the contrary $\lambda_3=0$, then we consider the relation
$[[g,b_1],b_2]=\lambda_2 x_1$. Thus, if $\lambda_2\ne 0$ we have $x_1\in (g)$ and again $I=(x_1)\subset(g)$ implying
$(g)=I$. If $\lambda_2=0$, then $g=\lambda_1x_1\ne 0$, and so $(x_1)=(g)=I$. Summarizing: the ideal generated by any nonzero
element in $I$ is $I$ itself. So $I$ is minimal. Let us prove now that $I$ is also maximal. For this, we will
prove that the quotient algebra $\mathfrak{L}/I$ is simple. Let us denote by $\bar x$ the class of $x\in\mathfrak{L}$ module $I$.
Then a basis of the quotient algebra is $\{\bar b_1,\bar b_2,\bar b_3\}$ and the multiplication table of this algebra
is 
\[
 \begin{tabular}{|c|ccc||}
 \hline
 & $\stackrel{\ }{\bar b_1}$  & $\bar b_2$ & $\bar b_3$\cr
   \hline
   $\stackrel{\ }{\bar b_1}$  & $0$  & $\bar b_3$ & $\bar b_2$ \cr
   $\bar b_2$  &      & $0$  & $\bar b_1$ \cr
   $\bar b_3$  &      &  & $0$\cr
   \hline
 \end{tabular}
\]
On the other hand, the Lie algebra $\o(3;\K)$, as a vector space is the three-dimensional span
$\o(3;\K)=\langle e_{12}+e_{21},e_{13}+e_{31},e_{23}+e_{32}\rangle$ and if we make the multiplication table
of $\o(3;\K)$ relative to the specified basis we will see immediately the isomorphism $\mathfrak{L}/I\cong\o(3;\K)$.
Since $\o(3;\K)$ is simple we have also the maximality of the ideal $I$. To prove that $I$ is unique, assume that
$J$ is a different ideal satisfying the same properties as $I$. Then $J$ is also minimal and maximal, so that 
 $\L=I\oplus J$. Consequently 
$$[\L,\L]=[I+J,I+J]=[I,I]+[I,J]+[J,J]=0$$ which is a contradiction.
\end{proof}

\section{Lorentz algebra over finite fields}


In this section we investigate the simplicity of $\o(1,3)$ over a finite field $\F_q$.
Specifically, we consider the Lie algebra
$$L:=\L_{\F_q}=\left\{
{\begin{pmatrix} 
0 & x & y & z\cr
x & 0 & s & t\cr
y & -s & 0 & u\cr
z & -t & -u & 0
\end{pmatrix}}\colon x,y,z,s,t,u\in\F_q\right\}.$$
We know that $q$ must be a power $q=p^n$ of some prime number $p$ (and $n>0$).
Since we have already studied the Lorentz type algebra over fields of characteristic $2$ we assume
that $p$ is odd. By Theorem \ref{snss} the simplicity of $L$ is equivalent to the fact that the field $\F_q$ has no square root of $-1$.
\medskip

We start this subsection investigating under what conditions a finite field $\F_q$ (of characteristic other than $2$) has a square root
of $-1$. This is included here only for selfcontainedness reasons.

\begin{lm}\label{abuno}
 If $q$ is a positive integer,  the factorization 
 $x^q-x=(x^2+1)(x^{q-2}-x^{q-4}+\cdots+x^3-x)$ is only possible when $q$ is of the form $q=4n+1$.
\end{lm}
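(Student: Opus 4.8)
The plan is to treat this as an elementary polynomial-identity computation and then check when the identity forces $q=4n+1$. First I would expand the right-hand side $(x^2+1)(x^{q-2}-x^{q-4}+\cdots+x^3-x)$ as a telescoping sum: writing the second factor as $\sum_{j} (-1)^{j}x^{q-2-2j}$ over the appropriate range, multiplication by $x^2+1$ makes consecutive terms cancel, leaving only the top term $x^{q}$ and the bottom term $\pm x$. So the product equals $x^{q}-x$ precisely when the alternating signs are arranged so that the lowest-degree term is $-x$ (not $+x$) and the exponents actually reach down to $x^3-x$ as written; this is a parity condition on the number of terms.

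Next I would make that parity condition explicit. The exponents appearing in the second factor are $q-2, q-4, \ldots, 3, 1$, which are the odd integers from $1$ to $q-2$; for these to be the stated pattern we need $q-2$ odd, i.e. $q$ odd, and we need the sign attached to $x^{1}$ to be $-1$. Counting: the term $x^{q-2-2j}$ has sign $(-1)^{j}$, and it equals $x^{1}$ when $j=(q-3)/2$, so the bottom sign is $(-1)^{(q-3)/2}$, which must be $-1$; hence $(q-3)/2$ is odd, i.e. $q-3\equiv 2\pmod 4$, i.e. $q\equiv 1\pmod 4$. Conversely, when $q\equiv 1\pmod 4$ the telescoping leaves exactly $x^{q}-x$, so the factorization holds. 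I would also note the degenerate bookkeeping at small $q$ (the cases $q=1,3,5$) just to confirm the stated form $x^{3}-x$ really appears for $q\ge 5$ and the statement is vacuous or trivially checked otherwise.

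I do not expect a genuine obstacle here — the only thing requiring care is the off-by-one/parity bookkeeping in the telescoping sum, making sure the sign of the final $x$ term and the "$\cdots+x^{3}-x$" pattern are matched correctly to the congruence class of $q$ modulo $4$. Concretely I would write $x^{q}-x = x(x^{q-1}-1)$ and, assuming $q$ odd, factor $x^{q-1}-1=(x^{2})^{(q-1)/2}-1=(x^{2}+1)\big((x^{2})^{(q-1)/2-1}-(x^{2})^{(q-1)/2-2}+\cdots\big)$ only when $(q-1)/2$ is even, i.e. $q\equiv 1\pmod 4$, since $t^{m}-1$ is divisible by $t+1$ exactly when $m$ is even; multiplying back by the outer $x$ reproduces the displayed factor and confirms the claim.
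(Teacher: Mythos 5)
Your proof is correct and follows essentially the same route as the paper: both come down to parity bookkeeping on the alternating quotient, where you force the sign of the bottom term $x$ to be $(-1)^{(q-3)/2}=-1$ while the paper writes the quotient as $\sum_{k\ge 1}(-1)^{k+1}x^{q-2k}$ and forces $(-1)^{k+1}x^{q-2k}=-x$, giving $k=2n$ and $q-2k=1$, hence $q=4n+1$, with the same telescoping check for the converse. Your closing reformulation $x^q-x=x\bigl((x^2)^{(q-1)/2}-1\bigr)$ combined with the fact that $t+1\mid t^m-1$ exactly when $m$ is even packages the same computation a bit more cleanly, but it is not a genuinely different argument.
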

\begin{proof}
If $q=4n+1$ we can define the polynomial 
$$\sum_{k=1}^{2n} (-1)^{k+1} x^{q-2k}=x^{q-2}-x^{q-4}+\cdots+x^3-x$$
and the factorization holds (observe that we have used the fact that $q$ is of the form $4n+1$).
On the other hand, if $x^q-x$ is divisible by $x^2+1$, then the quotient is 
$x^{q-2}-x^{q-4}+\cdots +x^3-x$, so that the different summands are $(-1)^{k+1}x^{q-2k}$.
Equating $(-1)^{k+1}x^{q-2k}=-x$ we get that $k$ must be an even number $k=2n$ and $q-2k=1$. Hence $q=4n+1$. 
\end{proof}

\begin{pr}
Let $q=p^n$ where $p$ is an odd prime number, then $\F_q$ contains a square root of $-1$ if and only if $q$ is of the form $q=4n+1$.
\end{pr}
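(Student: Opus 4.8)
The statement to prove is: for $q = p^n$ with $p$ an odd prime, the field $\F_q$ contains a square root of $-1$ if and only if $q$ is of the form $4k+1$ (I write $k$ rather than $n$ to avoid a clash with the exponent in $q = p^n$). The plan is to argue purely via the cyclic structure of the multiplicative group $\F_q^\times$, which has order $q-1$, and to reconcile this with the polynomial factorization in Lemma \ref{abuno}.

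First I would observe that $-1$ is the unique element of order $2$ in the cyclic group $\F_q^\times$ (it has order $2$ since $\operatorname{char}\K\ne 2$ forces $-1 \ne 1$; uniqueness is because a cyclic group has at most one subgroup of each order). An element $a \in \F_q^\times$ satisfies $a^2 = -1$ if and only if $a$ has order $4$ in $\F_q^\times$. Now a cyclic group of order $q-1$ contains an element of order $4$ if and only if $4 \mid q-1$, i.e. $q \equiv 1 \pmod 4$. This gives the equivalence directly. For the forward direction, if $\sqrt{-1}=a\in\F_q$ then $a$ has order $4$, so $4\mid q-1$ by Lagrange, so $q = 4k+1$. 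For the converse, if $q = 4k+1$ then the cyclic group $\F_q^\times$ has a (unique) subgroup of order $4$, whose generator $a$ satisfies $a^2 = -1$ since $a^2$ is the unique element of order $2$.

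Alternatively — and this is presumably the route the authors intend, given that Lemma \ref{abuno} was just proved — one can phrase it through the factorization of $x^q - x$. Every element of $\F_q$ is a root of $x^q - x$, and this polynomial splits into distinct linear factors over $\F_q$. Thus $-1$ has a square root in $\F_q$ precisely when $x^2+1$ divides $x^q-x$ in $\F_q[x]$ (equivalently, the two roots of $x^2+1$ lie in $\F_q$); and $x^2+1$ has distinct roots since $p$ is odd, so divisibility of $x^q-x$ by $x^2+1$ over $\F_q$ is equivalent to both roots of $x^2+1$ being in $\F_q$. By Lemma \ref{abuno}, such a divisibility forces $q \equiv 1 \pmod 4$; conversely, when $q = 4k+1$ the lemma exhibits the explicit quotient, so $x^2+1 \mid x^q-x$, hence its roots lie in $\F_q$, i.e. $\sqrt{-1}\in\F_q$.

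I do not expect any serious obstacle here: both arguments are short and standard. The only point requiring a little care is the converse in the cyclic-group approach — making sure that the element of order $4$ squares to $-1$ rather than to some other element — but this is immediate from the uniqueness of the order-$2$ element. If the factorization route is preferred, the one subtlety is to note that $x^q-x$ is separable (so divisibility by $x^2+1$ is genuinely equivalent to containment of the roots), which holds because $\gcd(x^q-x, qx^{q-1}-1) = 1$ when $p\nmid q$, i.e. always. Either way the proof is a few lines.
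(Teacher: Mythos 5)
Your proposal is correct, and it actually contains the paper's proof as a sub-case while also offering a cleaner alternative. The paper's argument is a hybrid of your two routes: for ``$q\equiv 1\pmod 4$ implies $\sqrt{-1}\in\F_q$'' it uses exactly your second route (the factorization of Lemma \ref{abuno} together with the fact that $x^q-x$ splits over $\F_q$), and for the converse it uses exactly the Lagrange step from your first route ($\sqrt{-1}$ has order $4$ in $\F_q^\times$, so $4\mid q-1$). The genuinely new ingredient in your write-up is the converse of your first route: using cyclicity of $\F_q^\times$ to produce an element $a$ of order $4$ when $4\mid q-1$ and observing that $a^2$ must be the unique involution $-1$. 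That argument makes Lemma \ref{abuno} unnecessary and is arguably the most economical proof, at the price of invoking cyclicity of the multiplicative group (which the paper never uses). One small caveat on your second route: when you deduce ``$\sqrt{-1}\in\F_q$ forces $q\equiv 1\pmod 4$'' from Lemma \ref{abuno}, you are applying that lemma with coefficients in $\F_q$ rather than in $\Z$; the argument survives because $x^q\equiv(-1)^{(q-1)/2}x\pmod{x^2+1}$, and $-1\ne 1$ in odd characteristic forces $(q-1)/2$ to be even, but this deserves an explicit sentence since the lemma is stated as a formal polynomial identity. Your separability remark for $x^q-x$ is correct and handles the only other delicate point.
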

\begin{proof}
 Recall that $\F_q$ is the splitting field of $x^q-x$ over $\Z_p$. Thus all the elements in $\F_q$ satisfy $x^q-x=0$.
If $q=4n+1$ the factorization in Lemma \ref{abuno} holds and so there is a square root of $-1$ in the field.
Reciprocally, if $\sqrt{-1}\in\F_q$ then there is an element $q\in\F_q^\times$ of order $4$ (of course $q=\sqrt{-1}$).
Therefore the order of the group $\F_q^\times$ is a multiple of $4$. But this order is $q-1$ whence $q$ is of the form $4n+1$.
\end{proof}

\begin{co}
The Lorentz type algebra $\L_{\Z_p}$ over the field $\Z_p$ is simple if and only if $p$ is odd and of the form $p=4k+3$.
\end{co}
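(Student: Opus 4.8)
The plan is to simply combine the two results that immediately precede this corollary in the paper. The field $\Z_p = \Z/p\Z$ has characteristic $p$; if $p = 2$ we already know from the remark after Theorem~\ref{snss} that $\L_{\Z_p}$ is not simple, so we may assume $p$ is odd and invoke Theorem~\ref{snss}, which says that the Lorentz type algebra $\L_\K$ over a field $\K$ is simple if and only if $\K$ is $2$-formally real, equivalently if and only if $\sqrt{-1}\notin\K$. Thus the question reduces entirely to: when does $\Z_p$ fail to contain a square root of $-1$?

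Next I would apply the proposition just proved, with $n = 1$ so that $q = p$: the finite field $\Z_p = \F_p$ contains a square root of $-1$ if and only if $p$ is of the form $p = 4n+1$. Since $p$ is an odd prime, it is either of the form $4k+1$ or $4k+3$, and these are mutually exclusive. Hence $\sqrt{-1}\notin\Z_p$ precisely when $p$ is odd of the form $p = 4k+3$. Chaining the two equivalences gives: $\L_{\Z_p}$ is simple $\iff$ $\Z_p$ is $2$-formally real $\iff$ $\sqrt{-1}\notin\Z_p$ $\iff$ $p$ is odd and $p \equiv 3 \pmod 4$, which is the claim.

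I do not anticipate any genuine obstacle here, since the corollary is a direct specialization of Theorem~\ref{snss} and the preceding proposition to the case $R = \K = \Z_p$. The only small point to be careful about is the edge case $p = 2$: there one uses the observation that every field of characteristic two contains $\sqrt{-1} = 1$ (as recorded in the remark following Theorem~\ref{snss}), so $\L_{\Z_2}$ is not simple, and this is consistent with the statement since $2$ is not of the form $4k+3$ and is not odd. So the proof is essentially a one-line citation of the two previous results, and the write-up should be correspondingly brief.
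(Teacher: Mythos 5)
Your proof is correct and is exactly the intended argument: the paper states this as an immediate corollary of Theorem \ref{snss} together with the preceding proposition on square roots of $-1$ in $\F_q$ (specialized to $q=p$), with the characteristic-two case handled by the remark that $\sqrt{-1}=1$ there. Nothing further is needed.
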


Finally we investigate when is an odd prime power $p^n$ of the form $4n+1$.

\begin{pr}
 Let $p$ be an odd prime number. Then:
 \begin{itemize}
  \item If $p=4k+1$, then $p^n$ is also of the form $4m+1$.
  \item If $p=4k+3$, then $p^n$ is of the form $4m+1$ if and only if $n$ is even.
 \end{itemize}
\end{pr}

\begin{proof}
Define $A$ to be the positive integers of the form $4n+1$ and $B$ those of the form $4n+3$.
Since $A$ is closed under multiplication the first assertion is trivial.
On the other hand $B B\subset A$ and $A B\subset B$ hence multiplying an even number of elements
of $B$ we get an element of $A$: $$\overbrace{B \cdots B}^{2k} \subset A,\quad\text{ and }\quad \overbrace{B \cdots B}^{2k+1} \subset B.
$$
\end{proof}

\begin{co}
 Consider the Lorentz type algebra $\L_{\F_q}$ over a finite field $\F_q$ where $q=p^n$ and $p$ is odd. Then $\L_{\F_q}$ is simple 
 if and only if $n$ is odd and $p$ of the form $p=4k+3$.
\end{co}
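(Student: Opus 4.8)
The plan is simply to chain together the three results that immediately precede the statement. First I would invoke Theorem \ref{snss}: the Lorentz type algebra $\L_{\F_q}$ over the field $\F_q$ is simple if and only if $\F_q$ is $2$-formally real, equivalently if and only if $\sqrt{-1}\notin\F_q$. This reduces the whole question to deciding, among prime powers $q=p^n$ with $p$ odd, for which ones $\F_q$ fails to contain a square root of $-1$.

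Next I would apply the proposition characterizing this: $\F_q$ contains a square root of $-1$ exactly when $q\equiv 1\pmod 4$, i.e. $q=4m+1$. Since $q=p^n$ is odd it is congruent to either $1$ or $3$ modulo $4$, so "$q$ is not of the form $4m+1$" is the same as "$q\equiv 3\pmod 4$". Hence $\L_{\F_q}$ is simple if and only if $p^n\equiv 3\pmod 4$. Finally I would use the last proposition on odd prime powers mod $4$: if $p\equiv 1\pmod 4$ then $p^n\equiv 1\pmod 4$ for every $n$, so $\sqrt{-1}\in\F_q$ always and $\L_{\F_q}$ is never simple; if $p=4k+3$ then $p^n\equiv 1\pmod 4$ exactly when $n$ is even, hence $p^n\equiv 3\pmod 4$ — equivalently $\L_{\F_q}$ simple — exactly when $n$ is odd. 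Combining the two cases yields the stated equivalence: $\L_{\F_q}$ is simple if and only if $p=4k+3$ and $n$ is odd.

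There is no real obstacle here, since all the substantive content (the reduction of simplicity to the absence of $\sqrt{-1}$, the finite-field computation, and the multiplicativity of congruence classes modulo $4$) has already been established in the cited results; the only point requiring a moment's care is the elementary remark that an odd integer is $\equiv 1$ or $\equiv 3\pmod 4$, so that "not of the form $4m+1$" coincides with "of the form $4m+3$", which is what lets the three equivalences line up.
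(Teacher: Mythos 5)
Your proposal is correct and is exactly the argument the paper intends: the corollary is stated as an immediate consequence of Theorem \ref{snss} together with the two preceding propositions on square roots of $-1$ in $\F_q$ and on the residue of $p^n$ modulo $4$, which is precisely the chain you assemble. The only additional remark you make, that an odd prime power not congruent to $1$ modulo $4$ must be congruent to $3$, is the same elementary bookkeeping implicit in the paper's treatment.
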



\section{Automorphisms and derivations of the Lorentz type algebra $\L_R$ if $\frac{1}{2}\in R$} 

Consider a ring $R$ with $\frac{1}{2}\in R$, then the algebra $\sl_2(R)$ admits a basis (as a free $R$-module) given by
$B=\{h,e,f\}$ where 
$h=\tiny\begin{pmatrix}1 & 0\cr 0 & -1\end{pmatrix}=e_{11}-e_{22}$, $e=\tiny\begin{pmatrix}0 & 1\cr 0 & 0\end{pmatrix}=e_{12}$ and
$f=\tiny\begin{pmatrix}0 & 0\cr 1 & 0\end{pmatrix}=e_{12}$. It is easy to check that if $d\colon\sl_2(R)\to\sl_2(R)$
is a derivation of the $R$-algebra $\sl_2(R)$, then its matrix relative to the basis $B$ is $$\left(
\begin{array}{ccc}
 0 & -2 \mu  & -2 \gamma  \\
 \gamma  & \lambda  & 0 \\
 \mu  & 0 & -\lambda 
\end{array}
\right)$$
so that the derivation algebra $\der(\sl_2(R))$ is the subalgebra of $\sl_3(R)$ generated as a free $R$-module by the matrices
$e_{22}-e_{33}$, $-2e_{12}+e_{31}$ and $-2e_{13}+e_{21}$. A routinary computation reveals that this subalgebra is isomorphic to $\sl_2(R)$
and so we have $$\der(\sl\nolimits_2(R))\cong\sl\nolimits_2(R).$$ Also this implies that all the derivations are inner in $\sl_2(R)$.

Consider now an algebraically closed field $K$ of characteristic other than $2$. 
It is easy to check that an nonzero element $M\in\sl_2(K)$ is semisimple if and only if $\vert M\vert\ne 0$. Also, it can be proved that the disjoint union of the 
orbits of elements $\tiny k\begin{pmatrix}1 & 0 \cr 0 & -1\end{pmatrix}$, ($k\in K^\times$) under the action of $\PGL_2(K):=\GL_2(K)/K^\times$ by conjugation is the set of all 
nonzero semisimple elements
or equivalently the set of all invertible matrices of $\sl_2(K)$. Recall that the group of inner automorphisms of $\sl_2(K)$ (denoted
$\inn(\sl_2(K))$) is just the image of  
the adjoint action $\Ad\colon \PGL_2(K)\to\aut(\sl_2(K))$.\medskip

The next two results are included only for the reader's convenience. Though they are known we have not found a suitable reference. 
They are extended in Lema \ref{noname} to the setting of algebraic groups in (affine scheme version). 

\begin{lm}\label{dfgdfg} Under the hypothesis in the previous paragraph,
take $h=\tiny\begin{pmatrix}1 & 0 \cr 0 & -1\end{pmatrix}$. If $\theta\in\aut(\sl_2(K))$ is such that $\theta(h)\in Kh$, then $\theta$ is inner.
\end{lm}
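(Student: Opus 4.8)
The plan is to exploit the action of $\ad(h)$ on $\sl_2(K)$ together with the single relation $[e,f]=h$. First I would note that, $\theta$ being bijective and $h\neq 0$, the hypothesis $\theta(h)\in Kh$ gives $\theta(h)=\lambda h$ for some $\lambda\in K^\times$. Since $\theta$ is an automorphism we have $\ad(\theta(h))=\theta\,\ad(h)\,\theta^{-1}$, while at the same time $\ad(\theta(h))=\lambda\,\ad(h)$ by linearity of $\ad$. Hence $\ad(h)$ and $\lambda\,\ad(h)$ are conjugate $K$-linear operators on $\sl_2(K)$, so they have the same spectrum. Relative to the basis $\{h,e,f\}$ the operator $\ad(h)$ is diagonal with eigenvalues $0,2,-2$, which are pairwise distinct because $\mathrm{char}\,K\neq 2$; comparing with the eigenvalue list $0,2\lambda,-2\lambda$ of $\lambda\,\ad(h)$ forces $\lambda\in\{1,-1\}$.

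Then I would split into the two cases. If $\lambda=1$, then $\theta\,\ad(h)=\ad(h)\,\theta$, so $\theta$ preserves each $\ad(h)$-eigenspace $Kh$, $Ke$, $Kf$; thus $\theta(e)=\mu e$ and $\theta(f)=\nu f$ for some $\mu,\nu\in K^\times$, and applying $\theta$ to $[e,f]=h$ yields $\mu\nu=1$. A direct check shows $\Ad\!\big(\diag(\mu,1)\big)$ fixes $h$, scales $e$ by $\mu$ and $f$ by $\mu^{-1}$, hence equals $\theta$, so $\theta$ is inner. If $\lambda=-1$, the same eigenvalue bookkeeping (applying $\theta$ to $[h,e]=2e$ and $[h,f]=-2f$) shows $\theta$ interchanges $Ke$ and $Kf$ while sending $h\mapsto-h$; write $\theta(e)=\mu f$, $\theta(f)=\nu e$, and once more $[e,f]=h$ forces $\mu\nu=1$. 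Here the realizing element is the anti-diagonal matrix $P=\left(\begin{smallmatrix}0 & 1\\ \mu & 0\end{smallmatrix}\right)\in\GL_2(K)$: a short $2\times 2$ computation gives $\Ad(P)h=-h$, $\Ad(P)e=\mu f$, $\Ad(P)f=\mu^{-1}e$, so $\theta=\Ad(P)$.

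In both cases the conjugating matrix lies in $\GL_2(K)$, hence $\theta$ is in the image of $\Ad\colon\PGL_2(K)\to\aut(\sl_2(K))$, i.e. $\theta\in\inn(\sl_2(K))$. I do not foresee a real obstacle: the only mildly delicate point is the case $\lambda=-1$, where one must remember to use the Weyl-type (anti-diagonal) element rather than a diagonal torus element, and then absorb the scalar $\mu$ by the explicit choice of $P$; the rest is forced by the spectrum comparison and the relation $[e,f]=h$. (As a by-product this argument does not even require $K$ algebraically closed, but I would state it in the running hypotheses of the preceding paragraph for consistency.)
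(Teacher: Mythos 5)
Your proof is correct and follows essentially the same route as the paper: the eigenvalue comparison for $\ad(h)$ forcing $\lambda=\pm 1$, then realizing $\theta$ explicitly as $\Ad$ of a diagonal matrix in the first case and of an anti-diagonal (Weyl-type) matrix in the second. The only cosmetic difference is that you derive the constraint $\mu\nu=1$ from $[e,f]=h$ explicitly, which the paper absorbs into its choice of matrices.
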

\begin{proof} Since $\theta(h)=\alpha h$ for some nonzero scalar $\alpha$, then $\theta(e)$ is an eigenvector of $\ad(h)$ with eigenvalue $2\alpha^{-1}$.
Since the nonzero eigenvalues of $\ad(h)$ are $\pm 2$, we have $\alpha=\pm 1$. We analyze both possibilities.
\begin{itemize}
 \item{$\alpha=1$}. We know that $\theta(e)\in Ke $ and similarly $\theta(f)\in Kf$. The matrix of $\theta$ relative to the basis $\{h,e,f\}$ is
 of the form
 $\hbox{diag}(1,\lambda,\lambda^{-1})$, for some nonzero scalar $\lambda$. This implies $\theta=\Ad\tiny\begin{pmatrix}\lambda & 0\cr 0 & 1\end{pmatrix}$, hence
 $\theta\in\inn(\sl_2(K))$.
 \item{$\alpha=-1$.} In this case $\theta(h)=-h$ and there is a nonzero scalar $\lambda$ such that $\theta(e)=\lambda f$ and $\theta(f)=\lambda^{-1}e$.
 Now it is straightforwad to prove that $\theta=\Ad{\tiny \begin{pmatrix}0 & \lambda^{-1}\cr 1 & 0\end{pmatrix}}$, hence $\theta\in\inn(\sl_2(K))$ again.
\end{itemize}
\end{proof}

\begin{co}\label{zuru}
 For an algebraically closed field of characteristic other than $2$ we have $\aut(\sl_2(K))=\inn(\sl_2(K))$.
 Moreover, the adjoint action $\PGL_2(K)\to\aut(\sl_2(K))$ is an isomorphism.
\end{co}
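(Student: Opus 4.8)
The plan is to prove Corollary \ref{zuru} as a direct consequence of Lemma \ref{dfgdfg} together with the description of the orbits of semisimple elements under $\PGL_2(K)$ recalled in the paragraph above. First I would show $\aut(\sl_2(K))=\inn(\sl_2(K))$: take an arbitrary $\theta\in\aut(\sl_2(K))$. Since $h$ is a nonzero semisimple element (indeed $\vert h\vert=-1\neq 0$) and automorphisms preserve the property of being semisimple — they commute with the formation of $\ad$ and hence with the Jordan decomposition, or more concretely $\theta(h)$ remains invertible because $\ad(\theta(h))=\theta\,\ad(h)\,\theta^{-1}$ has the same eigenvalues as $\ad(h)$, namely $0,\pm 2$ — the image $\theta(h)$ is again a nonzero semisimple, i.e. invertible, element of $\sl_2(K)$. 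By the orbit description, there is $P\in\PGL_2(K)$ with $\Ad(P)\theta(h)\in Kh$. Then $\Ad(P)\circ\theta$ is an automorphism sending $h$ into $Kh$, so Lemma \ref{dfgdfg} gives $\Ad(P)\circ\theta\in\inn(\sl_2(K))$, whence $\theta\in\inn(\sl_2(K))$ as well.

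Next I would address the "moreover" part: the adjoint action $\Ad\colon\PGL_2(K)\to\aut(\sl_2(K))$ is an isomorphism of groups. Surjectivity is exactly the equality $\aut(\sl_2(K))=\inn(\sl_2(K))$ just proved, since $\inn(\sl_2(K))$ is by definition the image of $\Ad$. For injectivity, suppose $\Ad(P)=1$ for some $P\in\GL_2(K)$; then $PMP^{-1}=M$ for all $M\in\sl_2(K)$, so $P$ commutes with $h$, $e$ and $f$, forcing $P$ to be a nonzero scalar matrix, hence trivial in $\PGL_2(K)$. (This is the same computation underlying Lemma \ref{plafter}.) Thus $\Ad$ is a bijective homomorphism, i.e. an isomorphism.

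I do not anticipate a serious obstacle here; the only point requiring a little care is the claim that $\theta$ preserves invertibility (equivalently semisimplicity) of elements of $\sl_2(K)$, which I would justify via the eigenvalue argument on $\ad$ rather than invoking abstract Jordan decomposition, so that the proof stays elementary and self-contained. Everything else is a direct assembly of Lemma \ref{dfgdfg}, the recalled orbit structure, and the standard centralizer computation.
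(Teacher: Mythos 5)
Your proposal is correct and follows essentially the same route as the paper: reduce an arbitrary automorphism, via the orbit description of nonzero semisimple (invertible) elements under $\PGL_2(K)$, to an automorphism sending $h$ into $Kh$, apply Lemma \ref{dfgdfg}, and then get injectivity of $\Ad$ from the fact that a matrix commuting with all of $\sl_2(K)$ is scalar. The only difference is that you spell out why $\theta(h)$ stays semisimple (via the eigenvalues of $\ad(\theta(h))=\theta\ad(h)\theta^{-1}$), a point the paper simply asserts.
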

\begin{proof} Let $\psi\in\aut(\sl_2(K))$ and consider $h':=\psi(h)$ which is a semisimple element. Then some nonzero scalar multiple of $h$ 
is in the orbit of $h'$ under the (adjoint) action of 
$\PGL_2(K)$ hence for some inner automorphism $\Gamma$ we have $\Gamma(h')=\lambda h$ (for a suitable $\lambda\in K^\times$). Thus $\theta:=\Gamma\psi$ 
is in the hypothesis of Lemma \ref{dfgdfg}.
So $\theta$ in inner implying that $\psi$ is also inner. This amounts to saying that the adjoint action is an epimorphism.
But it is also a monomorphism since any $2\times 2$ matrix commuting with all the elements of $\sl_2(K)$ is necessarily a scalar multiple of the identity matrix.
The corollary is proved.\end{proof}\medskip

Consider now the category of groups $\grp$ and the affine group scheme $$\affpgl\nolimits_2\colon\alg\nolimits_\Phi\to\grp$$
such that for any algebra $R$ in $\alg_\Phi$ we have $\affpgl_2(R):=\affgl_2(R)/\affgm(R)$, where $\affgm(R):=R^\times$ is the affine
group scheme whose associated Hopf algebra is $\Phi[x,x^{-1}]$ (Laurent polynomials in $x$) and $\affgl_2$ the general linear affine group scheme.
We consider also the affine group scheme $\affaut(\sl_2(\Phi))\colon\alg_\Phi\to\grp$ such that $\affaut(\sl_2(\Phi))(R):=\aut_R(\sl_2(R))$, 
and the adjoint action $\affAd\colon \affpgl_2\to\affaut(\sl_2(\Phi))$ such that for any $R$ in $\alg_\Phi$ we have 
$\affAd_R\colon \PGL_2(R)\to\aut_R(\sl_2(R))$ in the usual sense.

\begin{lm}\label{noname} If $\Phi$ is a field of characteristic other than $2$, the adjoint map $\affAd$ is an isomorphism of algebraic group schemes
$\affpgl_2\to\affaut(\sl_2(\Phi))$.	
\end{lm}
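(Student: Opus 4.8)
The plan is to promote the pointwise statement of Corollary \ref{zuru} to the scheme level by checking the two conditions that characterize an isomorphism of affine group schemes: that $\affAd$ is a closed immersion (equivalently, a monomorphism of functors) and that it is faithfully flat, or more concretely that the comorphism of Hopf algebras is an isomorphism. Since both $\affpgl_2$ and $\affaut(\sl_2(\Phi))$ are affine group schemes over the field $\Phi$, and both are smooth and connected of dimension $3$ (the first being $\mathbf{PGL}_2$, the second an open-and-closed piece of the linear scheme $\der(\sl_2)$ which by the computation preceding Lemma \ref{dfgdfg} is itself isomorphic to $\sl_2$), it suffices to show that $\affAd$ is injective on $R$-points for every $R$ in $\alg_\Phi$ and surjective on $\bar K$-points for $K$ an algebraic closure of $\Phi$; a dominant morphism between smooth irreducible affine group schemes of equal dimension which is injective on points of a big enough field is an isomorphism.

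First I would establish injectivity of $\affAd_R\colon \PGL_2(R)\to\aut_R(\sl_2(R))$ for an arbitrary $R$ with $\tfrac12\in R$. This is exactly the argument in Lemma \ref{plafter} and at the end of Corollary \ref{zuru}: if $\Ad(P)=1$ then $P$ commutes with $h$, $e$, $f$, hence with all of $\sl_2(R)$, which forces $P$ to be a scalar matrix, i.e. trivial in $\PGL_2(R)$. So $\affAd$ is a monomorphism of functors. Next I would record the scheme-theoretic structure of the target: the derivation computation done just before Lemma \ref{dfgdfg} shows $\der(\sl_2(R))\cong\sl_2(R)$ functorially in $R$, so $\affaut(\sl_2(\Phi))$ is a closed subscheme of the affine space underlying $\der(\sl_2)$, in particular it is an affine group scheme whose Lie algebra is $\der(\sl_2(\Phi))=\sl_2(\Phi)$, which has dimension $3$. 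Its identity component therefore has dimension $3$ as well, the same as $\dim\mathbf{PGL}_2=3$.

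Then I would show $\affAd$ is surjective on $K$-points for $K=\bar\Phi$ algebraically closed of characteristic other than $2$: that is precisely Corollary \ref{zuru}. Consequently the image of $\affAd_{\bar K}$ is all of $\affaut(\sl_2(\Phi))(\bar K)$, and in particular the image of $\affAd$ is a subgroup scheme of $\affaut(\sl_2(\Phi))$ that is dense and, being the image of the smooth connected $3$-dimensional $\mathbf{PGL}_2$ under a monomorphism, has dimension $3$; hence it contains the identity component. Combining with the surjectivity on $\bar K$-points it equals the whole target. Finally, a monomorphism of affine group schemes over a field which is also an epimorphism onto a smooth target — or more directly, a bijective morphism of smooth affine group schemes of the same dimension over a field of characteristic not dividing anything relevant — is an isomorphism; the key point making this clean is that $\mathbf{PGL}_2$ is smooth, so $\affAd$ is a monomorphism from a smooth scheme, its differential $\ad\colon\pgl_2(\Phi)\to\der(\sl_2(\Phi))=\sl_2(\Phi)$ is injective between $3$-dimensional spaces hence bijective, and a monomorphism that is an isomorphism on tangent spaces at the identity and surjective on geometric points must be an isomorphism.

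The main obstacle I expect is the characteristic-sensitivity of the tangent-space and smoothness arguments: over a field of characteristic $2$ the adjoint representation of $\sl_2$ acquires a kernel, so one genuinely needs $\mathrm{char}\,\Phi\neq 2$ both for $\ad$ to be injective and for $\mathbf{PGL}_2$ (as opposed to $\mathbf{SL}_2$) to be the right group; and even in odd characteristic one must be slightly careful that $\affaut(\sl_2(\Phi))$ is reduced/smooth rather than carrying infinitesimal automorphisms, which is why I reduce everything to the explicit description $\affaut(\sl_2(\Phi))\hookrightarrow\der(\sl_2)\cong\sl_2$ from the paragraph preceding Lemma \ref{dfgdfg} and to Corollary \ref{zuru} for the geometric points. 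Once those two inputs are in hand, the argument is the standard ``monomorphism $+$ surjective on $\bar K$-points $+$ iso on Lie algebras $\Rightarrow$ iso'' for affine group schemes over a field.
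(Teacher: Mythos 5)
You take essentially the same route as the paper: its proof also feeds Corollary \ref{zuru} (bijectivity on points of $K=\bar\Phi$), the fact that the differential $\ad\colon\pgl_2(\Phi)\to\der(\sl_2(\Phi))$ is an isomorphism (using $\pgl_2(\Phi)\cong\sl_2(\Phi)$ in characteristic $\ne 2$), and smoothness of the target into a standard criterion, citing \cite[Proposition (22.5), p.~340]{boi}; your extra observation that $\affAd$ is a monomorphism on all $R$-points (Lemma \ref{plafter}) is harmless but not needed for that criterion.

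However, the step by which you justify smoothness and the dimension of the target is wrong as written, and it is load-bearing. The scheme $\affaut(\sl_2(\Phi))$ is not ``an open-and-closed piece of the linear scheme $\der(\sl_2)$'', and the embedding $\affaut(\sl_2(\Phi))\hookrightarrow\der(\sl_2)$ you appeal to does not exist: the automorphism scheme is a closed subscheme of $\affgl(\sl_2(\Phi))\cong\affgl_3$, and $\der(\sl_2(\Phi))$ is only its Lie algebra, i.e.\ its tangent space at the identity. For the same reason the inference ``the Lie algebra has dimension $3$, therefore the identity component has dimension $3$'' runs backwards: for a group scheme of finite type over a field one only has $\dim G\le\dim$ of its Lie algebra, and equality is precisely the smoothness you are trying to prove --- the paper's own characteristic-$2$ computation, where $\dim\affaut(\L_\Phi)=9$ while $\dim\der(\L_\Phi)=12$, shows that smoothness of such automorphism schemes is not automatic. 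The repair is the paper's dimension count: the computation before Lemma \ref{dfgdfg} gives that the Lie algebra of $\affaut(\sl_2(\Phi))$ is $\der(\sl_2(\Phi))\cong\sl_2(\Phi)$, of dimension $3$, while Corollary \ref{zuru} over $K=\bar\Phi$ identifies the geometric points with $\PGL_2(K)$, so the underlying reduced group has dimension $3$; hence $\dim\affaut(\sl_2(\Phi))=3$ equals the dimension of its Lie algebra and the target is smooth. With that fixed, your three inputs (monomorphism, surjectivity on $\bar\Phi$-points, bijective differential) do yield the isomorphism exactly as in the paper.
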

\begin{proof} 
We know that $\dim\affaut(\sl_2(\Phi))=3$ hence this is a smooth affine group scheme.
We know that the differential $\ad\colon \pgl_2(\Phi)\to \der(\sl_2(\Phi))$ of $\affAd$ is an isomorphism (under
the hypothesis in the Lemma $\pgl_2(\Phi)\cong\sl_2(\Phi)$).
If $K$ is the algebraic closure of $\Phi$, applying Corollary \ref{zuru} and 
\cite[Proposition (22.5), p. 340]{boi} we get the required isomorphism.\end{proof}\medskip

The affine group scheme $\mu_n\colon\alg_\Phi\to\grp$ is defined as the one such that for any algebra $R$ in $\alg_\Phi$, we have
$\mu_n(R):=\{x\in R^\times\colon x^ n=1\}$. It is an algebraic group whose representing Hopf algebra is $\Phi[x]/(x^n-1)$.

\begin{lm}\label{carras}
In the case of a field $\K$ with $1/2,\sqrt{-1}\in\K$,
the derivation algebra of the Lorentz type algeba $\L_\K$ is isomorphic to $\sl_2(\K)^2$.
The automorphism group $\aut(\L_\K)$ is $\PGL_2(\K)^2\rtimes\mu_2(\K)$.
\end{lm}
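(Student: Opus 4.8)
The plan is to exploit the decomposition $\L_\K \cong \sl_2(\K)\oplus\sl_2(\K)$ furnished by Proposition \ref{prtwo} (valid since $\tfrac12,\sqrt{-1}\in\K$), and then to understand derivations and automorphisms of a direct product of two copies of $\sl_2(\K)$. Write $\L_\K = I\oplus J$ with $I\cong J\cong\sl_2(\K)$ two ideals with $[I,J]=0$. For derivations, I would first observe that any derivation $d$ of $I\oplus J$ must send the unique minimal ideals into themselves: indeed $I$ and $J$ are the only minimal ideals of $\L_\K$ (any minimal ideal, being a simple Lie algebra, is either $I$ or $J$, using Theorem \ref{cuatrocinco} to rule out anything $4$- or $5$-dimensional and an easy argument for dimensions $1,2$), and since there are exactly two of them and $\der(\L_\K)$ permutes the minimal ideals while fixing each up to the finite permutation action, connectedness-type reasoning (or just: a derivation cannot induce a nontrivial permutation of a finite set in a one-parameter way — more concretely $d$ maps $I$ to the sum of minimal ideals it is not orthogonal to, and $[I,I]=I$ forces $d(I)\subseteq I$) gives $d(I)\subseteq I$ and $d(J)\subseteq J$. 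Hence $\der(\L_\K)=\der(I)\times\der(J)\cong\der(\sl_2(\K))\times\der(\sl_2(\K))$, and since $\der(\sl_2(\K))\cong\sl_2(\K)$ (established in this section, as all derivations of $\sl_2$ are inner in characteristic $\ne 2$), we conclude $\der(\L_\K)\cong\sl_2(\K)^2$.

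For the automorphism group, the key structural fact is again that $I$ and $J$ are the only two minimal ideals of $\L_\K$, hence any $\psi\in\aut(\L_\K)$ either fixes both $I$ and $J$ or swaps them. The automorphisms fixing both restrict to automorphisms of $I$ and of $J$, so they form the subgroup $\aut(I)\times\aut(J)\cong\aut(\sl_2(\K))^2\cong\PGL_2(\K)^2$, using Corollary \ref{zuru}. Since $I$ and $J$ are isomorphic as Lie algebras, there does exist an automorphism $\tau$ of $\L_\K$ interchanging them (pick any isomorphism $\sigma\colon I\to J$ and an isomorphism $\sigma'\colon J\to I$ with $\sigma'\sigma$ equal to a convenient automorphism of $I$, e.g. take $\sigma$ and $\sigma^{-1}$; then $(a,b)\mapsto(\sigma^{-1}(b),\sigma(a))$ is an involutive automorphism swapping the factors). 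Thus $\aut(\L_\K)$ is the semidirect product of the normal subgroup $\PGL_2(\K)^2$ with the order-two group generated by such a swap, i.e. $\aut(\L_\K)\cong\PGL_2(\K)^2\rtimes\mu_2(\K)$, where $\mu_2(\K)=\Z/2$ acts by exchanging the two $\PGL_2(\K)$ factors.

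I expect the main obstacle to be two bookkeeping points rather than a deep difficulty. First, one must verify carefully that $I$ and $J$ really are the only minimal ideals — this rests on the simplicity of $\sl_2(\K)$ (true since $\mathrm{char}\,\K\ne 2$, which is built into the hypothesis via $\tfrac12\in\K$) together with Theorem \ref{cuatrocinco} to exclude ideals of dimension $4,5$ and a short direct argument that $\L_\K$ has no ideal of dimension $1$ or $2$; one also needs that a minimal ideal of $\L_\K$ is contained in $I$ or $J$, which follows because $[\L_\K,\L_\K]=\L_\K$ and the projections of $\L_\K$ onto $I$ and $J$ along the decomposition are Lie homomorphisms, so a minimal ideal maps to $0$ or isomorphically in each factor, and being simple it lands inside exactly one factor. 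Second, one must confirm that the semidirect product is genuinely non-split-trivial, i.e. that a swap automorphism $\tau$ normalizes $\PGL_2(\K)^2$ with the expected (swap) action; this is immediate once $\tau$ is written down explicitly, since conjugating $(\mathrm{Ad}(P),\mathrm{Ad}(Q))$ by $\tau$ yields $(\mathrm{Ad}(Q'),\mathrm{Ad}(P'))$ for the corresponding matrices. With these two points in hand the identification $\aut(\L_\K)\cong\PGL_2(\K)^2\rtimes\mu_2(\K)$ is formal.
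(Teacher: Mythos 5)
Your argument is correct and follows essentially the same route as the paper's proof: decompose $\L_\K=I\oplus J$ via Proposition \ref{prtwo}, note that perfectness of the ideals ($[I,I]=I$, $[J,J]=J$) forces every derivation to preserve them, so $\der(\L_\K)\cong\der(\sl_2(\K))^2\cong\sl_2(\K)^2$, and then split $\aut(\L_\K)$ into the subgroup fixing $I$ and $J$ (isomorphic to $\PGL_2(\K)^2$ by Corollary \ref{zuru}) and its coset of swapping automorphisms generated by an explicit exchange, giving $\PGL_2(\K)^2\rtimes\mu_2(\K)$. The only difference is cosmetic: you justify the fix-or-swap dichotomy by identifying $I$ and $J$ as the only minimal ideals (a detail the paper leaves implicit) and construct the swap abstractly rather than via the paper's explicit $\omega$ on the basis $\{h_\alpha,v_{\pm\alpha},h_\beta,v_{\pm\beta}\}$.
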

\begin{proof} By Proposition \ref{prtwo}, we have $\L_\K\cong \o(4;\K)=I\oplus J$ for two ideals $I,J\cong\sl_2(\K)$. Since $[I,I]=I$ (and also $[J,J]=J$), these
ideals are invariant under derivations. Thus, $\der(\L_\K)=\der(I)\oplus\der(J)\cong\der(\sl_2(\K))^2=\sl_2(\K)^2$ taking into account
that the derivations of $\sl_2(\K)$ are inner. Now there are two types of automorphisms of $\L_\K$: those that fix the ideals $I$ and $J$ and those
that swap them. Consider first the (normal) subgroup $G_0$ of automorphism fixing the ideals. This is of course  $G_0=\aut(I)\times\aut(J)\cong
\aut(\sl_2(\K))^2=\PGL_2(\K)^2$ (which is a connected group). The automorphisms swapping the ideals $I$ and $J$ are those in 
$G_0\omega$ where
$\omega\in\aut(\L_\K)$ is the exchange automorphism acting in the way $\omega(h_\alpha)=h_\beta$, $\omega(v_{\pm\alpha})=v_{\pm\beta}$,
$\omega(h_\beta)=h_\alpha$, $\omega(v_{\pm\beta})=v_{\pm\alpha}$. Thus $\aut(\L_\K)=G_0\cup G_0\omega\cong G_0\rtimes\mu_2(\K)\cong
\PGL_2(\K)\rtimes\mu_2(\K)$.
\end{proof}

\begin{co}\label{inner}\label{interior}
 The derivations of $\L_\K$ are inner. The automorphisms of $\L_\K$ are inner, more precisely if we identify $\L_\K$ with $\sl_2(\K)^2$, 
 any $\phi\in\aut(\L_\K)$ fixing $I$ and $J$ is of the form $$\begin{pmatrix}x & 0\cr 0 & y\end{pmatrix}\mapsto 
 \begin{pmatrix}pxp^{-1} & 0\cr 0 & qyq^{-1}\end{pmatrix}=
 \begin{pmatrix}p & 0\cr 0 & q\end{pmatrix}\begin{pmatrix}x & 0\cr 0 & y\end{pmatrix}\begin{pmatrix}p^{-1} & 0\cr 0 & q^{-1}\end{pmatrix},$$
 and any $\phi\in\aut(\L_\K)$ swapping $I$ and $J$ is of the form
 $$\begin{pmatrix}x & 0\cr 0 & y\end{pmatrix}\mapsto 
 \begin{pmatrix}pyp^{-1} & 0\cr 0 & qxq^{-1}\end{pmatrix}=
 \begin{pmatrix}0 & p\cr q & 0\end{pmatrix}\begin{pmatrix}x & 0\cr 0 & y\end{pmatrix}\begin{pmatrix}0 & q^{-1}\cr p^{-1} & 0\end{pmatrix},$$
 for some $p,q\in\GL_2(\K)$.
\end{co}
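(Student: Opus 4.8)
The plan is to extract everything from Lemma \ref{carras}, which already tells us $\aut(\L_\K)\cong\PGL_2(\K)^2\rtimes\mu_2(\K)$ under the identification $\L_\K\cong\sl_2(\K)^2$ coming from Proposition \ref{prtwo}. First I would recall from the discussion preceding Corollary \ref{zuru} that the adjoint action $\Ad\colon\PGL_2(\K)\to\aut(\sl_2(\K))$ is an isomorphism (Corollary \ref{zuru}), so every automorphism of a single $\sl_2(\K)$ factor is conjugation by some $p\in\GL_2(\K)$, well-defined up to a scalar. Likewise, since $\der(\sl_2(\K))\cong\sl_2(\K)$ consists entirely of inner derivations (established in the paragraph opening this section), and since $\der(\L_\K)=\der(I)\oplus\der(J)$ because the ideals $I,J$ are invariant under derivations (being perfect), every derivation of $\L_\K$ is inner. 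That disposes of the first sentence.

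For the automorphism statements I would split, exactly as in the proof of Lemma \ref{carras}, into the subgroup $G_0=\aut(I)\times\aut(J)$ fixing each ideal and its coset $G_0\omega$ swapping them. If $\phi\in G_0$, then $\phi$ restricts to automorphisms of $I$ and of $J$; by Corollary \ref{zuru} these are $\Ad(p)$ and $\Ad(q)$ for some $p,q\in\GL_2(\K)$, which under the matrix identification of $\sl_2(\K)^2$ with block-diagonal $2\times2$ matrices is precisely $\diag(x,y)\mapsto\diag(pxp^{-1},qyq^{-1})$, and this equals conjugation by $\diag(p,q)\in\GL_4(\K)$, establishing the first displayed formula. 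For $\phi\in G_0\omega$, write $\phi=\phi_0\omega$ with $\phi_0\in G_0$; then $\omega$ sends $\diag(x,y)\mapsto\diag(y,x)$, and composing with the previous case gives $\diag(x,y)\mapsto\diag(pyp^{-1},qxq^{-1})$, which one checks equals conjugation by the off-diagonal matrix $\left(\begin{smallmatrix}0&p\\ q&0\end{smallmatrix}\right)$ whose inverse is $\left(\begin{smallmatrix}0&q^{-1}\\ p^{-1}&0\end{smallmatrix}\right)$, giving the second displayed formula. The word ``inner'' in the statement simply means realized by conjugation inside the ambient matrix algebra $\gl_4(\K)$, so this verifies the claim.

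The only genuinely delicate point is making sure the matrix identification of $\L_\K\cong\o(4;\K)\cong I\oplus J$ with block-diagonal matrices in $\sl_2(\K)\oplus\sl_2(\K)$ is the one under which the formulas are literally correct; this requires tracking the explicit isomorphism of Proposition \ref{prtwo} (the basis $h_\alpha,v_{\pm\alpha},h_\beta,v_{\pm\beta}$) and the exchange automorphism $\omega$ through the identification, but this is routine bookkeeping rather than a conceptual obstacle. Once the identification is fixed, the computations reducing ``$\diag(x,y)\mapsto\diag(pxp^{-1},qyq^{-1})$ equals conjugation by $\diag(p,q)$'' and its swapped analogue are immediate block-matrix multiplications, so the main work is really just citing Lemma \ref{carras} and Corollary \ref{zuru} and assembling the pieces.
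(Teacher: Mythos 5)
Your proposal is correct and matches the paper's intended derivation: the corollary is left without a separate proof precisely because it is the unwinding of Lemma \ref{carras} (the splitting $\aut(\L_\K)=G_0\cup G_0\omega$ with $G_0=\aut(I)\times\aut(J)$) together with Corollary \ref{zuru} and the innerness of $\der(\sl_2(\K))$, which is exactly what you do, including the routine block-matrix verification of the two displayed formulas.
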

\def\Lie{\mathop\text{\bf Lie}}

In this paragraph we use the finite constant group (see \cite[2.3, p. 16]{Waterhouse}) $\bm{Z}_2$. This is the affine group scheme whose
representing Hopf algebra is $\Phi^2:=\Phi\times\Phi$ with componentwise product and Hopf algebra structure define in the reference above.
Thus, for $R$ in $\alg_\Phi$ we have $\bm{Z}_2(R):=\hom_\Phi(\Phi^2,R)$ and so any element $f\in\bm{Z}_2(R)$ is completely determined by $e_1:=f(1,0)$
and $e_2:=f(0,1)$ which are orthogonal idempotents in $R$. This produces a decomposition $R=Re_1\oplus Re_2$ as a direct sum of two ideals
and reciprocally any set $\{e_1,e_2\}$ of orthogonal idempotents of $R$ gives a decomposition $R=Re_1\oplus Re_2$, hence a homomorphism
$f\colon\Phi^2\to R$ such that $f(1,0)=e_1$ and $f(0,1)=e_2$. Of course, if $R$ has no idempotents others than $0$ and $1$, the abstract
group of $\bm{Z}_2(R)$ is isomorphic to the group $\Z_2$ of integers module $2$. 
The set $\bm{Z}_2(R)$ is in one-to-one correspondence with the set of decompositions of $R$ as a direct sum of ideals.

\begin{re}\label{standard_result}\rm
It is a standard result that if $\frac{1}{2}\in\Phi$, there is an isomorphism of affine group schemes $\bm{Z}_2\cong\bm{\mu}_2$.
\end{re}

Consider next the affine group scheme $\affaut(\Phi^2)\colon\alg_\Phi\to\grp$ such that $$\affaut(\Phi^2)(R):=\aut\nolimits_R(R^2)$$ for any
 algebra $R$ in $\alg_\Phi$ (the $R$-algebra structure of $R^2$ is given by the componentwise product).
 There are some remarkable elements in $\aut_R(R^2)$ which deserve some attention. 
 If $\theta\in\aut_R(R^2)$ and $\theta(R\times 0)=R\times 0$, $\theta(0\times R)=0\times R$, then $\theta=1_{R^2}$.
 If $\theta$ swaps the ideals $R\times 0$ and $0\times R$ of $R^2$, then $\theta$ is the exchange automorphism.
 But in general $I:=\theta(R\times 0)$ and $J=\theta(0\times R)$ give a decomposition $R^2=I\oplus J$ where $I,J\triangleleft R^2$.
 Applying the result in Remark \ref{along} we have $I=\ia\times\ib$, $J=\ib\times\ia$ for some $\ia,\ib\triangleleft R$ such that
 $R=\ia\oplus\ib$. Therefore $\theta(r,0)=(\alpha_1(r),\alpha_2(r))$ where $\alpha_1\colon R\to\ia$, $\alpha_2\colon R\to \ib$.
 Also $\theta(0,r)=(\beta_1(r),\beta_2(r))$ where $\beta_1\colon R\to\ib$, $\beta_2\colon R\to \ia$.

 Reciprocally, for every decomposition
 $R=\ia\oplus\ib$ where $\ia,\ib\triangleleft R$,  we may consider the element $\sigma_{\ia,\ib}\in\aut_R(R^2)$ such that
 $\sigma_{\ia,\ib}(a+b,a'+b'):=(a+b',a'+b)$. This is an order-two automorphism and $\sigma_{\ia,\ib}\theta\in\aut_R(R^2)$
 preserves the ideals $R\times 0$ and $0\times R$, hence $\sigma_{\ia,\ib}\theta=1$ implying $\theta=\sigma_{\ia,\ib}$.
 Thus, there is a one-to-one correspondence from $\aut_R(R^2)$ to the set of all possible decompositions $R=\ia\oplus\ib$
 with $\ia,\ib\triangleleft R$.  Summarizing: we have an isomorphism 
\begin{equation}\label{finicons}
\affaut(\Phi^2)\cong\bm{Z}_2. 
\end{equation}

Next we want to investigate the group $\aut_R(\sl_2(R^2))$ for any algebra $R$ in $\alg_\Phi$. The first
obvious fact is that there is a group monomorphism $\aut_R(R^2)\to\aut_R(\sl_2(R^2))$ mapping $\alpha\mapsto \hat\alpha$ where $\hat\a$ consists on
applying $\alpha$ to all the entries of any matrix in $\sl_2(R^2)$. We will also use the notation $p^\a$ for the matrix 
$p^\a=(\a(p_{ij}))$ if $p$ is any matrix $p=(p_{ij})\in\gl_2(R^2)$.

Another source of automorphisms of $\sl_2(R^2)$ is given by the set
of all $f\times g$ where $f,g\in\aut(\sl_2(R))$ and under the usual identification of $\sl_2(R^2)$ with $\sl_2(R)^2$ we have
$(f\times g)(x,y):=(f(x),g(y))$ for any $x,y\in\sl_2(R)$. If $\Phi$ is a field of characteristic other than $2$, then $f=\Ad(\bar p)$
and $g=\Ad(\bar q)$ for $\bar p, \bar q\in\PGL_2(R)$. Thus $f\times g=\Ad(\bar p,\bar q)$ and we have the formula
\begin{equation}
\hat\a \Ad(\bar p,\bar q)=\Ad[(\overline{p},\overline{q})^\a] \hat\a  
\end{equation}
Thus, if $S_1$ is the subgroup of $\aut_R(\sl_2(R))$ given by
$S_1:=\{\hat\a\colon \a\in\aut_R(R^2)\}\cong\aut_R(R^2)$ and $S_2:=\{\Ad(\bar p,\bar q)\colon \bar p,\bar q\in\PGL_2(R)\}\cong\PGL_2(R)^2$,
we have $S_1,S_2\subset\aut_R(\sl_2(R^2))$ and $S_1\cap S_2=\{1\}$ by Lemma \ref{plafter}. 

\begin{theo}\label{complex_case}
 Let $\Phi$ be a field with $\frac{1}{2},\sqrt{-1}\in\Phi$ and $\L_\Phi$ the Lorentz type algebra over $\Phi$. Then the
 algebraic group $\affaut(\L_\Phi)$ is isomorphic to $\affpgl_2^2\rtimes\bm{Z}_2$. 
\end{theo}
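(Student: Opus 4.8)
The strategy is to realize $\affaut(\L_\Phi)$ concretely by combining the scalar-extension description of $\L_\Phi$ with the structural results already established. By Theorem \ref{really}, for any $R$ in $\alg_\Phi$ we have $\L_R\cong\sl_2(\bar R)^R$, and since $\Phi$ already contains $\sqrt{-1}$ the ring $\bar R=R\times R$ is split, so $\sl_2(\bar R)\cong\sl_2(R^2)=\sl_2(R)\times\sl_2(R)$ as $\bar R$-algebras; hence $\L_R\cong\sl_2(R^2)$ as $R$-algebras. Thus $\affaut(\L_\Phi)(R)=\aut_R(\sl_2(R^2))$, and the whole problem reduces to computing this automorphism functor and identifying it with $\affpgl_2^2\rtimes\bm{Z}_2$.

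The key step is to prove that $\aut_R(\sl_2(R^2))=S_1\ltimes S_2$ where, as in the paragraph preceding the statement, $S_2=\{\Ad(\bar p,\bar q)\}\cong\PGL_2(R)^2$ and $S_1=\{\hat\alpha\colon\alpha\in\aut_R(R^2)\}\cong\aut_R(R^2)\cong\bm Z_2(R)$ (the last isomorphism is \eqref{finicons}). I would argue as follows. Take $\psi\in\aut_R(\sl_2(R^2))$. By Lemma \ref{stinkone}, the two ideals $\sl_2(R)\times 0$ and $0\times\sl_2(R)$ have images under $\psi$ of the form $\sl_2(\ia)\times\sl_2(\ib)$ and $\sl_2(\ic)\times\sl_2(\id)$; running the annihilator bookkeeping exactly as in the proof of Theorem \ref{geo} (using $[\psi(I),\psi(J)]=0$, $\psi(I)\oplus\psi(J)=\sl_2(R^2)$, $\ann_R$ of each summand $=0$) forces $R=\ia\oplus\ib$ with $\{\psi(I),\psi(J)\}=\{\sl_2(\ia)\times\sl_2(\ib),\ \sl_2(\ib)\times\sl_2(\ia)\}$. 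This decomposition $R=\ia\oplus\ib$ corresponds to an element $\alpha\in\aut_R(R^2)$ via \eqref{finicons}, and after composing $\psi$ with $\hat\alpha^{-1}\in S_1$ we may assume $\psi$ fixes each of $\sl_2(R)\times 0$ and $0\times\sl_2(R)$. Then $\psi=f\times g$ with $f,g\in\aut_R(\sl_2(R))$, and since all derivations and automorphisms of $\sl_2(R)$ come from $\PGL_2$ — this is the content of Lemma \ref{noname} applied functorially, i.e. $\affaut(\sl_2(\Phi))\cong\affpgl_2$ — we get $f=\Ad(\bar p)$, $g=\Ad(\bar q)$, so $\psi\in S_2$. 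Hence $\aut_R(\sl_2(R^2))=S_1 S_2$; and $S_1\cap S_2=\{1\}$ by Lemma \ref{plafter} (an element of the intersection is an inner-type automorphism equal to some $\hat\alpha$, forcing $\alpha=1$). The product is semidirect with $S_2$ normal because of the conjugation formula $\hat\alpha\,\Ad(\bar p,\bar q)=\Ad[(\bar p,\bar q)^\alpha]\,\hat\alpha$ displayed above: $S_1$ normalizes $S_2$, and $S_2$ is a normal subgroup since it is the kernel of the map sending $\psi$ to the induced element of $\aut_R(R^2)$ (the latter is well defined because the pair of ideals $\{\psi(I),\psi(J)\}$ is determined canonically).

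Finally I would check that all of this is natural in $R$, so that it is an isomorphism of functors and indeed of affine group schemes. Naturality of $S_2\cong\affpgl_2^2$ is Lemma \ref{noname}; naturality of $S_1\cong\affaut(\Phi^2)\cong\bm Z_2$ is \eqref{finicons} together with Remark \ref{standard_result} (though here we only need $\bm Z_2$, not $\bm\mu_2$); the conjugation formula, being a matrix identity, is manifestly natural and encodes the semidirect-product structure at the scheme level. Assembling these, $\affaut(\L_\Phi)\cong\affaut(\sl_2(\Phi^2))\cong\affpgl_2^2\rtimes\bm Z_2$.

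I expect the main obstacle to be the surjectivity half of the structural step, namely ruling out "exotic" automorphisms of $\sl_2(R^2)$ that neither preserve nor cleanly swap the two $\sl_2$-factors: this is precisely where the decomposition-of-$R$ argument (Lemma \ref{stinkone} plus the annihilator computation of Theorem \ref{geo}) is essential, and one must be careful that the resulting idempotent decomposition of $R$ varies correctly with $R$ so that the identification with $\bm Z_2$ is functorial. The reduction $\L_R\cong\sl_2(R^2)$ and the passage from Lemma \ref{noname} to its functorial form are comparatively routine once Theorem \ref{really} is in hand.
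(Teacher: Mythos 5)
Your proposal is correct and follows essentially the same route as the paper: reduce to $\aut_R(\sl_2(R^2))$, use Lemma \ref{stinkone} and the annihilator argument of Theorem \ref{geo} to split off an element of $\aut_R(R^2)\cong\bm{Z}_2(R)$, apply Lemma \ref{noname} to the factor-preserving part, and use Lemma \ref{plafter} plus the conjugation formula for the semidirect structure. The only cosmetic difference is that you obtain $\L_R\cong\sl_2(R)^2$ via Theorem \ref{really} and the splitting of $\bar R$, whereas the paper cites Lemma \ref{one} and Proposition \ref{prtwo}; both reductions are equivalent.
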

\begin{proof}
Under the hypothesis above, for any $R$ in $\alg_\Phi$, the Lorentz type algebra $\L_R$ is isomorphic to $\o_4(R)\cong\sl_2(R)^2$ by  
Lemma \ref{one} and Proposition \ref{prtwo}.
Now, any $\alpha\in\aut_R(R^2)$ induces $\hat\alpha\colon \sl_2(R^2)\to\sl_2(R^2)$ applying $\alpha$ to all the entries of 
any matrix in $\sl_2(R^2)$. After identifying $\sl_2(R^2)$ with $\sl_2(R)\oplus\sl_2(R)$ we have that $\hat\alpha$ is 
an automorphism of $\sl_2(R)\oplus\sl_2(R)$. Next, to any couple $(\bar p,\bar q)\in\PGL_2(R)^2$ we may associate an
automorphism $\Ad(\bar p,\bar q)\colon \sl_2(R)\oplus\sl_2(R)\to \sl_2(R)\oplus\sl_2(R)$ such that $(x,y)\mapsto (\Ad(p)x,\Ad(q)y)=(pxp^{-1},qyq^{-1})$.
Thus, we can define  $F_R\colon \PGL_2(R)^2\rtimes\aut_R(R^2)\to\aut_R(\o_4(R))$ such that for any couple of equivalence classes
$(\bar p,\bar q)\in\PGL_2(R)$ and for any $\alpha\in\aut_R(R^2)$ we have $F_R((\bar p,\bar q),\alpha):=\Ad(\bar p,\bar q) \hat\alpha$.
So, we have defined a homomorphism of group schemes $F\colon \affpgl_2^2\rtimes\affaut(\Phi^2)\to \affaut(\L_\Phi)$ and next we show
that it is an isomorphism. For this, we construct an inverse $G\colon \affaut(\L_\Phi)\to \affpgl_2^2\rtimes\affaut(\Phi^2)$.
We take $\theta\in \aut_R(\L_R)$ which after the above identification is an automorphism $\theta\colon \sl_2(R^2)\to\sl_2(R^2)$.
Define $I:=\theta(\sl_2(R)\times 0)$ and $J=\theta(0\times\sl_2(R))$ which are ideals in $\sl_2(R^2)$ satisfying 
$\ann_R(I)=\ann_R(J)=0$ and $\sl_2(R^2)=I\oplus J$. Then, applying Theorem \ref{geo}, there are ideals $\ia,\ib\triangleleft R$
such that $R=\ia\oplus\ib$, $I=\sl_2(\ia)\times\sl_2(\ib)$ and $J=\sl_2(\ib)\times\sl_2(\ia)$. Next we define the
order-two automorphism $\tau_{\ia,\ib}$ of $\sl_2(R^2)$ such that $(a+b,a'+b')\mapsto (a+b',a'+b)$, where
$a,a'\in\sl_2(\ia)$, $b,b'\in\sl_2(\ib)$. Now, the composition $\tau_{\ia,\ib}\theta$ is an automorphism of $\sl_2(R^2)$ which 
preserves the ideals $\sl_2(R)\times 0$ and $0\times\sl_2(R)$. Therefore, there are $R$-automorphisms $f, g\in\sl_2(R)$
such that $\tau_{\ia,\ib}\theta=f\times g\colon \sl_2(R^2)\to\sl_2(R^2)$ such that $(x,y)\mapsto(f(x),g(y))$.
Thus $\theta=\tau_{\ia,\ib}(f\times g)$. Consider now $\sigma\colon R^2\to R^2$ such that 
$\sigma(a+b,a'+b')=(a+b',a'+b)$ for $a,a'\in\ia$, $b,b'\in\ib$. Then $\tau_{\ia,\ib}=\hat\sigma$ where 
$$\hat\sigma \tiny\begin{pmatrix}z_1 & z_2\\ z_3 & -z_1\end{pmatrix}=\begin{pmatrix}\sigma(z_1) & \sigma(z_2)\\ \sigma(z_3) & -\sigma(z_1)
\end{pmatrix}, z_i\in R^2, (i=1,2,3).$$
Furthermore, by Lemma \ref{noname}, $f\times g=\Ad(\bar p_1,\bar p_2)$ for some $\bar p_1,\bar p_2\in\PGL_2(R)$.
Thus $\theta=\hat\sigma\Ad(\bar p_1,\bar p_2)=\Ad(\bar p,\bar q)\hat\sigma$ where $(p,q)=(p_1,p_2)^\alpha$. Then
$$G_R(\theta):=((\bar p,\bar q),\sigma)$$
and so $F_R G_R=1$. The fact that $G_R F_R=1$ follows from the unicity of the expresion of an element in $\affaut_R(\L_R)$ as a composition
of an element of $S_2$ with an element in $S_1$ (Lemma \ref{plafter}).\end{proof}
\medskip

To finish this section we study the algebraic group $\affaut(\L_\Phi)$ in the case in which $\frac{1}{2}\in\Phi$ but $\sqrt{-1}\notin\Phi$.
\medskip

\begin{theo}\label{real_case}
Let $\Phi$ be a field with $\frac{1}{2}\in\Phi$ and with no square root of $-1$. Then the affine group scheme $\affaut(\L_\phi)$ is isomorphic
to $\affpgl_2(\bar\Phi)\rtimes\bm{Z}_2$. In particular, the automorphism group of the (real) Lorentz algebra is 
$\PGL_2(\C)\rtimes\mu_2(\R)$.
\end{theo}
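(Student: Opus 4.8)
The plan is to mimic the proof of Theorem \ref{complex_case}, replacing $\sl_2(R)^2$ by the restriction-of-scalars description $\sl_2(\bar R)^R$ obtained in Theorem \ref{really}. First I would fix $R$ in $\alg_\Phi$ and, using Theorem \ref{really}, identify $\L_R$ with $\sl_2(\bar R)^R$, where $\bar R=R\times R$ carries the product (\ref{dupli}) so that $\sqrt{-1}=(0,1)\in\bar R$. The key point is that, since $\Phi$ has no square root of $-1$, the idempotent-induced decompositions $R^2=I\oplus J$ with $\operatorname{ann}_R(I)=\operatorname{ann}_R(J)=0$ of the ground ring $\bar R$ (viewed merely as an $R$-algebra) are now described by $\affaut(\bar\Phi)\cong\bm{\mu}_2$ (Remark \ref{langgrsch}), rather than by $\bm{Z}_2$; but $\frac12\in\Phi$ gives $\bm{\mu}_2\cong\bm{Z}_2$ (Remark \ref{standard_result}), so the constant-group factor in the final answer is still $\bm{Z}_2$.

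Next I would build the two families of automorphisms of $\sl_2(\bar R)^R$. The first comes from $\aut_R(\bar R)\cong\mu_2(R)$: any $\alpha\in\aut_R(\bar R)$ induces $\hat\alpha\in\aut_R(\sl_2(\bar R)^R)$ by applying $\alpha$ entrywise, and the lemma showing that $R$-ideals of $\sl_2(\bar R)^R$ coincide with $\bar R$-ideals of $\sl_2(\bar R)$ guarantees $\hat\alpha$ is well-defined on the restricted-scalars algebra. The second comes from the adjoint action: by Lemma \ref{noname} applied over the field $\bar\Phi$ (note $\bar\Phi\cong\Phi\times\Phi$ is not a field, so strictly one applies $\affpgl_2\cong\affaut(\sl_2(\Phi))$ over $\Phi$ together with base change to $\bar R$, or equivalently uses that $\affpgl_2(\bar\Phi)$ classifies automorphisms of $\sl_2(\bar R)$ as a $\bar R$-algebra) every $\bar R$-automorphism of $\sl_2(\bar R)$ is $\Ad(\bar p)$ for a unique $\bar p\in\PGL_2(\bar R)$. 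One checks the commutation relation $\hat\alpha\,\Ad(\bar p)=\Ad(\bar p^{\,\alpha})\,\hat\alpha$, and that $S_1=\{\hat\alpha\}\cong\mu_2(R)$ and $S_2=\{\Ad(\bar p)\}\cong\PGL_2(\bar R)$ intersect trivially (Lemma \ref{plafter}). This yields a homomorphism of affine group schemes $F\colon\affpgl_2(\bar\Phi)\rtimes\affaut(\bar\Phi)\to\affaut(\L_\Phi)$.

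To see $F$ is an isomorphism I would construct its inverse $G$ pointwise exactly as in Theorem \ref{complex_case}: given $\theta\in\aut_R(\L_R)=\aut_R(\sl_2(\bar R)^R)$, set $I:=\theta(\sl_2(R)\times 0)$, $J:=\theta(0\times\sl_2(R))$ (using $\bar R=R\oplus\iu R$); these are ideals with zero annihilators and $\sl_2(\bar R)=I\oplus J$, so Theorem \ref{geo} produces $\mathfrak a,\mathfrak b\triangleleft R$ with $R=\mathfrak a\oplus\mathfrak b$, $I=\sl_2(\mathfrak a)\times\sl_2(\mathfrak b)$, $J=\sl_2(\mathfrak b)\times\sl_2(\mathfrak a)$. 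The associated order-two $\tau_{\mathfrak a,\mathfrak b}=\hat\sigma$ for $\sigma\in\aut_R(\bar R)$ satisfies that $\tau_{\mathfrak a,\mathfrak b}\theta$ preserves both factors, hence equals $\Ad(\bar p)$ for a unique $\bar p\in\PGL_2(\bar R)$; then $\theta=\hat\sigma\,\Ad(\bar p)=\Ad(\bar p^{\,\sigma})\hat\sigma$ and we set $G_R(\theta):=(\,\overline{p^{\sigma}},\sigma\,)$. Uniqueness (Lemma \ref{plafter}) gives $F_RG_R=1=G_RF_R$, naturality in $R$ is routine, so $F$ is an isomorphism $\affpgl_2(\bar\Phi)\rtimes\bm{Z}_2\cong\affaut(\L_\Phi)$. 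Specializing $\Phi=\R$: $\bar\R=\C$ so $\affpgl_2(\bar\R)(\R)=\PGL_2(\C)$ and $\affaut(\bar\R)(\R)=\mu_2(\R)$, giving $\aut(\L_\R)\cong\PGL_2(\C)\rtimes\mu_2(\R)$. The main obstacle I anticipate is the bookkeeping in the smoothness/closed-points argument behind Lemma \ref{noname} when the base ring $\bar R$ is a nontrivial product: one must be careful that "automorphisms of $\sl_2(\bar R)$ over $\bar R$ are inner" is applied functorially over $\Phi$ (via base change $\Phi\to\bar\Phi$) and not naively over the non-field $\bar\Phi$; once that is set up correctly, the rest is a transcription of the complex case.
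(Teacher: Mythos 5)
Your construction of the homomorphism $F$ (the two families $S_1\cong\mu_2(R)$ acting entrywise via $\aut_R(\bar R)$ and $S_2\cong\PGL_2(\bar R)$ acting by $\Ad$, the relation $\hat\alpha\,\Ad(\bar p)=\Ad(\bar p^{\alpha})\hat\alpha$, trivial intersection via Lemma \ref{plafter}, and the identification of the constant factor through Remarks \ref{langgrsch} and \ref{standard_result}) is consistent with the paper. The genuine gap is in the surjectivity step, i.e.\ the construction of $G$. You set $I:=\theta(\sl_2(R)\times 0)$ and $J:=\theta(0\times\sl_2(R))$, claim these are ideals of $\sl_2(\bar R)^R$ with zero annihilators splitting the algebra, and invoke Theorem \ref{geo}. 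But under $\bar R=R\oplus\iu R$ the subspace $\sl_2(R)$ is only a \emph{subalgebra} of $\sl_2(\bar R)^R$, not an ideal (e.g.\ $[\iu h,e]=2\iu e\notin\sl_2(R)$), and $\iu\,\sl_2(R)$ is not even a subalgebra ($[\iu e,\iu f]=-h$); so already for $\theta=\mathrm{id}$ your $I$ and $J$ are not ideals. More fundamentally, no splitting of $\L_R$ into two proper ideals can exist in general: taking $R=\Phi$, the hypothesis that $\Phi$ has no square root of $-1$ makes $\Phi$ $2$-formally real, so $\L_\Phi$ is \emph{simple} by Theorem \ref{snss} (this is precisely why the real Lorentz algebra is simple). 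Theorem \ref{geo} moreover concerns $\sl_2(R^2)$ with the componentwise product on $R^2$ and a \emph{given} decomposition into ideals; neither its setting nor its hypothesis is available here, so the strategy of Theorem \ref{complex_case} cannot simply be transcribed — the twisted product (\ref{dupli}) on $\bar R$ is exactly what destroys the two-ideal picture.

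What is needed instead (and what the paper does) is a semilinearity argument for surjectivity: given $\theta\in\aut_R(\L_R)=\aut_R(\sl_2(\bar R)^R)$, one shows that $\theta(h),\theta(e),\theta(f)$ is again an $\bar R$-basis of $\sl_2(\bar R)$, and composing $\theta$ with the $\bar R$-linear map carrying this basis back to $h,e,f$ one deduces that $\theta$ is $\sigma$-semilinear for a unique $\sigma\in\aut_R(\bar R)\cong\mu_2(R)$; then $\hat\sigma^{-1}\theta$ is $\bar R$-linear, hence of the form $\Ad(\bar p)$ with $\bar p\in\PGL_2(\bar R)$ by Lemma \ref{noname} evaluated at the $\Phi$-algebra $\bar R$ (your caveat about applying it functorially over $\Phi$ rather than over the non-field $\bar\Phi$ is the right way to phrase this). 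This gives $\aut_R(\L_R)\cong\PGL_2(\bar R)\rtimes\aut_R(\bar R)$, and then the identifications you already made yield $\affaut(\L_\Phi)\cong\affpgl_2(\bar\Phi)\rtimes\bm{Z}_2$ and the specialization $\aut(\L_\R)\cong\PGL_2(\C)\rtimes\mu_2(\R)$. So replace your ideal-theoretic construction of $G$ by this semilinearity argument; the rest of your proposal can stand.
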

\begin{proof}
 Let $R$ be in $\alg_\Phi$ and $\theta\in\aut_R(\L_R)$. We know that $\L_R=\sl_2(\bar R)^R$ by Theorem \ref{really}.  
We consider the usual basis $\{h,e,f\}$ of the free $\bar R$-module $\sl_2(\bar R)$. Then it can be proved that $h':=\theta(h)$, $e':=\theta(e)$ and $f':=\theta(f)$
yield a basis $\{h',e',f'\}$ of the $\bar R$-module $\sl_2(\bar R)$. So the $\bar R$-linear map $\omega\colon\sl_2(\bar R)\to\sl_2(\bar R)$ induced 
by $h'\mapsto h$, $e'\mapsto e$ and $f'\mapsto f$ is an $\bar R$-linear automorphism and $\alpha:=\omega\theta$ is an $R$-linear automorphism of 
$\sl_2(\bar R)$ fixing $h$, $e$ and $f$. It is easy now to see that there is an automorphism $\sigma\in\aut_R(\bar R)$ such that
$\alpha(\lambda h)=\sigma(\lambda)h$ for any $\lambda\in\bar R$. From this, it can be proved that also $\alpha(\lambda e)=\sigma(\lambda)e$ and
$\alpha(\lambda f)=\sigma(\lambda)f$ for any $\lambda\in\bar R$. We have proved that $\theta$ is the composition of a $\bar R$-linear automorphism of
$\sl_2(\bar R)$ with an automorphism induced by an element in $\aut_R(\bar R)$. In other words 
$\aut_R(\L_R)\cong \aut_{\bar R}(\sl_2(\bar R))\rtimes\aut_R(\bar R)\cong\PGL_2(\bar R)\rtimes\aut_R(\bar R)$
and the affine group scheme $\affaut(\L_\Phi)$ is isomorphic to $\affpgl_2(\bar \Phi)\rtimes\affaut_\Phi(\bar\Phi)$ and taking into account
remarks \ref{standard_result}
and \ref{langgrsch} we have $\affaut(\L_\phi)\cong
\affpgl_2(\bar\Phi)\rtimes\bm{Z}_2$.
\end{proof}

\begin{co}
If $\Phi$ is a field of characteristic other than $2$, the affine group scheme $\affaut(\L_\Phi)$ is smooth.
\end{co}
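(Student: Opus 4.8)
The plan is to invoke the two structure theorems already established and then argue that smoothness is inherited. Since $\mathrm{char}\,\Phi\neq 2$ forces $\tfrac{1}{2}\in\Phi$, there are exactly two cases. If $\sqrt{-1}\in\Phi$, Theorem \ref{complex_case} gives $\affaut(\L_\Phi)\cong\affpgl_2^2\rtimes\bm{Z}_2$; if $\sqrt{-1}\notin\Phi$, Theorem \ref{real_case} gives $\affaut(\L_\Phi)\cong\affpgl_2(\bar\Phi)\rtimes\bm{Z}_2$. In both cases the scheme under study is a semidirect product $G\rtimes\bm{Z}_2$ of affine group schemes over $\Phi$, so it will suffice to show that (a) $G$ is smooth, (b) $\bm{Z}_2$ is smooth, and (c) a semidirect product of smooth affine $\Phi$-group schemes is smooth.

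Parts (b) and (c) are the routine ones, and I would dispatch them first. For (c): the underlying $\Phi$-scheme of $G\rtimes\bm{Z}_2$ is the product $G\times_\Phi\bm{Z}_2$ (only the group law is twisted), and a fibre product over $\Phi$ of smooth $\Phi$-schemes is smooth. For (b): $\bm{Z}_2$ is the finite constant group scheme, represented by the étale $\Phi$-algebra $\Phi^2=\Phi\times\Phi$, hence smooth over $\Phi$ in any characteristic. For the first case of (a), $\affpgl_2$ is smooth --- it was identified in Lemma \ref{noname} with the smooth affine group scheme $\affaut(\sl_2(\Phi))$ of dimension $3$ --- so $\affpgl_2^2$, being a product of smooth group schemes, is smooth as well.

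The only step with any content is the second case of (a), and it is also where the hypothesis reappears. Here $G=\affpgl_2(\bar\Phi)$ is the functor $R\mapsto\PGL_2(\bar R)$ with $\bar R=R\times R$ carrying the twisted multiplication of (\ref{dupli}); equivalently $\bar R=R\otimes_\Phi\bar\Phi$, so $G=\mathrm{Res}_{\bar\Phi/\Phi}\affpgl_2$ is the Weil restriction of $\affpgl_2$ along $\Phi\to\bar\Phi$. Because $\mathrm{char}\,\Phi\neq 2$ and $-1$ is not a square in $\Phi$, the $\Phi$-algebra $\bar\Phi$ is a separable quadratic field extension of $\Phi$ --- in particular finite étale --- and Weil restriction along a finite étale extension preserves smoothness of affine group schemes; hence $G$ is smooth. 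The point to be careful about is exactly this identification of $\bar\Phi$ as a separable (rather than split or inseparable) algebra, which is what legitimizes the Weil-restriction argument; granting it, all three ingredients (a), (b), (c) are in hand and $\affaut(\L_\Phi)$ is smooth.
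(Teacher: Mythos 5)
Your argument is correct, but it is not the route the paper takes. The paper's proof is a dimension count: it uses the two structure theorems only to reduce to computing $\dim\affpgl_2^2=6$ and $\dim\affpgl_2(\bar\Phi)=6$, the latter via an explicit Hopf-algebra presentation of $\affgl_2(\bar\Phi)$ (dimension $8$) and of the kernel $R\mapsto(\bar R)^\times$ of $\affgl_2(\bar\Phi)\to\affpgl_2(\bar\Phi)$ (dimension $2$), citing Milne's fibre criterion; smoothness then follows by matching this against the $6$-dimensional Lie algebra $\der(\L_\K)\cong\sl_2(\K)^2$ of Lemma \ref{carras} -- the same tangent-space criterion that the paper later uses in reverse ($9\neq 12$) to prove non-smoothness in characteristic $2$ -- and as a by-product it records the formula $\dim\aut(\L_\Phi)=6$ of equation (\ref{bmw}), which is reused in the Poincar\'e sections. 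You instead avoid all dimension computations and argue structurally: $\bm{Z}_2$ is \'etale, a semidirect product is the product scheme with a twisted group law, $\affpgl_2$ is smooth (Lemma \ref{noname}), and in the case $\sqrt{-1}\notin\Phi$ you recognize $\bar\Phi=\Phi[t]/(t^2+1)$ from (\ref{dupli}) as a separable quadratic field extension and $\affpgl_2(\bar\Phi)$ as the Weil restriction $\mathrm{Res}_{\bar\Phi/\Phi}\affpgl_2$, which is smooth because Weil restriction along a finite \'etale (indeed any finite locally free) extension preserves smoothness. Each step is sound, and your identification of where the hypothesis $\sqrt{-1}\notin\Phi$, $\mathrm{char}\,\Phi\neq2$ enters (separability of $\bar\Phi$) is exactly right. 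What your approach buys is a cleaner, characteristic-free conceptual proof that needs no Hopf-algebra presentations; what the paper's approach buys is the explicit value of $\dim\affaut(\L_\Phi)$, which your argument does not produce and which the paper needs elsewhere.
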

\begin{proof} If $\sqrt{-1}\in\Phi$ we apply Theorem \ref{complex_case} and then $\affaut(\L_\Phi)\cong\affpgl_2^2\rtimes \bm{Z}_2$ hence $\dim\affaut(\L_\Phi)=
\dim \affpgl_2^2 = 6$ since $\dim\bm{Z}_2=0$. If $\sqrt{-1}\not\in\Phi$ we apply Theorem \ref{real_case} and so 
$\affaut(\L_\Phi)\cong
\affpgl_2(\bar\Phi)\rtimes\bm{Z}_2$. Thus $\dim\affaut(\L_\Phi)=\dim \affpgl_2(\bar\Phi)$ (recall definition \ref{dupli}). Next we compute $\dim\affgl_2(\bar\Phi)$ where
$\affgl_2(\bar\Phi)(R):=\GL_2(\bar R)$. In fact $\affgl_2(\bar\Phi)$ is the affine group scheme represented by the Hopf algebra
$$H=\Phi[x_{11},x_{12},x_{21},x_{22},y_{11},y_{12},y_{21},y_{22},z_1,z_2]/I$$ where $I$ is the ideal generated by the polynomials
$pz_1-qz_2-1$ and $pz_2+qz_1$ being $$p=x_{11}x_{22}-x_{12}x_{21}-y_{11}y_{22}+y_{12}y_{21},
q=x_{11}y_{22}+x_{22}y_{11}-x_{12}y_{21}-x_{21}y_{12}.$$
So, applying again \cite[16.1 (a)]{Milne}, we have $\dim \affgl_2(\bar\Phi)=8$ and the kernel of the canonical epimorphism
$\affgl_2(\bar\Phi)\to \affpgl_2(\bar\Phi)$ is the affine group scheme $\alg_\Phi\to \grp$ such that $ R\mapsto (\bar R)^\times$.
The representing Hopf algebra of this functor is $\Phi[x_1,x_2,x_3,x_4]/J$ where $J$ is the ideal generated by the polynomials
$x_1x_3-x_2x_4-1$, $x_1x_4+x_2x_3$. Thus this kernel has dimension $2$ proving that $\dim \PGL_2(\bar\Phi)=6$.
\end{proof} 

To finish this section we observe that for an algebraically closed field $\Phi$ of characteristic other than $2$, one has  
\begin{equation}\label{bmw}
\dim\aut(\L_\Phi)=6.
\end{equation}

\section{Automorphisms of $\L_\K$ in the case of characteristic 2}


In this section the ground field $\K$ is algebraically closed of characteristic $2$ and
the Lie $\K$-algebra $\o(3;\K)$ (as introduced at the beginning of section \ref{secinco}), will be denoted simply by $\o(3)$ if
the reference to the field is not crucial . 
Also the notation $\O(3)$ will stand for the algebraic group  of $3\times 3$ matrices $M\in\GL_3(\K)$ such that $MM^t=1$. 
\begin{pr}
The algebraic group $\O(3)$ is connected and of dimension $3$.
\end{pr}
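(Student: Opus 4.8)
The plan is to identify $\O(3)$ with $\mathrm{SL}_2(\K)$ and then invoke the standard fact that $\mathrm{SL}_2(\K)$ is a connected algebraic group of dimension $3$. Write $V=\K^3$ equipped with the bilinear form $B(x,y)=x^t y$, so that $M\in\O(3)$ precisely when $B(Mx,My)=B(x,y)$ for all $x,y\in V$. The key characteristic-two fact is that $B(x,x)=\sum_i x_i^2=(\sum_i x_i)^2=(e^t x)^2$, where $e:=(1,1,1)^t$. Hence, for $M\in\O(3)$, the identity $(e^t Mx)^2=B(Mx,Mx)=B(x,x)=(e^t x)^2$ holds for all $x$; writing $e^t Mx=(M^t e)^t x$ and using that squaring is additive in characteristic two, this reads $\bigl((M^t e-e)^t x\bigr)^2=0$ identically in $x\in\K^3$, whence $M^t e=e$ since $\K$ has no nonzero nilpotents. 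Therefore $M$ stabilises the $2$-dimensional subspace $V_0:=e^\perp=\{x\colon e^t x=0\}$, and hence also its $B$-orthogonal complement $V_0^\perp$, which equals the line $\K e$ since $B$ is nondegenerate. As $B(e,e)=3=1\neq 0$ in characteristic two, we get $V=\K e\oplus V_0$ with $B|_{V_0}$ nondegenerate, and $B|_{V_0}$ is alternating because $B(v,v)=(e^t v)^2=0$ for every $v\in V_0$.

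Next I would pin down the action of an arbitrary $M\in\O(3)$ on the two summands. On $\K e$ we have $Me=\lambda e$ with $\lambda\in\K^\times$, and $\lambda^2 B(e,e)=B(Me,Me)=B(e,e)$ forces $\lambda^2=1$, hence $\lambda=1$ by uniqueness of square roots in characteristic two; thus $Me=e$. So $M=\mathrm{id}_{\K e}\oplus M|_{V_0}$ with $M|_{V_0}$ preserving $B|_{V_0}$, i.e.\ $M|_{V_0}\in\mathrm{Sp}(V_0,B|_{V_0})$. Conversely, for any $N\in\mathrm{Sp}(V_0,B|_{V_0})$ a direct check using $B(e,V_0)=0$ and $B(e,e)=1$ shows that $\mathrm{id}_{\K e}\oplus N$ lies in $\O(3)$. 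The assignments $M\mapsto M|_{V_0}$ and $N\mapsto\mathrm{id}_{\K e}\oplus N$ are mutually inverse group homomorphisms which, after fixing a basis of $V$ adapted to the decomposition $V=\K e\oplus V_0$, are given by polynomial formulas in the matrix entries; hence they are mutually inverse morphisms of algebraic groups and $\O(3)\cong\mathrm{Sp}(V_0,B|_{V_0})$.

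Finally I would compute $\mathrm{Sp}(V_0,B|_{V_0})$ explicitly. Choosing the basis $u_1=(1,1,0)^t$, $u_2=(1,0,1)^t$ of $V_0$, one finds $B(u_1,u_1)=B(u_2,u_2)=0$ and $B(u_1,u_2)=1$, so the Gram matrix of $B|_{V_0}$ in this basis is $J=\bigl(\begin{smallmatrix}0&1\\1&0\end{smallmatrix}\bigr)$, and a one-line $2\times 2$ computation shows that $N^t J N=J$ is equivalent to $\det N=1$. Thus $\mathrm{Sp}(V_0,B|_{V_0})=\mathrm{SL}_2(\K)$, so $\O(3)\cong\mathrm{SL}_2(\K)$ is connected of dimension $3$. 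I expect the only genuinely delicate point to be the characteristic-two bookkeeping — in particular the step forcing $M$ to \emph{fix} rather than merely scale $e$, and the observation that here $B$ degenerates onto a $2$-dimensional symplectic space. (One may also note that, as a group scheme, $\O(3)$ is non-reduced in characteristic two, its tangent space at the identity being the $6$-dimensional space of symmetric matrices; but connectedness and dimension depend only on the underlying reduced variety $\mathrm{SL}_2$.)
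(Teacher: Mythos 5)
Your proposal is correct, but it follows a genuinely different route from the paper's. The paper proceeds by direct parametrization: writing out the condition $MM^t=1$ in characteristic $2$, it shows every $M\in\O(3)$ is determined by four entries $a,b,s,t$ (all row and column sums being $1$) subject to the single relation $1+a+b+s+t+at+bs=0$; the (reduced) coordinate ring is then $\K[a,b,s,t]$ modulo an irreducible polynomial, hence a domain with no nontrivial idempotents, which gives connectedness via the criterion in Milne, and dimension $3$ since this ring is finite over the polynomial subalgebra $\K[a,b,s]$. You instead exploit the characteristic-two degeneration structurally: from $B(x,x)=(e^t x)^2$ you deduce that every $M\in\O(3)$ satisfies $M^te=e$ and $Me=e$, so it preserves the orthogonal splitting $\K^3=\K e\oplus e^\perp$ on which $B$ restricts to a nondegenerate alternating form, giving $\O(3)\cong\mathrm{Sp}(e^\perp,B)\cong\mathrm{SL}_2(\K)$ (on reduced points), from which connectedness and dimension $3$ are inherited; all the individual steps (stability of $e^\perp$ and its double perp, $\lambda^2=1\Rightarrow\lambda=1$, the Gram-matrix computation identifying $\mathrm{Sp}_2$ with $\mathrm{SL}_2$) check out. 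In fact your relations $Me=e$, $M^te=e$ are precisely what underlies the shape of the paper's parametrized matrix, so your argument explains their computation conceptually. What the paper's approach buys is a self-contained presentation of the reduced coordinate ring, exactly in the form needed to quote the connectedness and dimension criteria it cites; what yours buys is the exceptional identification of $\O(3)_{\mathrm{red}}$ with $\mathrm{SL}_2$ in characteristic $2$, together with extra structural information (smoothness of the reduced group, the abstract group structure), at the cost of invoking the standard facts about $\mathrm{SL}_2$. Your closing caveat is also correct and consistent with the paper's usage: the scheme cut out by $MM^t=1$ is non-reduced in characteristic $2$ (its Lie algebra is the $6$-dimensional space of symmetric matrices, matching the paper's computation of $\der\o(3,\K)$ being $5$-dimensional only after imposing trace zero on the automorphism group of the Lie algebra, not on $\O(3)$ itself), whereas the statement concerns the group of $\K$-points, for which connectedness and dimension are insensitive to nilpotents.
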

\begin{proof}
First of all, note that $\hbox{O}(3)$ is a connected algebraic group. In fact, $M\in\hbox{O}(3)$ if and only if 
$$M=\begin{pmatrix} a & b & 1+a+b\cr
    s & t & 1+s+t\cr
 1+a+s & 1+b+t  & 1+ a +b+ s+t    
   \end{pmatrix},$$
where $1+a+b+s+t+a t+b s=0$. Thus, as an affine variety the representing Hopf algebra of the affine group scheme 
whose group of $\K$-points is $\hbox{O}(3)$ is 
$H:=\K[a,b,s,t]/I$ where
$I$ is the ideal generated by $1+a+b+s+t+a t+b s$. But since the ideal $I$ is prime $H$ has no idempotents other that $0$ and $1$.
Thus, taking into account \cite[Proposition 3.2 and Definition 3.3, p. 208]{Milne}, $\hbox{O}(3)$ is connected. Also by using this algebraic geometry ideas we recognize the fact that 
\begin{equation}\label{dimotres}
\dim\hbox{O}(3)=3.
\end{equation}
Indeed,
the set $S=\{a,b,s\}$ satisfies that  $\K[S]$ is a polynomial algebra and $H$ is finitely generated as a $\K[S]$-module. (see
\cite[16.1 (a)]{Milne}).\end{proof}

Consider the three-dimensional ideal $I=\langle x_1,x_2,x_3\rangle$ of $\L_\K$ (see Proposition \ref{hideput})
and $f\colon \L_\K\to\L_\K$ an automorphism. 
Then $f(I)=I$ and we can consider the induced map $\bar f\colon \L_\K/I\to\L_\K/I$. Since
$\L_\K/I\cong \o(3)$ we may identify these algebras and consider $\bar f\colon\o(3)\to\o(3)$. It is routinary 
to prove that $\bar f\in\aut(\o(3))\cong\hbox{O}(3)$. Thus we have a map $\phi\colon\aut(\L_\K)\to\hbox{O}(3)$
such that $\phi(f)$ is the image of $\bar f$ in $\O(3)$. 
and it is straightforward to see that $\phi$ is a group homomorphism.
We want to prove that $\phi$ is an epimorphism. 

\begin{pr} $\phi$ is an epimorphism.
\end{pr}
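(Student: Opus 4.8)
The plan is to exhibit, for each generator of $\O(3)$, an automorphism of $\L_\K$ mapping onto it under $\phi$. Since $\L_\K$ splits as a semidirect-like extension $0\to I\to\L_\K\to\o(3)\to 0$ with $I$ abelian three-dimensional and $\L_\K/I\cong\o(3)$, it is natural to look first for a linear section. Using the basis $\{b_1,\dots,b_6\}$ of $\L_\K$ from Proposition \ref{hideput}, the classes $\bar b_1,\bar b_2,\bar b_3$ form a basis of $\o(3)$, and one checks directly from the multiplication table in Figure \ref{ttaudi} (read modulo $I$) that the span $\langle b_1,b_2,b_3\rangle$ is \emph{not} a subalgebra, but that the products $[b_i,b_j]$ land in $I+\langle b_k\rangle$ in a controlled way. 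The first step is therefore to produce, for a convenient set of generators $g\in\O(3)\cong\aut(\o(3))$, an explicit $\K$-linear map $f\colon\L_\K\to\L_\K$ fixing $I$, inducing $g$ on the quotient, and respecting the bracket; this amounts to solving a small linear system for the action of $f$ on $b_4,b_5,b_6$ once its action on $b_1,b_2,b_3$ is prescribed to lift $g$.

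Concretely I would reduce to two families of generators. First, the ``rotation'' type: since $\o(3)\cong\o(3;\K)$ has the antisymmetric-matrix description, $\aut(\o(3))\cong\O(3)$ acts by conjugation $M\mapsto PMP^t$ with $P\in\O(3)$, and these conjugations come from conjugations on the ambient $4\times 4$ matrices. Here I would use that $\L_\K=\o(4;\K)$ in characteristic two (the remark before Lemma \ref{one}), so that $\O(4;\K)$ acts on $\L_\K$ by $M\mapsto PMP^{-1}$; restricting to $P\in\O(3;\K)$ embedded in the lower-right block of $\O(4;\K)$ gives automorphisms of $\L_\K$ preserving $I$ and inducing the full conjugation action of $\O(3)$ on $\o(3)$. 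That already hits a large connected subgroup. Second, to reach all of $\O(3)$ one needs a component not obtained this way (a reflection/determinant $-1$ element); but in characteristic two $\O(3)$ is connected, as the proposition just above the statement shows, so in fact the conjugation action of $\O(3;\K)\hookrightarrow\O(4;\K)$ on $\L_\K$ already surjects onto $\aut(\o(3))\cong\O(3)$, and we are done once we verify these conjugations preserve $I$.

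So the key steps, in order, are: (1) fix the identification $\L_\K=\o(4;\K)$ and the ideal $I=\langle x_1,x_2,x_3\rangle$ from Proposition \ref{hideput}; (2) embed $\O(3;\K)$ into $\O(4;\K)$ in the lower-right $3\times 3$ block and show the corresponding inner automorphisms $M\mapsto PMP^{-1}$ of $\o(4;\K)$ preserve $I$ — this is a direct check since $P$ fixes the first coordinate vector, and the ideal $I$ is characterised intrinsically (it is the unique minimal ideal, hence invariant under \emph{every} automorphism, which makes this step automatic); (3) identify the induced map on $\L_\K/I\cong\o(3)$ with the conjugation $N\mapsto PNP^{-1}$ on $\o(3;\K)$, i.e.\ with the image of $P$ under $\O(3;\K)\cong\O(3)\cong\aut(\o(3))$; (4) invoke the classical fact that $\O(3)\to\aut(\o(3))$ is onto (equivalently, cite the connectedness and dimension count just established together with the standard isomorphism $\aut(\o(3))\cong\O(3)$) to conclude $\phi$ is surjective.

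The main obstacle I anticipate is not the surjectivity of $\O(3)\to\aut(\o(3))$ (standard) but the bookkeeping in step (2)--(3): one must be careful that in characteristic two $\o(4;\K)$ consists of \emph{symmetric} trace-free matrices (not antisymmetric, since $+$ and $-$ coincide and the diagonal is forced to vanish by the defining relation $M+M^t=0$ read as $M=M^t$ with zero diagonal in the relevant normalisation), so the embedding $\O(3;\K)\hookrightarrow\O(4;\K)$ and the block-conjugation formula need to be checked against the actual table in Figure \ref{ttaudi} rather than assumed by analogy with the real case. Once the bracket-preservation and $I$-invariance are confirmed on the six basis vectors $b_1,\dots,b_6$ — a finite computation — the homomorphism property of $\phi$ (already noted) plus the explicit lifts give that $\phi$ is onto, completing the proof.
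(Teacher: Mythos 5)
Your proposal is correct, but it reaches surjectivity by a genuinely different mechanism than the paper. The paper identifies $\L_\K$ with $V\times V$ (where $V=\K^3$ carries the characteristic-two cross product), and, given $g\in\aut(\o(3))$ with orthogonal rows $a_1,a_2,a_3$, writes down an explicit linear lift $f(i,0)=(a_1,\alpha_1 a_1)$, $f(j,0)=(a_2,\alpha_2a_2)$, $f(k,0)=(a_3,\alpha_3a_3)$, $f(0,i)=(0,\alpha_0a_1)$, etc., with scalars chosen so that $\alpha_1+\alpha_2+\alpha_3\neq 1$ (a choice that uses the infinitude of $\K$); one then checks directly that $f\in\aut(\L_\K)$ and $\phi(f)=g$. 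You instead lift through the block embedding $Q\mapsto\diag(1,Q)$ of $\O(3;\K)$ into $\O(4;\K)$ and act by conjugation on $\L_\K=\o(4;\K)$; invariance of $I$ is indeed automatic from its uniqueness (the paper records $f(I)=I$ for every automorphism just before defining $\phi$), and the computation $\diag(1,Q)\,s_{1j}\,\diag(1,Q)^t=\sum_k Q_{kj}\,s_{1k}$ shows that the induced automorphism of $\L_\K/I\cong\o(3)$ has matrix exactly $Q$ in the basis $\{\bar b_1,\bar b_2,\bar b_3\}$, so every element of $\O(3)$ is hit. Two remarks. First, the one point that genuinely must be verified (you rightly flag it) is that conjugation by an orthogonal matrix preserves the symmetric zero-diagonal matrices in characteristic two: it does, since for $M=M^t$ with zero diagonal the diagonal entries of $PMP^t$ are $\sum_{j<k}2P_{ij}P_{ik}M_{jk}=0$. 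Second, the appeal to connectedness of $\O(3)$ is neither needed nor, on its own, sufficient (a connected group can contain proper closed connected subgroups as images); what closes the argument is precisely the identification of the induced matrix as $Q$ from your steps (3)--(4). Your route buys a cleaner, choice-free argument valid over any field of characteristic two and in fact yields a homomorphic section $\O(3)\to\aut(\L_\K)$ of $\phi$, so the sequence (\ref{ses}) splits; the paper's construction is more computational but produces, for each $g$, a whole family of lifts parametrized by the scalars $\alpha_i$, which sits naturally beside its subsequent explicit description of $\ker\phi$.
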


\begin{proof}
Consider the $3$-dimensional $\K$-vector space $V:=\K^3$ endowed with the cross-product, that is 
$x\wedge y=(s_2t_3+s_3t_2,s_1t_3+s_3t_1,s_1t_2+s_2t_1)$ where
$x=(s_1,s_2,s_3)$, $y=(t_1,t_2,t_3)$. We will have the ocassion to use also the inner product $\langle x,y\rangle:=s_1t_1+s_2t_2+s_3t_3$.
We know that $V$ is a Lie algebra relative to the product $\wedge$. Furthermore, $V\times V$ is a Lie algebra relative to
$$[(x,y),(z,t)]:=(x\wedge z,x\wedge z+x\wedge t+y\wedge z), \forall x,y,z,t\in V.$$
It is easy to see that $V\times V\cong \L_\K$ where $0\times V$ corresponds to the $3$-dimensional ideal $I$ of $\L_\K$.
More precisely, if we denote by $i,j,k$ the vectors of the canonical basis of $V$, then the isomorphism acts in the form 
\begin{eqnarray}
(i,0)\mapsto b_1,\cr
(j,0)\mapsto b_2,\cr
(k,0)\mapsto b_3,\cr
(0,i)\mapsto x_1,\cr
(0,j)\mapsto x_2,\cr
(0,k)\mapsto x_3.
\end{eqnarray}

So take now an automorphism $g\colon \o(3)\to\o(3)$. We know that relative to the basis $\{b_1,b_2,b_3\}$ of $\o(3)$ such that $[b_i,b_j]=b_k$ (cyclically),
the matrix of $g$ is orthogonal. So the rows of the matrix of $g$ relative to the mentioned basis are three vectors $a_i\in V$, $i=1,2,3$ such that 
$a_i\wedge a_j=a_k$ (cyclically) and $\langle a_i,a_i\rangle =1$ (also $\langle a_i,a_j\rangle=0$ if $i\ne j$).
Now define $f\colon V\times V\to V\times V$ such that 
\begin{eqnarray}
f(i,0)=(a_1,\alpha_1 a_1), &  \text{where } \alpha_1\in \K^\times,\cr
f(j,0)=(a_2,\alpha_2 a_2), &  \text{where } \alpha_2\in \K^\times,\cr
f(k,0)=(a_3,\alpha_3 a_3), &  \text{where } \alpha_3\in \K^\times,
\end{eqnarray}
and the scalars satisfy $\alpha_1+\alpha_2+\alpha_3\ne 1$ (this choice is possible since $\K$ is an infinite field).
Next take $\alpha_0=\alpha_1+\alpha_2+\alpha_3+1\ne 0$ and define 
\begin{eqnarray}
f(0,i)=(0,\alpha_0 a_1), \cr
f(0,j)=(0,\alpha_0 a_2), \cr
f(0,k)=(0,\alpha_0 a_3).
\end{eqnarray}
Now it can be proved that $f$ induces an automorphism of $\L_\K\cong V\times V$ and that $\phi(f)=\bar f=g$.  Thus $\phi$ is an epimorphism.
\end{proof}

Thus $\phi$ is an epimorphism and we define $G:=\ker(\phi)$.
So, we have a short exact sequence 
\begin{equation}\label{ses}
1\to G\to\aut(\L_\K)\to\hbox{O}(3)\to 1
\end{equation}
and we would like to find out more about the group $G$. The elements  $f\in G$
verify that $f(b_i)=b_i+x^{(i)}$ where each $x^{(i)}\in I$ ($i=1,2,3$). If we assume that
$f(x_i)=\sum_j a_{ij}x_j$ and take into account the conditions $[x_i,b_i]=0$ and $[x_i,b_j]=x_k$ (this last
assertion meaning that $i\ne j$ and $k\in\{1,2,3\}\setminus\{i,j\}$), then we find that
$a_{ij}=0$ for $i\ne j$ and $a_{11}=a_{22}=a_{33}$. Thus, the matrix of $f$ in the basis
$\{x_1,x_2,x_3,b_1,b_2,b_3\}$ is of the form
$$\begin{pmatrix}
a_{11} & 0 & 0 & \vline &  &   & \cr
0 & a_{11} & 0 & \vline &   & 0   & \cr
0 & 0 & a_{11} & \vline &  &   & \cr
\hline
 & &  &\vline& 1 & 0 & 0\cr
  &  * & & \vline &0 & 1 & 0\cr
 & &  & \vline &0 & 0 & 1\cr
\end{pmatrix}
.$$
We assume now that $x^{(i)}=\sum_j \lambda_{ij}x_j$.
Then, since  $[b_1,b_2]=b_3$, applying $f$ we have
$[b_1+x^{(1)},b_2+x^{(2)}]=b_3+x^{(3)}$. 
Thus, $[b_1,x^{(2)}]+[b_2,x^{(1)}]=x^{(3)}$ and from here we get 
$$\begin{cases}
\lambda_{31}=\lambda_{13},\cr
\lambda_{32}=\lambda_{23},\cr
\lambda_{33}=\lambda_{11}+\lambda_{22}.
\end{cases}$$
Similarly applying $f$ to $[b_2,b_3]=b_1$ we get
$$\begin{cases}
\lambda_{12}=\lambda_{21},\cr
\lambda_{13}=\lambda_{31},\cr
\lambda_{11}=\lambda_{22}+\lambda_{33},
\end{cases}$$
and so cyclically we conclude that the matrix of $f$ is 
$$\begin{pmatrix}
a_{11} & 0 & 0 & \vline &  &   & \cr
0 & a_{11} & 0 & \vline &   & 0   & \cr
0 & 0 & a_{11} & \vline &  &   & \cr
\hline
 \lambda_{11} & \lambda_{12} & \lambda_{13}  &\vline& 1 & 0 & 0\cr
 \lambda_{12} & \lambda_{22} & \lambda_{23} & \vline &0 & 1 & 0\cr
 \lambda_{13} & \lambda_{23} & \lambda_{11}+\lambda_{22} & \vline &0 & 0 & 1\cr
\end{pmatrix}
.$$
So using $3\times 3$ blocks we see that $G=\left\{\begin{pmatrix}a\cdot 1_3 & 0\cr 
M & 1_3\end{pmatrix}\colon a \in \K^\times, M^t=M, \hbox{tr}(M)=0\right\}$.
Thus, taking into account equation (\ref{dimotres}), since $\dim(G)=6$, from
the existence of the short exact sequence (\ref{ses}), we conclude that  $$\dim(\aut(\L_\K))=9.$$
This fact is in contrast with the results for characteristic $\ne 2$, where as a consequence of
Theorems \ref{complex_case} and \ref{real_case} the dimension of the automorphism group of Lorentz type algebras is $6$.


\begin{theo}
The algebraic group $\aut(\L_\K)$ is connected and $9$-dimensional.
\end{theo}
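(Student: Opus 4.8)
The plan is to read the short exact sequence (\ref{ses}) as a sequence of (affine) algebraic groups and to deduce connectedness of $\aut(\L_\K)$ from the connectedness of its two outer terms; the value $\dim\aut(\L_\K)=9$ has already been obtained above from (\ref{ses}), from $\dim\O(3)=3$ (see (\ref{dimotres})) and from $\dim G=6$, so the only thing still requiring an argument is connectedness.

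First I would establish that $G$ itself is a connected algebraic group. Using the explicit description $G=\left\{\begin{pmatrix}a\cdot 1_3 & 0\\ M & 1_3\end{pmatrix}\colon a\in\K^\times,\ M^t=M,\ \hbox{tr}(M)=0\right\}$, the assignment $(a,M)\mapsto\begin{pmatrix}a\cdot 1_3 & 0\\ M & 1_3\end{pmatrix}$ is an isomorphism of affine varieties $\K^\times\times W\to G$, where $W\subset\gl_3(\K)$ is the $5$-dimensional linear subspace of symmetric trace-zero matrices. Since $W$ is an affine space and $\K^\times$ is an irreducible variety, the product $\K^\times\times W$ is irreducible, hence $G$ is irreducible and in particular connected (and of dimension $1+5=6$, in agreement with the dimension count above).

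Then I would invoke the standard fact that if $1\to N\to H\to Q\to 1$ is an exact sequence of algebraic groups with $N$ and $Q$ connected then $H$ is connected, applied to $N=G$, $H=\aut(\L_\K)$, $Q=\O(3)$. For self-containedness I would include the short proof: writing $H^\circ$ for the identity component of $H$, the subgroup $H^\circ\cap G$ is open and closed in $G$ and contains $1$, so $G\subset H^\circ$ by connectedness of $G$; consequently the (finite) component group $H/H^\circ$ is a quotient of $H/G\cong\O(3)$, hence it is both finite and connected, hence trivial, i.e. $H=H^\circ$ is connected.

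I do not anticipate a serious obstacle here: the only point needing care is that (\ref{ses}) must be understood as an exact sequence of algebraic groups, that is, that $\phi\colon\aut(\L_\K)\to\O(3)$ is a morphism of algebraic groups with closed normal kernel $G$ and that $\aut(\L_\K)/G\cong\O(3)$. This is justified by the fact that $\phi$ is the polynomial map induced on the quotient $\L_\K/I\cong\o(3)$ together with $\aut(\o(3))\cong\O(3)$ as algebraic groups and the explicit identification of $G$ as a closed subgroup obtained above; everything else is the routine structure theory of linear algebraic groups recalled in this plan.
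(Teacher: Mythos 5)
Your proposal is correct and follows essentially the same route as the paper: the dimension count from the sequence (\ref{ses}) is taken as already done, connectedness of $G$ is obtained from the isomorphism $G\cong\K^\times\times\{M\colon M^t=M,\ \hbox{tr}(M)=0\}$, and then one uses that an extension of a connected group ($\O(3)$) by a connected group ($G$) is connected. The only difference is that you prove this last fact by the standard identity-component argument instead of citing \cite[Proposition 3.11, p. 210]{Milne}, which is a harmless variation.
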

\begin{proof}
We already know that $\dim \aut(\L_\K)=9$. For the connectedness property we  will apply \cite[Proposition 3.11, p. 210]{Milne} to the above short exact sequence (\ref{ses}). We need to show that $G$ is connected.
But, $G\cong \K^\times\times\mathcal{E}$ where $\mathcal E$ is a vector space (regarded as an affine variety).
Concretely, $\mathcal E$ is the space of traceless symmetric matrices. Thus $\mathcal E$ is connected and so is $G$.
Consequently \cite[Proposition 3.11, p. 210]{Milne} implies the connectedness of $\aut(\L_\K)$. 
\end{proof}

\section{Derivation algebra of $\L_\K$ in characteristic two}
In this section, we will consider the identification of $\L_\K$ with $V\times V$ as in the previous section. 
\begin{theo}\label{nine}
For a field $\K$ of characteristic $2$ we have:
 $$\dim(\der(\L_\K))=12.$$
\end{theo}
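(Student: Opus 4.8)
The plan is to compute $\der(\L_\K)$ directly, using the realization of $\L_\K$ as $V\times V$ from the previous section. In characteristic two the extension $0\to I\to\L_\K\to\o(3)\to 0$ splits, so one may write $\L_\K=\mathfrak g\oplus I$, where $I$ is the $3$-dimensional abelian ideal of Proposition \ref{hideput} (the copy $0\times V$) and $\mathfrak g$ is a complementary subalgebra isomorphic to $\o(3)$ (for instance the diagonal $\{(x,x):x\in V\}$); as a $\mathfrak g$-module $I$ is the adjoint module of $\o(3)$, as the multiplication table shows. A derivation $d$ is then recorded, relative to this splitting, by four linear maps $A_1\colon\mathfrak g\to\mathfrak g$, $A_2\colon\mathfrak g\to I$, $C_1\colon I\to\mathfrak g$, $C_2\colon I\to I$ with $d(g)=A_1g+A_2g$ and $d(v)=C_1v+C_2v$, and I would determine the space of admissible quadruples.

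First I would impose the derivation identity on the three types of pairs. On $\mathfrak g\times\mathfrak g$ it forces $A_1\in\der(\o(3))$ and forces $A_2$ to be a $1$-cocycle of $\mathfrak g$ with values in $I$; since $I$ is the adjoint module, a $1$-cocycle is just a derivation, so $A_2\in\der(\o(3))$ too, and by the lemma preceding Proposition \ref{hideput} each of $A_1,A_2$ varies in a $5$-dimensional space. On $\mathfrak g\times I$, the $\mathfrak g$-components force $C_1$ to be a homomorphism of $\mathfrak g$-modules $I\to\mathfrak g$; as the adjoint module of the simple algebra $\o(3)$ is absolutely irreducible, $C_1$ is a scalar multiple of the canonical isomorphism $I\cong\mathfrak g$, a $1$-dimensional choice; the $I$-components yield the constraint $[\,C_2,\ad(g)\,]=\ad(A_1g)$ for every $g\in\mathfrak g$, the adjoint action being that on $I$. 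On $I\times I$, after substituting the form of $C_1$ just obtained, the identity collapses to $C_1(v)\wedge v'+C_1(v')\wedge v=0$, which holds because $v\wedge v'=v'\wedge v$ and $2=0$ in $\K$; so this case is vacuous. Next I would solve the constraint on $C_2$: for any $A_1\in\der(\o(3))$ the operator $A_1$ itself, acting on $I$ through $I\cong\o(3)$, is a solution — this being the identity $[A_1,\ad(g)]=\ad(A_1g)$ valid for every derivation — so the solution set is the coset $A_1+\mathrm{End}_{\mathfrak g}(I)=A_1+\K\,\mathrm{id}$, contributing one parameter beyond $A_1$. Adding the independent parameters $A_1$ ($5$), $A_2$ ($5$), $C_1$ ($1$) and the residual freedom in $C_2$ ($1$) gives $\dim\der(\L_\K)=12$.

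The point I expect to be the main obstacle is the observation that, in characteristic two, $I$ need \emph{not} be invariant under an arbitrary derivation: the component $C_1$ can be nonzero, so one may not simply pass to the quotient $\o(3)$ and run the naive exact sequence $0\to\{d:d(\L_\K)\subseteq I\}\to\der(\L_\K)\to\der(\o(3))$ (whose outer terms have dimensions $6$ and $5$, yielding the wrong value $11$); the missing dimension is exactly $\dim\mathrm{Hom}_{\mathfrak g}(I,\mathfrak g)=1$. The ideal $I$ is nevertheless invariant under every automorphism, being the unique minimal ideal, so the gap with $\dim\aut(\L_\K)=9$ merely reflects that $\affaut(\L_\K)$ is not smooth. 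The rest is routine: keeping track of the three families of constraints and of which terms drop out because $2=0$, together with the facts $Z^1(\o(3),\text{adjoint})=\der(\o(3))$ and $\mathrm{End}_{\o(3)}(\text{adjoint})=\K$ (both consequences of simplicity of $\o(3)$) and the value $\dim\der(\o(3))=5$.
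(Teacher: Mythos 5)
Your argument is correct and arrives at the right count, but it is organized differently from the paper's proof. The paper works with the same model $V\times V$, yet splits it as the vector--space direct sum $(V\times 0)\oplus(0\times V)$ (note $V\times 0$ is \emph{not} a subalgebra), writes $d(x,0)=(\alpha(x),\beta(x))$, $d(0,x)=(\gamma(x),\delta(x))$, and solves the five resulting identities by explicit matrix computations, exhibiting the twelve free parameters directly ($\lambda$ with $\gamma=\lambda\,\mathrm{id}$, five for $\alpha$, one for $\delta$, five for $\beta$). You instead split over the diagonal copy $\mathfrak g=\{(x,x)\}$ of $\o(3)$ -- a subalgebra precisely because $2=0$ -- and identify the four blocks structurally: $A_1\in\der(\o(3))$, $A_2\in Z^1(\mathfrak g,I)=\der(\o(3))$, $C_1\in\mathrm{Hom}_{\mathfrak g}(I,\mathfrak g)$, $C_2\in A_1+\mathrm{End}_{\mathfrak g}(I)$, giving $5+5+1+1=12$; your observation that $I$ need not be $d$-invariant (your $C_1$-block) corresponds exactly to the paper's free parameter $\lambda$ in $\gamma$, and explains why the naive bound $11$ is wrong. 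Your route buys an almost computation-free count, needing only $\dim\der(\o(3))=5$ from the lemma preceding Proposition \ref{hideput}; the paper's route buys the explicit matrices of all derivations. One point to tighten: $\mathrm{End}_{\mathfrak g}(I)=\K\,\mathrm{id}$ and $\mathrm{Hom}_{\mathfrak g}(I,\mathfrak g)=\K\,\iota$ do not follow from simplicity of $\o(3)$ alone (simplicity only makes the commuting algebra a field, possibly larger than $\K$, since $\K$ is not assumed algebraically closed here); you need the centrality of $\o(3)$, i.e.\ that the commutant of $\{\ad(b_1),\ad(b_2),\ad(b_3)\}$ consists of scalars, which is a two-line check valid over any field of characteristic $2$. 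With that verified, every step of your outline goes through in the stated generality.
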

\begin{proof}
Let $d\in\der(\L_\K)$ where $\K$ is a field of characteristic $2$, so $d\colon V\times V\to V\times V$. There are four linear maps $\alpha, \beta, \gamma, \delta$ such that $d(x,0)=(\alpha(x),\beta(x))$ and $d(0,x)=(\gamma(x),\delta(x))$. Taking in to account that $d$ is a derivation, we get the following set of identities

\begin{enumerate}
\item\label{pewone} $x\wedge \alpha (y)+\alpha (x)\wedge
   y+\alpha (x\wedge y)+\gamma (x\wedge y)=0$
\item\label{pewtwo} $x\wedge \alpha (y)+\alpha (x)\wedge
   y+x\wedge \beta (y)+\beta (x)\wedge
   y+\beta (x\wedge y)+\delta (x\wedge y)=0$,
\item\label{pewthree} $x\wedge \gamma (y)+\gamma (x\wedge y)=0$,
\item\label{pewfour} $\alpha (x)\wedge y+x\wedge \gamma
   (y)+x\wedge \delta (y)+\delta (x\wedge
   y)=0$,
\item\label{pewfive} $x\wedge \gamma (y)+\gamma (x)\wedge y=0$.
\end{enumerate}

An straightforward computation reveals that the identity \ref{pewfive} implies the existence of a scalar $\lambda \in \K$ such that $\gamma(x)=\lambda x$ for all $x \in V$. As a consequence the identity  \ref{pewthree} is automatically satisfied.
 Now, since we now $\gamma$, we may determine $\alpha$ using equation \ref{pewone}. From this, we get that the matrix of $\alpha$, in canonical  basis, is
 $$\left(
\begin{array}{ccc}
 b_{1,1} & b_{1,2} & b_{1,3} \\
 b_{1,2} & b_{2,2} & b_{2,3} \\
 b_{1,3} & b_{2,3} & \lambda
   +b_{1,1}+b_{2,2}
\end{array}
\right).$$
From the equation \ref{pewfour} we obtain that the matrix of $\delta$ is of the form:
$$\left(
\begin{array}{ccc}
 c_{1,1} & b_{1,2} & b_{1,3} \\
 b_{1,2} & b_{1,1}+b_{2,2}+c_{1,1} &
   b_{2,3} \\
 b_{1,3} & b_{2,3} & \lambda
   +c_{1,1}+b_{2,2}
\end{array}
\right).$$
Finally, equation \ref{pewtwo} gives that the matrix of $\beta$ is of the form:
$$\left(
\begin{array}{ccc}
 f_{1,1} & f_{1,2} & f_{1,3} \\
 f_{1,2} & f_{2,2} & f_{2,3} \\
 f_{1,3} & f_{2,3} & \lambda
   +b_{1,1}+c_{1,1}+f_{1,1}+f_{2,2}
\end{array}
\right).$$
So the free parameters appearing in $d$ are: $$\lambda, b_{11},b_{22},b_{12}, b_{13}, b_{23}, c_{11},f_{11},f_{22},f_{12},f_{13},f_{23},$$ hence $$\dim(\der(\L_\K))=12.$$

\end{proof}

\subsection{Non-smoothness of $\aut(\L_\Phi)$ in characteristic $2$}

Consider now the affine group scheme $\affaut(\L_\Phi)\colon\alg_\Phi\to\grp$, where $\Phi$ is a field of characteristic $2$.
The dimension of this algebraic group is $\dim\aut(\L_\K)$ where $\K$ is the algebraic closure of $\Phi$. As we have proved
in previous sections $\dim\aut(\L_\K)=9$ hence $\dim(\affaut(\L_\Phi))=9$. 
Of course, we have $\Lie(\affaut(\L_\Phi))=\der(\L_\Phi)$ and $\dim(\Lie(\affaut(\L_\Phi)))=\dim(\der(\L_\Phi))=\dim(\der(\L_\K))=12$ 
by Theorem \ref{nine}. Thus we have 
$$\dim(\Lie(\affaut(\L_\Phi)))=12$$ 
and so the affine group scheme $\affaut(\L_\Phi)$ is not smooth.


\section{Structure of the Poincar\'e algebra}


The Poincar\'e group is the inhomogeneous Lorentz group, that is, the group generated by the Lorentz group plus translations.
Similarly the Poincar\'e algebra $\p$ over the field $\K$ is the inhomogeneous Lorentz algebra. We can define it
as the direct sum $\p= \begin{pmatrix}0 & \K^4\cr 0 & \L_\K\end{pmatrix}$ but since
in this section we assume the ground field $\K$ to be algebraically closed and of characteristic other than $2$,
we identify $\L_\K$ with $\o_4(\K)$ allthrough the section (see Lemma \ref{one}). 
Thus, the Poincar\'e algebra $\p$ is the direct sum 
$$\p=\begin{pmatrix}0 & \K^4\cr 0 & \o_4(\K)\end{pmatrix},$$
with the product in this Lie algebra  $\left[\!\begin{pmatrix}0 & v\cr 0 & M\end{pmatrix}\!,\!
\begin{pmatrix}0 & v'\cr 0 & M'\end{pmatrix}\!\right]\!=\!\begin{pmatrix}0 & vM'-v'M\cr 0 & [M,M']\end{pmatrix}$
for any $v,v'\in\K^4$, $M,M'\in\o_4(\K)$. Moreover $ \r=\begin{pmatrix}0 & \K^4\cr 0 & 0\end{pmatrix}$ is an abelian ideal (in fact the radical of $\p$).
The first result we need to prove is:
\begin{lm}
 The ideal $\r$ is minimal.
\end{lm}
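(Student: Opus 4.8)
The plan is to reduce the minimality of $\r$ to the irreducibility of the natural $4$-dimensional $\o_4(\K)$-module, and then to verify that irreducibility by a short explicit computation.

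First I would record how $\p$ acts on $\r$. Writing a general element of $\p$ as $\xi=\begin{pmatrix}0 & v\\ 0 & M\end{pmatrix}$ and an element of $\r$ as $\eta=\begin{pmatrix}0 & w\\ 0 & 0\end{pmatrix}$, the defining bracket gives $[\xi,\eta]=\begin{pmatrix}0 & -wM\\ 0 & 0\end{pmatrix}$. Hence, identifying $\r$ with $\K^4$ via $\begin{pmatrix}0 & w\\ 0 & 0\end{pmatrix}\mapsto w$, a subspace $J\subseteq\r$ is an ideal of $\p$ if and only if it is stable under $w\mapsto wM$ for every $M\in\o_4(\K)$; that is, if and only if $J$ is an $\o_4(\K)$-submodule of $\K^4$ for the natural action. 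So $\r$ is a minimal ideal of $\p$ precisely when the natural module $\K^4$ is an irreducible $\o_4(\K)$-module, and this is what I would establish.

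For the irreducibility, fix $0\ne w=(w_1,w_2,w_3,w_4)\in\K^4$, say with $w_i\ne 0$, and let $W$ be the $\o_4(\K)$-submodule generated by $w$. Recall $\o_4(\K)=\sum_{k<l}\K\,a_{kl}$ with $a_{kl}=e_{kl}-e_{lk}$, and note the elementary identity $w(e_{kl}-e_{lk})=w_k\epsilon_l-w_l\epsilon_k$, where $\epsilon_1,\dots,\epsilon_4$ denotes the standard basis of $\K^4$. Thus $w_i\epsilon_j-w_j\epsilon_i\in W$ for every $j\ne i$. Now apply the element of $\o_4(\K)$ attached to a pair $\{j,k\}$ with $k\notin\{i,j\}$ (this is where the size $4\ge 3$ is used); since $\epsilon_j(e_{jk}-e_{kj})=\epsilon_k$ and $\epsilon_i(e_{jk}-e_{kj})=0$, we get $w_i\epsilon_k\in W$, hence $\epsilon_k\in W$. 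Letting $j$ range over $\{1,2,3,4\}\setminus\{i\}$ this gives $\epsilon_k\in W$ for all $k\ne i$, and one further application recovers $\epsilon_i$. Therefore $W=\K^4$; since $w$ was an arbitrary nonzero vector, $\K^4$ is irreducible, and by the reduction above $\r$ is minimal.

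The computation is entirely routine, so I do not expect any genuine obstacle; the only point requiring a little care is index and sign bookkeeping (one lists the basis $a_{kl}$ with $k<l$, while the module action only changes by a sign if the indices are taken in the opposite order). Alternatively, one could invoke $\o_4(\K)\cong\sl_2(\K)\oplus\sl_2(\K)$ (Lemma \ref{one} together with Proposition \ref{prtwo}), under which $\K^4$ becomes the outer tensor product of the two $2$-dimensional natural modules and is irreducible for that reason; but the direct argument above is shorter and self-contained.
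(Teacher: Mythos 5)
Your proof is correct, but it follows a genuinely different route from the paper's. You reduce minimality of $\r$ to irreducibility of $\K^4$ as the natural $\o_4(\K)$-module (the identification of sub-ideals of $\p$ inside $\r$ with submodules is right, since $\r$ is abelian and the $\K^4$-part of $\p$ acts trivially on $\r$), and then you prove irreducibility by a direct computation with the basis elements $e_{kl}-e_{lk}$: from $w$ you extract $w_i\epsilon_j-w_j\epsilon_i$, then $w_i\epsilon_k$ for $k\notin\{i,j\}$, and finally all of $\K^4$. The paper instead argues group-theoretically: it lets the orthogonal group $\O_4(\K)$ act, notes that vectors in the same $\O_4(\K)$-orbit generate conjugate ideals, and uses Witt's theorem to split into the nonisotropic case (reduce to $v=(1,0,0,0)$) and the isotropic case (reduce to $v=(1,\iu,0,0)$, then observe $v\,\o_4(\K)$ is $3$-dimensional and so contains a nonisotropic vector). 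Your argument buys elementarity and generality: it uses no Witt's theorem, no square roots, no hypotheses on $\K$ beyond what makes the statement meaningful, and in fact shows the natural module of $\o_n$ (as defined integrally here) is irreducible for any $n\ge 3$ over any field; your sign bookkeeping ($[\xi,\eta]=-wM$ on the vector part, $\epsilon_j(e_{jk}-e_{kj})=\epsilon_k$, $\epsilon_i(e_{jk}-e_{kj})=0$) checks out. The paper's argument buys geometric transparency and fits its recurring theme of exploiting the $\O_4(\K)$-action and the quadratic form, at the cost of invoking Witt's theorem and the standing hypotheses (algebraically closed, characteristic not $2$). Your closing remark about the alternative via $\o_4(\K)\cong\sl_2(\K)\oplus\sl_2(\K)$ and the outer tensor product of the two natural $2$-dimensional modules is also a valid third route, closer in spirit to the structure theory developed earlier in the paper.
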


\begin{proof}
The group $\O_4(\K)$ of orthogonal matrices acts by conjugation on its Lie algebra $\o_4(\K)$ of skew-symmetric matrices. Also $\O_4(\K)$ acts naturally on $\K^4$ and if two vectors
$v_1,v_2\in\K^4$ are in the same orbit under the action of $\O_4(\K)$ (so $v_2=v_1P$ for some $P\in\O_4(\K)$), then
the ideals of $\p$ generated by $ \begin{pmatrix} 0 & v_i\\0 & 0\end{pmatrix}$, ($i=1.2$) are conjugated by the automorphism $\Omega\in\aut(\p)$ given by 
$$\Omega\begin{pmatrix} 0 & v\\0 & M\end{pmatrix}:=
\begin{pmatrix} 0 & vP\\0 & P^{-1}MP\end{pmatrix}.$$
Consequently, when studying the ideal generated by an element $ x={ \begin{pmatrix}0 & v\cr 0 & 0\end{pmatrix}}$ up to isomorphism, we may replace $v$ with any vector in its orbit under the action of $\O_4(\K)$. To prove that $\r$ is minimal, 
we show that the ideal generated by any nonzero element in $\r$ is $\r$. Thus, 
take 
$ x={ \begin{pmatrix}0 & v\cr 0 & 0\end{pmatrix}}\in\r\setminus\{0\}$.
There are two possibilities to analyze:
\begin{enumerate}
\item Assume first that $v \in \K^4$ is nonisotropic relative to the quadratic form $q(x,y,z,t):=x^2+y^2+z^2+t^2$ of $\K^4$. We may  take $q(v)=1$ since
the ideal generated by $x$ is the same that the ideal generated by any nonzero scalar multiple of $x$. In this case, applying Witt's Theorem, $v$ is in the same orbit as $(1,0,0,0)$ under the action of the orthogonal group. Hence
without loss in generality we may take $v$ to be $(1,0,0,0)$. But then, it is easy to prove that the ideal generated by $x$ is the radical $\r$. 
\item Let us assume now that $v$ is isotropic. Again by Witt's theorem, any two isotropic vectors are in the same orbit under the action of $\O_4(\K)$. So we may take $v$ to be $v=(1,i,0,0)$. In this case, the ideal generated by
$x$ contains all the elements $ \begin{pmatrix}0 & v M\cr 0 & 0\end{pmatrix}$
with $M\in\o_4(\K)$. But the subspace $v M$ is $3$-dimensional hence it contains a nonisotropic vector (the Witt index of $q$ is $2$). Thus the ideal generated by $x$ contains an element $ \begin{pmatrix}0 & w \cr 0 & 0\end{pmatrix}$ with $w$ nonisotropic. Hence this ideal is $\r$.
\end{enumerate}
\end{proof}
 
 We know that $\p/\r\cong\o_4(\K)$ and by Lemma \ref{one} and Proposition \ref{prtwo}, this algebra is a direct sum $\o_4(\K)=I\oplus J$ of two isomorphic ideals $I\cong J\cong\sl_2(\K)$.
 Thus we have seven-dimensional ideals $$\im=\begin{pmatrix}0 & \K^4\cr 0 & I\end{pmatrix},\quad \j=\begin{pmatrix}0 & \K^4\cr 0 & J\end{pmatrix},$$
such that $\im+\j=\p$ and $\im\cap\j=\r$. Furthermore, $\p/\im\cong \sl_2(\K)\cong \p/\j$ so $\im$ and $\j$ are maximal ideals of $\p$.
We may represent this by
\[
\xygraph{
!{<0cm,0cm>;<1cm,0cm>:<0cm,1cm>::}
!{(0,0)}*+{\p}="p"
!{(-.7,-.7)}*+{\im}="i"
!{(.7,-.7)}*+{\j}="j"
!{(0,-1.4)}*+{\r}="r"
"r"-"i"
"r"-"j"
"i"-"p"
"j"-"p"
}
\]
\begin{lm}\label{zur}
 If $K$ is an ideal of $\p$ such that $[K,\p]\subset\r$, then $K=0$ or $K=\r$.
\end{lm}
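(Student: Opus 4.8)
The plan is to reduce the statement to the minimality of $\r$, which was just established, by working in the quotient $\p/\r$. Recall that under the standing hypotheses ($\K$ algebraically closed, $\mathrm{char}\,\K\ne 2$) we have $\p/\r\cong\o_4(\K)\cong\sl_2(\K)\oplus\sl_2(\K)$ by Lemma~\ref{one} and Proposition~\ref{prtwo}. So let $\pi\colon\p\to\p/\r$ be the canonical projection and suppose $K\triangleleft\p$ satisfies $[K,\p]\subset\r$. Put $\bar K:=\pi(K)$, which is an ideal of $\p/\r$. Since $\pi$ is a Lie algebra homomorphism, $[\bar K,\p/\r]=\pi([K,\p])\subset\pi(\r)=0$, so $\bar K$ lies in the centre of $\o_4(\K)$.

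The next step is to note that the centre of $\o_4(\K)$ is zero. As $\o_4(\K)\cong\sl_2(\K)\oplus\sl_2(\K)$, it suffices to observe that $\sl_2(\K)$ has trivial centre when $\tfrac12\in\K$: if $ah+be+cf$ commutes with $h$ then $-2be+2cf=0$ forces $b=c=0$, and then commuting with $e$ gives $2ae=0$, so $a=0$. Hence $\bar K=0$, i.e.\ $K\subset\r$.

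Finally, $K$ is an ideal of $\p$ contained in $\r$, and $\r$ is minimal by the preceding lemma; therefore $K=0$ or $K=\r$, which is what we wanted.

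I do not anticipate a genuine obstacle here: the argument is essentially formal once one has the isomorphism $\p/\r\cong\o_4(\K)$ and the minimality of $\r$ in hand. The only point meriting a line of care is the (elementary) vanishing of the centre of $\sl_2(\K)$, which is exactly where the hypothesis $\mathrm{char}\,\K\ne2$ enters; everything else is bookkeeping with the projection $\pi$.
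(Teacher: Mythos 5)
Your proof is correct and follows essentially the same route as the paper: the condition $[K,\p]\subset\r$ forces the $\o_4(\K)$-component of $K$ (equivalently, its image in $\p/\r$) into the centre of $\o_4(\K)$, which is trivial, so $K\subset\r$, and then minimality of $\r$ finishes the argument. The only difference is cosmetic — the paper computes with block matrices instead of passing to the quotient — and your explicit verification that $\sl_2(\K)$ is centerless (where $\operatorname{char}\K\ne 2$ is used) is a point the paper leaves implicit.
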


\begin{proof}
 If $K\ne 0$ take a nonzero element $x_0=\begin{pmatrix}0 & v_0\cr 0 & M_0\end{pmatrix}\in K$. Then for any
element $x=\begin{pmatrix}0 & v\cr 0 & M\end{pmatrix}\in\p$ we have $[x_0,x]=
\begin{pmatrix}0 & v_0M-vM_0\cr 0 & [M_0,M]\end{pmatrix}\in \r$ hence $M_0$ is in the center of $\o_4(\K)$  which is null.
Thus $x_0\in\r$ and we have proved $K\subset\r$. Finally the minimality of $\r$ implies $K=\r$.
\end{proof}

\begin{lm}
 The unique ideals of $\p$ are: $0$, $\r$, $\im$, $\j$ and $\p$.
\end{lm}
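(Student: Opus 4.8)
The plan is to work up the chain $0\subset\r\subset\im,\j\subset\p$, using two facts already at hand: the minimality of $\r$ established just above, and the identification $\p/\r\cong\o_4(\K)=I\oplus J$ with $I\cong J\cong\sl_2(\K)$ coming from Lemma \ref{one} and Proposition \ref{prtwo}. Since $\K$ is algebraically closed of characteristic other than $2$, the summands $I$ and $J$ are simple (Lemma \ref{lematres}) and commute, so the only ideals of $\o_4(\K)$ are $0$, $I$, $J$ and $\o_4(\K)$ itself (this is also a special case of Lemma \ref{stinkone}). Write $\pi\colon\p\to\p/\r$ for the projection; then $\pi^{-1}(I)=\im$ and $\pi^{-1}(J)=\j$.

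Given an ideal $K\triangleleft\p$, I would distinguish cases according to how $K$ sits with respect to $\r$. If $K\subseteq\r$, then $K$ is an ideal of $\p$ lying inside the minimal ideal $\r$, so $K=0$ or $K=\r$. If $\r\subseteq K$, then $K/\r$ is an ideal of $\p/\r\cong\o_4(\K)$, hence equals $0$, $I$, $J$ or $\o_4(\K)$, and pulling back along $\pi$ gives $K\in\{\r,\im,\j,\p\}$. The only remaining possibility is $K\ne 0$ with $K\not\subseteq\r$ and $\r\not\subseteq K$, and the claim is that this cannot occur.

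I expect this last case to be the crux of the argument. If it held, then $K\cap\r$ would be an ideal of $\p$ strictly smaller than $\r$, forcing $K\cap\r=0$ by minimality, and hence $[K,\r]\subseteq K\cap\r=0$. On the other hand $\pi(K)$ is a nonzero ideal of $\o_4(\K)$, so it contains some $\bar M\ne 0$; choosing $v\in\K^4$ with $x:=\begin{pmatrix}0 & v\cr 0 & M\end{pmatrix}\in K$ we would get, for every $w\in\K^4$,
\[
\left[x,\begin{pmatrix}0 & w\cr 0 & 0\end{pmatrix}\right]=\begin{pmatrix}0 & -wM\cr 0 & 0\end{pmatrix}=0,
\]
so $wM=0$ for all row vectors $w$, i.e. $M=0$, a contradiction. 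This single computation — which records that the defining action of $\o_4(\K)$ on $\r\cong\K^4$ is faithful — is the whole substantive content; everything else is the correspondence theorem together with the minimality of $\r$. We conclude that the only ideals of $\p$ are $0$, $\r$, $\im$, $\j$ and $\p$.
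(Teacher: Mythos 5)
Your proposal is correct, and it reaches the conclusion by a somewhat cleaner route than the paper does, while sharing the decisive computation. Both arguments split according to the position of $K$ relative to $\r$, and in the case $K\cap\r=0$ both use exactly the same faithfulness computation: a bracket of an element of $K$ having nonzero $\o_4(\K)$-component against the translations $\begin{pmatrix}0 & w\cr 0 & 0\end{pmatrix}$ lands in $K\cap\r=0$, forcing $wM=0$ for all $w$ and hence $M=0$ (your derivation of $K\cap\r=0$ from minimality, and of $[K,\r]\subseteq K\cap\r$ from $K,\r\triangleleft\p$, is sound). Where you diverge is in the case $\r\subseteq K$: you invoke the correspondence theorem together with the fact that the only ideals of $\p/\r\cong\o_4(\K)\cong\sl_2(\K)\oplus\sl_2(\K)$ are $0$, $I$, $J$ and the whole algebra (justified, as you note, by Lemma \ref{lematres} or as the field case of Lemma \ref{stinkone}), so that $K\in\{\r,\im,\j,\p\}$ immediately. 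The paper instead runs a four-case analysis based on the maximality of $\im$ and $\j$ and the minimality of $\r$, and disposes of the residual configuration $K+\im=K+\j=\p$, $K\cap\im=K\cap\j=\r$ by showing $[K,\p]\subseteq\r$ and appealing to Lemma \ref{zur}. Your version buys a shorter and more transparent reduction (no need for Lemma \ref{zur} or the maximality bookkeeping), at the price of explicitly using the ideal classification of $\sl_2(\K)^2$; the paper's version stays closer to the minimal/maximal lattice picture it has just set up. Either way the substantive input is the same: simplicity of the two $\sl_2(\K)$ summands upstairs and the faithful action of $\o_4(\K)$ on $\r\cong\K^4$ downstairs.
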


\begin{proof}
Let $K\triangleleft\p$ such that $K\not\in\{0,\r,\im,\j,\p\}$. 
Observe that by maximality of $\im$ we must have either $K\subsetneq\im$ or $K+\im=\p$ (and similarly $K\subsetneq\j$ or $K+\j=\p$).
Thus we have four possibilities:
\begin{enumerate}
 \item $K\subsetneq\im$, $K\subsetneq\j$.
 \item $K\subsetneq\im$, $K+\j=\p$.
 \item $K+\im=\p$, $K\subsetneq\j$.
 \item $K+\im=\p$, $K+\j=\p$.
\end{enumerate}

Since $\r$ is minimal $K\cap\r=\r$ or $K\cap\r=0$. In the first case $\r\subset K$,
and $\r\subset K\cap \im\subset \im$ implying $K\cap\im=\r$ or $K\cap\im=\im$. Similarly $K\cap\j=\r$ or $K\cap\j=\j$ 
\begin{itemize}
 \item If $K\cap\im=\im$, then $\im\subset K$ and by maximality of $\im$ we have $K=\im$ or $K=\p$, a contradiction.
 \item If $K\cap\j=\j$, then we get as above a contradiction.
 \item If $K\cap\im=\r=K\cap\j$, we analize the compatibility of this conditions with (1)-(4). In case that (1) or (2) holds, we have
 $\r\subset K\subsetneq \im$ which implies $K=\r$ a contradiction.
 If (3) holds, then $\r\subset K\subsetneq\j$ implying $K=\r$ a contradiction again. So the situation now is: $K+\im=K+\j=\p$ and $K\cap\im=K\cap\j=\r$.
 But then $[K,\p]\subset[K,\im]+[K,\j]\subset (K\cap\im)+(K\cap\j)\subset\r$ and by Lemma \ref{zur} we get $K=0$ or $K=\r$ a contradiction again.
\end{itemize}
Now we must analyze the case $K\cap \r=0$. If $K\ne 0$, take a nonzero element $x_0=\begin{pmatrix}0 & v_0\cr 0 & M_0\end{pmatrix}\in K$.
Consider now any $v\in K^4$ and  $x=\begin{pmatrix}0 & v\cr 0 & M_0\end{pmatrix}$. Then 
$[x_0,x]= \begin{pmatrix}0 & (v_0-v)M_0\cr 0 & 0\end{pmatrix}\in K\cap\r=0$. So $(v_0-v)M_0=0$ for any $v$ which implies $M_0=0$, a contradiction.
\end{proof}

\begin{co}
 The Poincar\'e algebra $\p$ is centerless. 
\end{co}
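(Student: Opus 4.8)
The plan is to read off the answer from the ideal classification just established. The center $\z(\p)$ is in particular an ideal of $\p$, so by the preceding lemma it must be one of $0$, $\r$, $\im$, $\j$ or $\p$; the whole argument consists in eliminating the four nonzero candidates.

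First I would dispose of $\im$, $\j$ and $\p$ using the general fact that a central ideal $K$ satisfies $[K,K]\subset[K,\p]=0$ and is therefore abelian. The ideal $\im=\begin{pmatrix}0 & \K^4\cr 0 & I\end{pmatrix}$ contains the subalgebra $\begin{pmatrix}0 & 0\cr 0 & I\end{pmatrix}\cong I\cong\sl_2(\K)$, which is perfect; hence $[\im,\im]\ne 0$ and $\im$ is not abelian, so $\z(\p)\ne\im$. The same reasoning applies verbatim to $\j$, and to $\p$ itself (which also contains $I$), ruling those out as well.

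It remains to rule out $\z(\p)=\r$. Although $\r$ is abelian -- from the product formula $[\r,\r]=0$ -- it is not central. Indeed, take $x=\begin{pmatrix}0 & v\cr 0 & 0\end{pmatrix}$ with $v\ne 0$, and pick $M\in\o_4(\K)$ with $vM\ne 0$: this is possible because the $\K$-linear map $\o_4(\K)\to\K^4$, $M\mapsto vM$, is nonzero whenever $v\ne 0$ (if $v$ has nonzero $i$-th coordinate, apply it to $e_{ij}-e_{ji}$ for some $j\ne i$). Then $\left[x,\begin{pmatrix}0 & 0\cr 0 & M\end{pmatrix}\right]=\begin{pmatrix}0 & vM\cr 0 & 0\end{pmatrix}\ne 0$, so $\r\not\subset\z(\p)$, and a fortiori $\z(\p)\ne\r$. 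Combining the two steps, $\z(\p)=0$.

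I do not anticipate any genuine obstacle here: the statement is a routine corollary of the ideal list, and the only point deserving a line of care is the elementary observation that $M\mapsto vM$ is a nonzero map on $\o_4(\K)$ for each nonzero $v$.
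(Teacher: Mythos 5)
Your proof is correct and is essentially the argument the paper intends: the corollary is stated as an immediate consequence of the lemma classifying the ideals of $\p$, and your elimination of $\im$, $\j$, $\p$ (non-abelian, as they contain a copy of $\sl_2(\K)$) and of $\r$ (not central, since $vM\ne 0$ for some $M\in\o_4(\K)$ whenever $v\ne 0$) is exactly the routine verification being left to the reader.
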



\section{Derivation algebra of $\p$}

We analyze in this section the Lie algebra of derivation $\der(\p)$ of the Poincar\'e algebra. The ground field $\K$ is supposed to be
algebraically closed and of characteristic other than $2$. First of all since $\z(\p)=0$ the map $\ad\colon\p\to\der(\p)$ is a monomorphism
and the ideal of inner derivations is $10$-dimensional. We have a linear map $q\colon\der(\p)\to\der(\o_4(\K))$ defined by 
$q(d)=\pi d i$ where $i\colon\o_4(\K)\to\p$ is the natural injection $M\mapsto \begin{pmatrix}0 & 0\cr 0 & M\end{pmatrix}$,
and $\pi\colon\p\to\o_4(\K)$ the map $\begin{pmatrix}0 & v\cr 0 & M\end{pmatrix}\mapsto M$ which is an epimorphism of Lie algebras.
\begin{lm}
 The linear map $q$ is an epimorphism of Lie algebras whose  kernel is isomorphic to the Lie algebra $\K\times\K^4$ with product given
 by $[(\lambda,v_0),(\lambda',v_0')]:=(0,\lambda v_0'-\lambda' v_0)$. Thus $\der(\p)$ fits in a short exact sequence
 $$0\to\K\times\K^4\to\der(\p)\buildrel{q}\over\to\der(\o\nolimits_4(\K))\to 0$$
 and therefore $\dim\der(\p)=11$.
 
\end{lm}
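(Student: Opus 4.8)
The plan is to exhibit $q$ as the quotient map of a short exact sequence $0\to\K\times\K^4\to\der(\p)\to\der(\o_4(\K))\to 0$ and then read off the dimension. It is convenient to write an element of $\p$ as a pair $(v,M)$ with $v\in\K^4$ and $M\in\o_4(\K)$, so that $[(v,M),(v',M')]=(vM'-v'M,[M,M'])$, $\r=\{(v,0):v\in\K^4\}$, $i(M)=(0,M)$ and $\pi(v,M)=M$. The one delicate point is that $\r$ is invariant under every $d\in\der(\p)$; once this is in hand, the rest is bookkeeping together with Lemmas~\ref{etiq} and~\ref{carras}.

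Since $\r=\rad(\p)$, the invariance of $\r$ is the standard fact that the radical is a characteristic ideal — over a field of characteristic zero nothing more is needed — and in the present generality one argues directly. From the Leibniz rule and $[\p,\r]\subseteq\r$ one checks in one line that $\r+d(\r)$ is again an ideal of $\p$, hence by the classification of the ideals of $\p$ it equals $\r$, $\im$, $\j$ or $\p$; the bound $\dim(\r+d(\r))\le 8<10$ excludes $\p$. If $\r+d(\r)=\im$ then $d(\r)\subseteq\im$ and the $\o_4(\K)$-component of $d|_{\r}$ is a surjective linear map $\psi\colon\r\cong\K^4\to\im/\r\cong\sl_2(\K)\subseteq\o_4(\K)$; applying the Leibniz identity to two elements of the abelian ideal $\r$ gives $u\,\psi(w)=w\,\psi(u)$ for all $u,w\in\K^4$. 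Consequently the trilinear form $T(u,w,u'):=\langle u\,\psi(w),u'\rangle$, with $\langle\ ,\ \rangle$ the standard symmetric form preserved by $\o_4(\K)$, is symmetric under $u\leftrightarrow w$ (by the displayed relation) and skew under $u\leftrightarrow u'$ (because $\psi(w)$ is skew). The transpositions $(1\,2)$ and $(1\,3)$ generate $S_3$ and are conjugate in it, so on the one-dimensional span $\K T$ the group $S_3$ acts through a character, which must take the same value on conjugate elements; as $(1\,2)$ fixes $T$ while $(1\,3)$ negates it, $T=-T$, hence $T=0$ because $2\ne 0$. Nondegeneracy of $\langle\ ,\ \rangle$ then forces $\psi=0$, a contradiction; the case $\r+d(\r)=\j$ is identical. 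Therefore $d(\r)\subseteq\r$.

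Granting this, $q(d)=\pi d i$ is a well-defined linear endomorphism of $\o_4(\K)$; using that $i$ is a Lie homomorphism and that $\pi$ kills the upper-right block one checks directly that $q(d)\in\der(\o_4(\K))$ and that $q([d_1,d_2])=[q(d_1),q(d_2)]$ — here $d_1(\r)\subseteq\r$ is precisely what lets one replace $\pi d_1 d_2 i$ by $\pi d_1 i\,\pi d_2 i$. For surjectivity, $q(\ad_\p(iM_0))=\ad_{\o_4(\K)}(M_0)$ for every $M_0\in\o_4(\K)$, and by Corollary~\ref{interior} every derivation of $\o_4(\K)\cong\sl_2(\K)^2$ is of this form, so $q$ is onto. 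If $q(d)=0$ then $d(i(\o_4(\K)))\subseteq\r$, which together with $d(\r)\subseteq\r$ gives $d(\p)\subseteq\r$; write $d(v,M)=(\phi_1(v)+\phi_2(M),0)$ with $\phi_1\in\operatorname{End}_\K(\K^4)$ and $\phi_2\colon\o_4(\K)\to\K^4$ linear. Substituting into the Leibniz rule and specializing: $v=v'=0$ gives $\phi_2([M,M'])=\phi_2(M)M'-\phi_2(M')M$, so Lemma~\ref{etiq} provides a unique $v_0\in\K^4$ with $\phi_2(M)=v_0M$; $M'=0$ gives $\phi_1(vM)=\phi_1(v)M$, so $\phi_1$ is $\o_4(\K)$-equivariant, and since $\K^4$ is an irreducible $\o_4(\K)$-module (under $\o_4(\K)\cong\sl_2(\K)^2$ it is the outer tensor product of the two standard $2$-dimensional modules, simple over the algebraically closed $\K$), Schur's lemma gives $\phi_1=\lambda\cdot\mathrm{id}$ for a unique $\lambda\in\K$; the remaining terms of the identity cancel for any choice of $(\lambda,v_0)$, so conversely $d_{\lambda,v_0}\colon(v,M)\mapsto(\lambda v+v_0M,0)$ is a derivation.

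Thus $\ker q=\{d_{\lambda,v_0}:(\lambda,v_0)\in\K\times\K^4\}$, and since $d_{\lambda,v_0}$ maps $\p$ into $\r$ and acts on $\r$ by the scalar $\lambda$, a one-line bracket computation gives $[d_{\lambda,v_0},d_{\lambda',v_0'}]=d_{0,\,\lambda v_0'-\lambda'v_0}$, which is exactly the asserted bracket on $\K\times\K^4$. Hence $0\to\K\times\K^4\to\der(\p)\xrightarrow{q}\der(\o_4(\K))\to 0$ is exact, and with $\dim\der(\o_4(\K))=\dim\sl_2(\K)^2=6$ (Lemma~\ref{carras}) and $\dim(\K\times\K^4)=5$ we conclude $\dim\der(\p)=11$. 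The main obstacle is the invariance of $\r$: it is the only place where the ideal lattice of $\p$ and the hypothesis $\operatorname{char}\K\ne 2$ are genuinely used beyond the two cited lemmas, everything else being routine bracket computation.
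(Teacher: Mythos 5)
Your proof is correct, and its overall skeleton is the same as the paper's: you realize $q=\pi\, d\, i$ as the quotient map of a short exact sequence, get surjectivity from the fact that every derivation of $\o_4(\K)\cong\sl_2(\K)^2$ is inner (Corollary \ref{interior}, via $q(\ad_\p(iM_0))=\ad(M_0)$, which is exactly the explicit automorphism-free form of the paper's lifted derivation), describe $\ker q$ through Lemma \ref{etiq} together with the fact that the centralizer of $\o_4(\K)$ in $\mathcal{M}_4(\K)$ is $\K\cdot 1_4$, compute the bracket $[d_{\lambda,v_0},d_{\lambda',v_0'}]=d_{0,\lambda v_0'-\lambda'v_0}$, and count $6+5=11$ using Lemma \ref{carras}. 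The one genuinely different step is the invariance $d(\r)\subseteq\r$: the paper simply invokes Harish-Chandra's theorem that derivations preserve the radical (the reference \cite{Radical}), whereas you prove it from scratch, observing that $\r+d(\r)$ is an ideal, ruling out $\p$ by dimension and $\im,\j$ by the trilinear form $T(u,w,u')=\langle u\,\psi(w),u'\rangle$, which is symmetric in $(u,w)$ by the Leibniz rule on the abelian ideal $\r$ and alternating in $(u,u')$ by skewness of $\psi(w)$, hence zero when $2\ne 0$. This buys something real: Harish-Chandra's result is a characteristic-zero theorem, while the lemma is stated over algebraically closed fields of characteristic other than $2$, so your self-contained argument (resting only on the paper's own classification of the ideals of $\p$) actually covers the prime-characteristic case cleanly; the cost is a couple of extra paragraphs, and your Schur-lemma justification of $\phi_1=\lambda\,\mathrm{id}$ likewise makes explicit what the paper asserts without proof. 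All the computations you display (the relation $u\psi(w)=w\psi(u)$, the verification that $d_{\lambda,v_0}$ is a derivation, the kernel bracket) check out.
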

\begin{proof}
The proof that $q$ is a Lie algebra homomorphism is easy if we take into account that the radical $\r$ of $\p$ is $d$-invariant for any derivation $d$ of $\p$ (see \cite{Radical}). Next we prove that $q$ is an epimorphism.
Let $\alpha\in\der(\o_4(\K)$, since the derivations of $\o_4(\K)$ are inner (see Corollary \ref{inner}), there is some $x_0\in\o_4(\K)$ such that $\alpha=\ad(x_0)$.
Then define $d\colon\p\to\p$ such that 
$$d\big[\begin{pmatrix}0 & v\cr 0 & x\end{pmatrix}\big]:=\begin{pmatrix}0 & -vx_0\cr 0 & \alpha(x)\end{pmatrix}.$$
It is easy to check that $d\in\der(\p)$ and $q(d)=\alpha$. In order to determine the kernel of $q$, assume that $q(d)=0$ for a derivation $d$.
Then $d\left[ \begin{pmatrix}0 & 0\cr 0 & x\end{pmatrix}\right]= \begin{pmatrix}0 & \beta(x)\cr 0 & 0\end{pmatrix}$
for some linear map $\beta\colon\o_4(\K)\to\K^4$ which must satisfy the hypothesis on Lemma \ref{etiq}. Therefore there is an element
$v_0\in\K^4$ such that $\beta(x)=v_0x$ for any $x\in\o_4(\K)$. Since any derivation preserves the radical $\r$ of $\p$ (again \cite{Radical}), we also must have
$d\left[\begin{pmatrix}0 & v\cr 0 & 0\end{pmatrix}\right]=\begin{pmatrix}0 & \alpha(v)\cr 0 & 0\end{pmatrix}$
for some linear map $\alpha\colon\K^4\to\K^4$. So there is a matrix $P\in{\mathcal M}_4(\K)$ such that $\alpha(v)=vP$ for any $v$.
Then $d\left[\begin{pmatrix}0 & v\cr 0 & x\end{pmatrix}\right]=\begin{pmatrix}0 & vP+v_0x\cr 0 & 0\end{pmatrix}$ and imposing
the condition that $d$ to be a derivation we get that $P$ must commute with any element in $\o_4(\K)$ but this implies that $P=\lambda 1_4$ where
$\lambda\in\K$ and $1_4$ denotes de identity matrix $4\times 4$. Summarizing:
$d\left[\begin{pmatrix}0 & v\cr 0 & x\end{pmatrix}\right]=\begin{pmatrix}0 & \lambda v+v_0x\cr 0 & 0\end{pmatrix}$
and the scalar $\lambda$ as well as the vector $v_0$ are uniquely determined by $d$. Thus the map
$\ker(q)\to\K\times\K^4$ such that $d\mapsto (\lambda,v_0)$ is a Lie algebras homomorphism where $\K\times\K^4$ is provided
with a Lie algebra structure whose 
product is $[(\lambda,v_0),(\lambda',v_0')]:=(0,\lambda v_0'-\lambda' v_0)$. Thus, the exact sequence of Lie algebras
\begin{equation}\label{ssesone}
0\to\K\times\K^4\to\der(\p)\buildrel{q}\over\to\der(\o\nolimits_4(\K))\to 0
\end{equation}
exists and $\dim\der(\p))=11$. 
\end{proof}

\begin{de}
 For $(\lambda,v_0)\in\K\times\K^4$ define the derivation $d_{\lambda,v_0}\in\ker(q)$ by 
 $$d_{\lambda,v_0}\left[\begin{pmatrix}0 & v\cr 0 & x\end{pmatrix}\right]=\begin{pmatrix}0 & \lambda v+v_0x\cr 0 & 0\end{pmatrix}.$$
\end{de}

\begin{pr}
 The short exact sequence (\ref{ssesone}) is split: there is a monomorphism $$j\colon\der(\o\nolimits_4(\K))\to\der(\p)$$ such that 
for $\alpha\in\der(\o_4(\K))$ one has $j(\alpha)\colon \begin{pmatrix}0 & v\cr 0 & x\end{pmatrix}\mapsto \begin{pmatrix}0 & -vx_0\cr 0 & \alpha(x)\end{pmatrix}$
being $\alpha=\ad(x_0)$. This map satisfies $q j=1$. Consequently any derivation $d\in\der(\p)$ can be uniquely written as a sum $j(\alpha)+d_{\lambda,v}$.
Furthermore:
\begin{enumerate}
  \item $\K\times\K^4\cong\{d_{\lambda,v}\colon (\lambda,v)\in\K\times\K^4\}$ is the radical $\r:=\r(\der(\p))$.
  \item $\der(\p)=\r\oplus j(\der(\o_4(\K)))\cong\r\oplus\sl_2(\K)^2$.
\end{enumerate}
\end{pr}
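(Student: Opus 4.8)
The plan is to establish in turn that $j$ is a well-defined Lie-algebra monomorphism splitting $q$, that $\ker q$ is exactly the radical of $\der(\p)$, and that the resulting decomposition is the asserted one. First I would check that $j$ is well defined. Since $\K$ has characteristic other than $2$, the algebra $\o_4(\K)\cong\sl_2(\K)^2$ is centerless, so by Corollary~\ref{inner} the adjoint map $\ad\colon\o_4(\K)\to\der(\o_4(\K))$ is an isomorphism of Lie algebras; in particular every $\alpha\in\der(\o_4(\K))$ equals $\ad(x_0)$ for a \emph{unique} $x_0\in\o_4(\K)$, so the formula defining $j(\alpha)$ is unambiguous. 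That $j(\alpha)$ is a derivation of $\p$ and that $q(j(\alpha))=\alpha$ was already verified in the proof of the preceding lemma: the map $j(\alpha)$ is precisely the derivation $d$ exhibited there in showing that $q$ is onto. Hence $qj=\mathrm{id}$, and in particular $j$ is injective.

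Next I would show $j$ is a homomorphism of Lie algebras by a direct computation on a generic element $\begin{pmatrix}0 & v\cr 0 & x\end{pmatrix}$. Writing $\alpha=\ad(x_0)$ and $\alpha'=\ad(x_0')$, one composes $j(\alpha)$ with $j(\alpha')$ and antisymmetrizes; the upper slot yields $v(x_0'x_0-x_0x_0')=-v[x_0,x_0']$ and the lower slot yields $[x_0,[x_0',x]]-[x_0',[x_0,x]]=[[x_0,x_0'],x]$ by the Jacobi identity, so $[j(\alpha),j(\alpha')]=j(\ad[x_0,x_0'])$. Since $\ad$ is a Lie-algebra homomorphism, $\ad[x_0,x_0']=[\ad x_0,\ad x_0']=[\alpha,\alpha']$, hence $[j(\alpha),j(\alpha')]=j([\alpha,\alpha'])$. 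Thus $j$ is a monomorphism of Lie algebras, and together with $qj=\mathrm{id}$ it splits the exact sequence~(\ref{ssesone}). Consequently, for any $d\in\der(\p)$ one has $d-j(q(d))\in\ker q$, so $d=d_{\lambda,v}+j(\alpha)$ with $\alpha=q(d)$, and this decomposition is unique because $\ker q\cap j(\der(\o_4(\K)))=0$: if $j(\alpha)\in\ker q$ then $\alpha=q(j(\alpha))=0$.

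Finally I would identify the radical and assemble~(1) and~(2). By the preceding lemma, $\ker q\cong\K\times\K^4$ with bracket $[(\lambda,v_0),(\lambda',v_0')]=(0,\lambda v_0'-\lambda' v_0)$, whose derived subalgebra is contained in the abelian ideal $0\times\K^4$; hence $\ker q$ is a solvable ideal of $\der(\p)$ and $\ker q\subseteq\r(\der(\p))$. Conversely $\der(\p)/\ker q\cong\der(\o_4(\K))\cong\sl_2(\K)^2$ by Corollary~\ref{carras}, which is semisimple (in characteristic other than $2$, $\sl_2(\K)$ is simple), so it has no nonzero solvable ideal; therefore the solvable ideal $q(\r(\der(\p)))$ vanishes, giving $\r(\der(\p))\subseteq\ker q$. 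This proves $\r(\der(\p))=\ker q=\{d_{\lambda,v}:(\lambda,v)\in\K\times\K^4\}$, which is~(1). For~(2), the splitting exhibits $\der(\p)=\r\oplus j(\der(\o_4(\K)))$ as the direct sum of the ideal $\r$ and the subalgebra $j(\der(\o_4(\K)))$ (a semidirect product of Lie algebras), and $j$ restricts to an isomorphism $\der(\o_4(\K))\cong j(\der(\o_4(\K)))$; with $\der(\o_4(\K))\cong\sl_2(\K)^2$ this gives $\der(\p)\cong\r\oplus\sl_2(\K)^2$. The one step requiring genuine care is that $j$ respects brackets: this rests on the Jacobi-identity manipulation above together with the centerlessness of $\o_4(\K)$, which is what makes the assignment $\alpha\mapsto x_0$ unambiguous; everything else is bookkeeping once one knows $\der(\o_4(\K))$ is semisimple.
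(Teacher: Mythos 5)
Your proof is correct and takes essentially the same route as the paper's: split the sequence via the map $j$ coming from inner derivations of $\o_4(\K)$, then identify $\ker(q)$ with the radical by noting it is a solvable ideal while the quotient $\der(\p)/\ker(q)\cong\sl_2(\K)^2$ is semisimple. The only difference is that you spell out the verifications the paper declares straightforward (well-definedness of $j$ via centerlessness of $\o_4(\K)$, the Jacobi-identity computation showing $j$ preserves brackets, and both inclusions for the radical), which is sound.
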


\begin{proof} It is straightforward to check that $j$ is a monomorphism and that $qj=1$. 
So $\der(\p)=j(\der(\o\nolimits_4(\K)))\oplus \ker(q)$. To finish the proof, it only remains to check that
$\ker(q)\cong\K\times\K^4$ is the radical of $\der(\p)$. By the definition of the product of the Lie algebra $\K\times\K^4$ we see that it is
solvable and so $\ker(q)$ is a solvable ideal and $\der(\p)/\ker(q)\cong\sl_2(\K)^2$ is semisimple (take into account Lemmas \ref{one} and \ref{carras}). Thus, $\ker(q)$ is the radical of $\der(\p)$.
\end{proof}


\section{Automorphism group of $\p$}


The ground field $\K$ is supposed to be
algebraically closed and of characteristic other than $2$. 
Consider the group homomorphism $Q\colon\aut(\p)\to\aut(\o_4(\K))$ such that $Q(f)=\pi f i$ where $\pi$ and $i$ have been defined at the
beginning of the previous section. Given $\theta\in\aut(\o_4(\K))$ there is an $x_0\in\GL_4(\K)$ such that $\theta=\Ad(x_0)$.
The map $f\colon\p\to\p$ such that $
f\colon \begin{pmatrix}0 & v\cr 0 & x\end{pmatrix}\mapsto \begin{pmatrix}0 & vx_0^{-1}\cr 0 & \theta(x)\end{pmatrix}$
is an automorphism of $\p$ and $Q(f)=\theta$. Therefore $Q$ is an epimorphism whose  kernel is described in the following
\begin{lm}
The kernel $\ker(Q)$ is the subgroup of
automorphisms $f_{\lambda,v_0}$ where $\lambda\in\K^\times$ and $v_0\in\K^4$; such that $$f_{\lambda,v_0}\colon \begin{pmatrix}0 & v\cr 0 & x\end{pmatrix}\mapsto
\begin{pmatrix}0 & \lambda v+v_0x\cr 0 & x\end{pmatrix}.$$ 
\end{lm}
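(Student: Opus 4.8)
The plan is to prove the two inclusions separately. For the inclusion $\supseteq$, I would first verify by a direct bracket computation that each $f_{\lambda,v_0}$ is a Lie algebra endomorphism of $\p$: applying $f_{\lambda,v_0}$ to the bracket of $\begin{pmatrix}0 & v\\0 & M\end{pmatrix}$ and $\begin{pmatrix}0 & v'\\0 & M'\end{pmatrix}$ produces the vector component $\lambda(vM'-v'M)+v_0[M,M']$, while the bracket of the images has vector component $(\lambda v+v_0M)M'-(\lambda v'+v_0M')M$, and the two coincide because $v_0MM'-v_0M'M=v_0[M,M']$. Since $\lambda\in\K^\times$ the map is bijective, with inverse $f_{\lambda^{-1},-\lambda^{-1}v_0}$, so $f_{\lambda,v_0}\in\aut(\p)$; and $Q(f_{\lambda,v_0})=\pi f_{\lambda,v_0}i=1$ is immediate from the definition of $Q$. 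The composition law $f_{\lambda,v_0}f_{\mu,w_0}=f_{\lambda\mu,\,\lambda w_0+v_0}$ then shows that these automorphisms form a subgroup of $\ker(Q)$, in fact one isomorphic to $\K^\times\ltimes\K^4$.

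For the reverse inclusion, let $f\in\ker(Q)$. The radical $\r$ of $\p$ is invariant under every automorphism, so $f(\r)=\r$ and $f$ restricts on $\r\cong\K^4$ to a map $v\mapsto vP$ with $P\in\GL_4(\K)$. The condition $Q(f)=1$ means exactly that $f\begin{pmatrix}0 & 0\\0 & x\end{pmatrix}=\begin{pmatrix}0 & \beta(x)\\0 & x\end{pmatrix}$ for some linear map $\beta\colon\o_4(\K)\to\K^4$ (indeed $f\begin{pmatrix}0&0\\0&x\end{pmatrix}-\begin{pmatrix}0&0\\0&x\end{pmatrix}\in\ker\pi=\r$), hence $f\begin{pmatrix}0 & v\\0 & x\end{pmatrix}=\begin{pmatrix}0 & vP+\beta(x)\\0 & x\end{pmatrix}$. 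I would then impose that $f$ preserve the bracket of two arbitrary elements and compare vector components, obtaining
\[(vM'-v'M)P+\beta([M,M'])=vPM'+\beta(M)M'-v'PM-\beta(M')M\]
for all $v,v'\in\K^4$ and $M,M'\in\o_4(\K)$. Putting $M=0$ reduces this to $vM'P=vPM'$ for all $v$ and all $M'$, so $P$ commutes with every skew-symmetric matrix; since the centralizer of $\o_4(\K)$ in ${\mathcal M}_4(\K)$ consists only of scalar matrices, $P=\lambda 1_4$ with $\lambda\in\K^\times$. Substituting back, the $\lambda vM'$ and $\lambda v'M$ terms cancel and the identity becomes $\beta([M,M'])=\beta(M)M'-\beta(M')M$, which is precisely the hypothesis of Lemma \ref{etiq}. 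That lemma provides a unique $v_0\in\K^4$ with $\beta(x)=v_0x$, and therefore $f=f_{\lambda,v_0}$, completing the proof.

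The only mildly delicate part is the bracket bookkeeping in the displayed identity, together with the use of the fact that the ${\mathcal M}_4(\K)$-centralizer of $\o_4(\K)$ reduces to scalars; everything else is routine or is absorbed into Lemma \ref{etiq}. Note that it is precisely through that lemma that the standing hypothesis that $\K$ has characteristic other than $2$ enters.
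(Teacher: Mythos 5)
Your argument is correct and follows essentially the same route as the paper: use invariance of the radical $\r$ and the condition $\pi f i=1$ to write $f$ via linear maps $v\mapsto vP$ and $\beta\colon\o_4(\K)\to\K^4$, deduce from the bracket condition that $P$ centralizes $\o_4(\K)$ and is therefore a scalar $\lambda$ (nonzero by bijectivity on $\r$), and obtain $\beta(x)=v_0x$ from Lemma \ref{etiq}. Your explicit check of the easy inclusion, that each $f_{\lambda,v_0}$ is indeed an automorphism lying in $\ker(Q)$, is a small welcome addition that the paper leaves implicit.
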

\begin{proof} Let $f\in\aut(\p)$, since $\r$ is $f$-invariant, we have
$$f\left(\left(
\begin{array}{cc}
 0 & v \\
 0 & x
\end{array}
\right)\right)=\left(
\begin{array}{cc}
 0 & \mu (v)+\alpha (x) \\
 0 & \beta (x)
\end{array}
\right)$$ for some linear maps $\mu\colon\K^4\to\K^4$, 
$\alpha\colon\o_4(\K)\to\K^4$, $\beta\colon\o_4(\K)\to\o_4(\K)$. If we assume that $f\in\ker(Q)$ then $\pi f i=1$ and this implies $\beta=1$. Now imposing the condition that $f$ is a Lie algebra homomorphism, we get:
\begin{enumerate}
\item $\alpha([x,y])=\a(x)y-\a(y)x$ for any $x,y\in \o_4(\K)$.
\item $\mu(v x)=\mu(v) x$ for any $v\in\K^4$ and $x\in\o_4(\K)$.
\end{enumerate}
Applying Lemma \ref{etiq} to $\a$, there is some $v_0\in\K^4$ such that
$\a(x)=v x$ for any $x$. Also, since $\mu$ is linear there must be a matrix $P$ such that $\mu(v)=vP$ for any $v$. But imposing the condition in the second intem above we find that $P$ must be a scalar multiple  of the identity. So
$\mu(v)=\lambda v$ for any $v\in\K^4$. Finally the fact that $f$ is injective implies $\lambda\ne 0$.
\end{proof}

Thus $\ker(Q)\cong\K^\times\times\K^4$ with multiplication
$(\lambda,v_0)(\mu,w_0):=(\lambda\mu,\lambda w_0+v_0)$. Consequently $\aut(\p)$ fits in a exact sequence
\begin{equation}\label{ssestwo}
 1\to\K^\times\times\K^4\to\aut(\p)\buildrel{Q}\over\to\aut(\o\nolimits_4(\K))\to 1
 \end{equation}
which is also split: the map $J\colon\aut(\o_4(\K)) \to\aut(\p)$ such that $ J(\theta)\colon  
\begin{pmatrix}0 & v\cr 0 & x\end{pmatrix}\mapsto 
\begin{pmatrix}0 & vx_0^{-1}\cr 0 & \theta(x)\end{pmatrix}$
for any automorphism $\theta=\Ad(x_0)$ of $\o_4(\K)$, is a monomorphism and $QJ=1$.  As a corollary of this, we have $$\dim\aut(\p)=11$$
since $\dim\aut(\o_4(\K))=6$ (see  Lemma \ref{one} and formula (\ref{bmw})).

\begin{pr} The automorphism group $\aut(\p)$ agrees with the group
$$\aut(\p)=\{f_{\lambda,v_0,x_0}\colon \lambda\in\K^\times, v_0\in\K^4,\Ad(x_0)\in\aut(\o\nolimits_4(\K)\}$$ where
$$f_{\lambda,v_0,x_0}
\left[\begin{pmatrix} 0 & v \cr 0 &x\end{pmatrix}\right]:=
\begin{pmatrix} 0 & \lambda vx_0^{-1}+v_0x_0xx_0^{-1} \cr 0 & x_0xx_0^{-1}\end{pmatrix}.$$
Some multiplicative relations of these elements are: 
$$f_{\lambda,v_0,x_0} f_{\mu,v_1,x_1}=f_{\lambda\mu,\lambda v_1x_0^{-1}+v_0,x_0x_1},$$
$$f_{\lambda,v_0,x_0}^{-1}=f_{\lambda^{-1},-\lambda^{-1}v_0x_0,x_0^{-1}.}$$
\end{pr}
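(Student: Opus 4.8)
The plan is to read the statement off directly from the split short exact sequence (\ref{ssestwo}) together with the explicit descriptions of $\ker(Q)$ and of the section $J$ obtained just above. Recall that $Q\colon\aut(\p)\to\aut(\o_4(\K))$ is surjective with kernel $\{f_{\lambda,v_0}\colon\lambda\in\K^\times,\ v_0\in\K^4\}$, where $f_{\lambda,v_0}$ fixes the second coordinate and sends $v\mapsto\lambda v+v_0x$, and that $J$ is a homomorphic section carrying $\theta=\Ad(x_0)$ to the automorphism $\begin{pmatrix}0 & v\cr 0 & x\end{pmatrix}\mapsto\begin{pmatrix}0 & vx_0^{-1}\cr 0 & x_0xx_0^{-1}\end{pmatrix}$ (such an $x_0$ exists by the surjectivity of $\Ad$, Corollary \ref{zuru} and Lemma \ref{carras}).

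First I would observe that, since $QJ=1$, for every $f\in\aut(\p)$ the automorphism $f\circ J(Q(f))^{-1}$ lies in $\ker Q$, so $f=g\circ J(\theta)$ with $\theta:=Q(f)$ and $g:=f\circ J(\theta)^{-1}\in\ker Q$. Writing $g=f_{\lambda,v_0}$ and $\theta=\Ad(x_0)$, one computes the composite on a generic element of $\p$:
\[
(f_{\lambda,v_0}\circ J(\Ad(x_0)))\left[\begin{pmatrix}0 & v\cr 0 & x\end{pmatrix}\right]
=f_{\lambda,v_0}\left[\begin{pmatrix}0 & vx_0^{-1}\cr 0 & x_0xx_0^{-1}\end{pmatrix}\right]
=\begin{pmatrix}0 & \lambda vx_0^{-1}+v_0x_0xx_0^{-1}\cr 0 & x_0xx_0^{-1}\end{pmatrix},
\]
which is exactly $f_{\lambda,v_0,x_0}$. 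Conversely each $f_{\lambda,v_0,x_0}$ is plainly the composite of the automorphism $f_{\lambda,v_0}\in\ker Q$ with $J(\Ad(x_0))$, hence an automorphism of $\p$. This establishes the set equality $\aut(\p)=\{f_{\lambda,v_0,x_0}\}$. I would also point out that the triple $(\lambda,v_0,x_0)$ is not a faithful label for the map: since $\Ad$ factors through $\PGL_4(\K)$, one has $f_{\lambda,v_0,cx_0}=f_{\lambda c^{-1},v_0,x_0}$ for $c\in\K^\times$, so this redundancy is harmless.

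For the two multiplicative relations I would either carry out a direct matrix computation or invoke the semidirect-product structure $\aut(\p)\cong\ker Q\rtimes\aut(\o_4(\K))$ afforded by $J$. In the direct approach one substitutes the definition of $f_{\lambda,v_0,x_0}$ twice, uses $x_1^{-1}x_0^{-1}=(x_0x_1)^{-1}$, and notes $\lambda v_1x_1xx_1^{-1}x_0^{-1}+v_0x_0x_1xx_1^{-1}x_0^{-1}=(\lambda v_1x_0^{-1}+v_0)(x_0x_1)x(x_0x_1)^{-1}$, which gives $f_{\lambda,v_0,x_0}f_{\mu,v_1,x_1}=f_{\lambda\mu,\ \lambda v_1x_0^{-1}+v_0,\ x_0x_1}$; the inverse formula then drops out by solving $f_{\lambda,v_0,x_0}f_{\lambda^{-1},v_1,x_0^{-1}}=\mathrm{id}$ for $v_1$, yielding $v_1=-\lambda^{-1}v_0x_0$. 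In the structural approach one checks once that $\Ad(x_0)$ acts on $\ker Q\cong\K^\times\times\K^4$ by $(\lambda,v_0)\mapsto(\lambda,v_0x_0^{-1})$, whereupon the composition rule is just the general multiplication law in a semidirect product. Since all the needed ingredients are already available, there is no genuine obstacle here: the only point requiring care is the bookkeeping of the non-uniqueness of $x_0$ and keeping the noncommuting matrix products in the correct order.
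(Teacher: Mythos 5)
Your argument is correct and is essentially the paper's own proof: the paper likewise derives the description from the split exact sequence (\ref{ssestwo}), writing every automorphism as $f_{\lambda,v_0}J(\theta)$ and leaving the multiplicative relations as a direct check, which you simply carry out in more detail. Your extra remark on the scalar redundancy $f_{\lambda,v_0,cx_0}=f_{\lambda c^{-1},v_0,x_0}$ is accurate and harmless.
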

\begin{proof}
Since the exact sequence (\ref{ssestwo}) is split, any automorphism of $\p$
is of the form $f_{\lambda,v_0} J(\theta)$ and this gives the description of $\aut(\p)$ claimed in the statement of the Proposition. The multiplicative relations are straightforward to check.
\end{proof}
\section*{Acknowledgments}
All the  authors have been partially supported by the Spanish Ministerio de Econom\'{\i}a y Competitividad and Fondos FEDER, jointly, through project MTM2013-41208-P,  by the Junta de Andaluc\'{\i}a and Fondos FEDER, jointly, through projects FQM-336 and FQM-7156. 



\begin{thebibliography}{99}

\bibitem{Atiyah} M. F. Atiyah and  I. G. Macdonald. {\it Introduction to commutative algebra.} 
Addison-Wesley Publishing Co., Reading, Mass.-London-Don Mills, Ont. 1969.

\bibitem{Radical} Harisch-Chandra. {\em On the radical of a Lie algebra}. Proceedings of the American Mathematical Society
Vol. 1, No. 1 (1950), 14-17.
%
\bibitem{dzh} A. S. Dzhumadil’daev.
{\em On the cohomology of modular Lie algebras}, Math. USSR Sb. 47 (1984), 127–143.
%
\bibitem{boi} M-A. Knus, A. Merkurjev, M. Rost and J-P. Tignol. The Book of Involutions. American Mathematical Society. Colloquium Publications vol. 44, 1998.
%
\bibitem{Milne} J. S. Milne. {\it Basic Theory of Affine Group Schemes}. http://www.jmilne.org/math/ CourseNotes/AGS.pdf. 2012.

\bibitem{Spanier} E. H. Spanier. {\it Algebraic topology.} Corrected reprint. Springer-Verlag, New York-Berlin, 1981. 



 \bibitem{Waterhouse} W. C. Waterhouse. Introduction to affine group schemes. Springer-Verlag. 1979.
\end{thebibliography}
\end{document}